\numberwithin{equation}{section}
\newcommand{\norm}[1]{\left\lVert#1\right\rVert}
\newcommand*{\medcup}{\mathbin{\scalebox{1.5}{\ensuremath{\cup}}}}%
\newtheorem{thm}{Theorem}[section]
\newtheorem{cor}[thm]{Corollary}
\newtheorem{lemma}[thm]{Lemma}
\newtheorem{prop}[thm]{Proposition}
\theoremstyle{definition}
\newtheorem{defn}[thm]{Definition}
\newtheorem*{FROS}{Frostman's Lemma}
\newtheorem*{FALthm}{Falconer's Theorem}
\newtheorem{FSthm}[thm]{Fraser-Shmerkin's Theorem}
\theoremstyle{definition}
\newtheorem*{structure}{Structure and ideas of the article}
\newtheorem{rem}[thm]{Remark}
\numberwithin{equation}{section}
\begin{document}

\title{Dimensions of an overlapping generalization of Bara\'{n}ski carpets}

\author{Leticia Pardo Sim\'{o}n}

\date{\today}

\maketitle

\begin{abstract}
We determine the Hausdorff, packing and box-counting dimension of a family of self-affine sets generalizing Bara\'{n}ski carpets. More specifically, we fix a Bara\'{n}ski system and allow both vertical and horizontal random translations, while preserving the rows and columns structure. The alignment kept in the construction allows us to give expressions for these fractal dimensions outside of a small set of exceptional translations. Such formulas will coincide with those for the non-overlapping case, and thus provide examples where the box-counting and Hausdorff dimension do not necessarily agree. These results rely on M. Hochman's recent work on the dimensions of self-similar sets and measures, and can be seen as an extension of J. Fraser and P. Shmerkin results for Bedford-McMullen carpets with columns overlapping. 
\end{abstract}
 
\section{Introduction}
Frequently, we find that fractals are comprised of scaled-down copies of themselves, which permits them to be represented as attractors of iterated function systems. Recall that an \textit{iterated function system (IFS)} is a finite family $\lbrace S_{i} \rbrace_{i \in \mathcal{I}}$ of contractions defined on a closed subset $D \subseteq \mathbb{R}^n$, i.e. functions that satisfy
$\vert S_{i}(x)-S_{i}(y) \vert \leq c_{i} \vert x-y \vert $ for all $x,y \in D$ and some $c_{i}<1 $. Hutchinson \cite{HUCH} proved in 1981 that given an IFS, there exists a unique non-empty compact set $F$, called its \textit{attractor}, that satisfies
\begin{equation}\label{attractor} F = \bigcup_{i \in \mathcal{I}} S_{i} (F). \\
\end{equation}

When aiming to compute fractal dimensions, this representation turns out to be very convenient, and in fact the study of dimensions of attractors of IFSs has been a long standing problem. In particular, if all the contractions that form an IFS are similarities, that is, $\vert S_{i}(x)- S_{i}(y) \vert = c_{i} \vert x-y\vert $ for all $i \in \mathcal{I}$, the corresponding attractor is called a \textit{self-similar set}. More generally, if all maps are affine, i.e. consisting of a linear part and a translation vector, the associated attractors are known as \textit{self-affine sets}. This paper will study certain class of self-affine sets, but will make use of results on self-similar sets.\\

Given an IFS of similarities, we say that the \textit{open set condition} (OSC) holds if there exists a non-empty open set $U$ such that $\bigcup_{i \in \mathcal{I}} S_{i} (U) \subseteq U$ with this union disjoint, and thus guaranteeing that the union in (\ref{attractor}) is ``almost disjoint''. Under this separation condition, already back in 1946 P. Moran \cite{MORAN} presented a formula for computing the ``size'' of self-similar sets. The \textit{similarity dimension} is defined to be the unique solution $s$ to the equation \begin{equation} \label{sim}
\sum_{i \in \mathcal{I}} c_i^{s}=1, 
\end{equation}
and equals both the Hausdorff and box-counting dimension of the attractor of the system.\\

However, when the OSC is not satisfied, finding general expressions for the dimensions of self-similar sets becomes a trickier task. In $\mathbb{R}$, a ‘dimension drop’ can occur if the image of different iterates of some maps of the IFS overlap exactly, and it has been conjectured for a long time that this is the only way the dimension can drop, see for example \cite{PS}. Recently, an important step towards solving this conjecture has been made by Hochman \cite{Ho}, who confirms it in the case where the defining parameters of the IFS are algebraic. We will make use of this result in our proofs. When working in higher dimensions, the
conjecture above is false as stated, and a new version which pays attention to the case when the linear parts of the defining similarities act reducibly on
$\mathbb{R}^{d}$ is formulated in \cite{Ho2}. \\

Self-affine sets follow a more complex behaviour and consequently are not so well understood. To begin with, the Hausdorff dimension need not vary continuously with the parameters even when the OSC is satisfied, see \cite{Falconer, Lal, Przytycki1989}. Thus, the expectations of finding dimension formulas as treatable as (\ref{sim}) are lower. Nevertheless, a first general result for maps whose linear parts are nonsingular and contractive was due to Falconer in 1988. He introduced the so-called \textit{affinity dimension d}, given in terms of the singular values of these linear parts (for its definition see \cite[Section 4 and Theorem 5.3]{Falconer}). The main theorem is as follows: 
\begin{FALthm}\cite[Theorem 5.3]{Falconer}. Suppose that each of the linear maps $\lbrace A_{i} : i \in \mathcal{I}\rbrace$ satisfies $\norm{ A_{i}} < \frac{1}{3}$. Then for almost all $\underline{t}\in \mathbb{R}^{n\vert \mathcal{I} \vert }$ (in the sense of the $n$-dimensional Lebesgue measure) the attractor $F_{\underline{t}}$ of the IFS $\lbrace A_{i} + t_{i}\rbrace_{i \in \mathcal{I}}$ satisfies $\dim_{H} F_{\underline{t}}= \dim_{B} F_{\underline{t}} = \min \lbrace n, d \rbrace$.\\
\end{FALthm}

The condition on the norm of the maps was relaxed to $1/2$ by Solomyak \cite{PSP:37417}, who also noted that $1/2$ is sharp based on an example of Przytycki and Urba\'{n}ski \cite{Przytycki1989}. Note that Falconer's setting does not have any restriction with regard to alignments nor overlaps, but unfortunately, the proof of the theorem does not give any information as to which $\underline{t}$ the formula applies. This originated a line of research aiming to establish sufficient conditions for the validity of the theorem, as well as extending it; see for example\cite{Hueter95, ETS:5571624, Jordan2007, Shmerkin, 0951-7715-12-4-308}. Besides, it is a difficult problem to actually compute $d$  in most cases.\\ 

Thanks to the seminal work on specific cases by Bedford \cite{Bedford} and McMullen \cite{McMullen}, it was already known in 1984 that the equality on the dimensions stated in Falconer's Theorem does not hold for all parameters $\underline{t}$. The dynamical construction of their setting is as follows: they divided the unit square into a uniform grid of $m\times n$ equal rectangles for some fixed $n>m$ integers. This grid can be naturally labelled as $D_0=\{(i,j):1\leq i\leq m\text{ and }1\leq j\leq n\}$. Then they chose a subset $D \subset D_0$ and considered the IFS consisting on the affine transformations which map $[0, 1]^{2}$ onto each rectangle in $D$, preserving orientation; see Figure \ref{fig:setting}. The uniformity of the model allowed them to provide explicit formulae for the Hausdorff, packing and box-counting dimensions of the corresponding attractor $F$, namely
\begin{equation}\label{dims}
\dim_{H} F =\dfrac{\log \sum_{i \in  \overline{D}_{X} }N_{i}^{\frac{\log m}{\log n}}}{\log m} \quad \quad \dim_{B} F=\dim_{P} F=\frac{\log \vert \overline{D}_{X}\vert }{\log m}+\dfrac{\log (\vert D \vert /\vert \overline{D}_{X}\vert )}{\log n}, 
\end{equation}
where $\overline{D}_{X} = \lbrace i \in \lbrace 1, \ldots ,m\rbrace : (i,j) \in D \text{ for some } j\rbrace$ denotes the projection of $D$ onto the horizontal axis, and $N_{i}$ represents the number of rectangles in the $i$th column that belong to $D$.  We shall refer to this family of attractors as \textbf{Bedford-McMullen carpets}.\\

Note that for most choices of the set $D$, the Hausdorff and box-counting dimension will be different from each other, with the equality holding when all non-empty columns have the same number of elements. Similar phenomena occur in more general \textit{carpets}, that is, attractors of systems defined by a pattern $D$ of (not necessarily equal) rectangles in the unit square. Due to the importance of this condition in this paper, we give a precise definition. Consider the subsets 
\begin{equation} \label{ij}
I_{i} = \big \{ (k,l) \in D : k=i \big \} \qquad \qquad J_{j} =  \big \{ (k,l) \in D : l=j \big \}. 
\end{equation}
\vspace{-15pt}
\begin{defn} A carpet (or its defining IFS) is said to have \textbf{uniform vertical fibres} if $\vert I_{i} \vert = \vert I_{i'} \vert $ for all  $i, i' \in \overline{D}_{X}$, provided that $I_{i}, I_{i'}\neq \emptyset$. Analogously, a carpet has \textbf{uniform horizontal fibres} if whenever $J_{j},J_{j'}\neq \emptyset$, it holds $\vert J_{j} \vert = \vert J_{j'} \vert $ for all  $j, j' \in \overline{D}_{Y}$ . If the system has both uniform vertical and horizontal fibres, we say that it has \textbf{uniform fibres}.
\end{defn}
\begin{rem}\label{bedrem} We would like to emphasize that usually only uniform vertical fibres are required for some properties to hold, as for example the Hausdorff, packing and box-counting dimension of a Bedford-McMullen carpet are equal if and only if it has uniform vertical fibres. However, the proofs of our results will make use of  Bedford-McMullen-type carpets (see Definition \ref{defT}) with necessarily both uniform horizontal and vertical fibres.
\end{rem}
\begin{figure}[h]
	\centering
		\includegraphics[scale=.38]{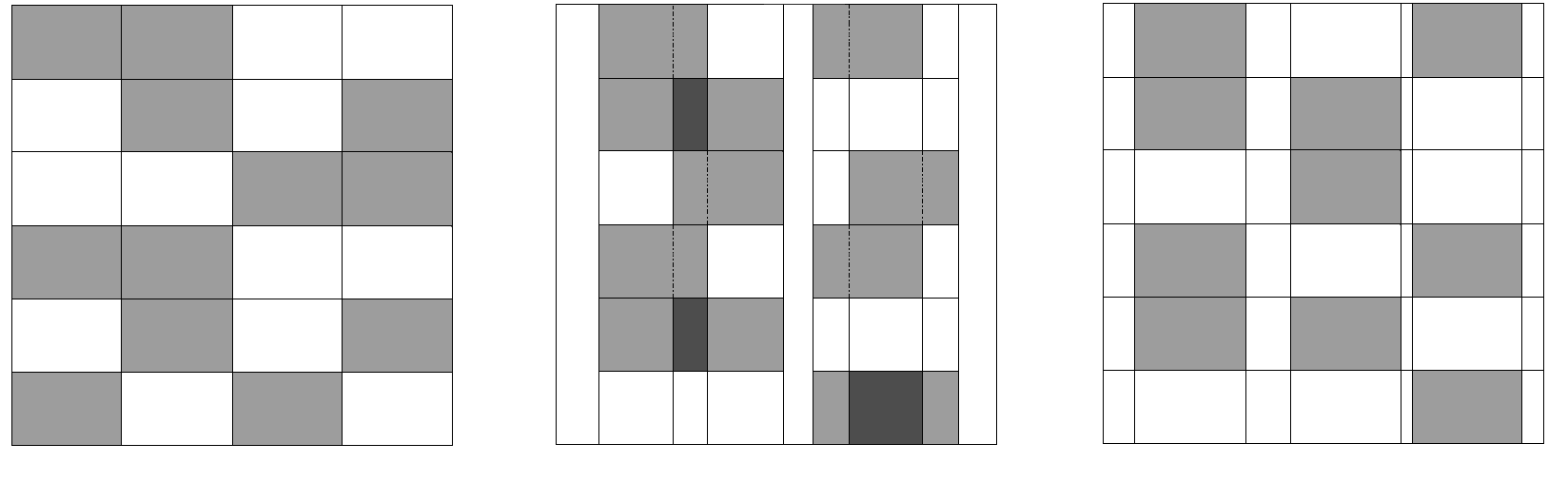}
		\vspace{-30pt}
		  \label{fig:setting}
		  \caption{From left to right: Bedford-McMullen, Fraser-Shmerkin and Bedford-McMullen-type carpets.}
\end{figure}

Following Bedford and McMullen's work, other specific settings with increasing levels of generality were studied: see Gatzouras and Lalley \cite{Lal}, Bara\'{n}ski \cite{Baranski}  or Feng and Wang \cite{Feng} for carpets defined by a pattern of rectangles with non-overlapping interior. Unlike in Bedford-McMullen's setting, in these cases there are no explicit formulae for the Hausdorff dimension, but instead they are given via a variational principle and may be difficult to compute or even estimate. Shmerkin \cite{Shmerkin} considered carpets where overlapping is permitted, obtaining expressions for the dimensions of self-affine sets in certain parametrized families.``Box-like'' sets were Fraser's setting in \cite{Boxlike}, where he relaxed the condition of the maps being orientation-preserving and allowed them to have non-trivial rotational and reflectional components. It is also worth noting the work of D.J. Feng and H. Hu \cite{Feng2} on ergodic properties of IFSs, that in particular relate the Hausdorff dimension of the attractors of certain affine IFSs to projections of ergodic measures. Their results combined with Hochman's work can be used to show that the set of box-like sets where the dimension drops below Falconer's dimension is small. \\

Recently, Fraser and Shmerkin \cite{FS} combined both the general and specific approach on a generalization of Bedford-McMullen carpets, see Figure \ref{fig:setting} for an example. Once the defining pattern of such carpet is fixed, they randomise the vertical translates whilst preserving the column structure intact. As some alignment is kept in their construction, the same formulae (\ref{dims}) as those obtained for Bedford-McMullen carpets hold for all translation parameters except for a small exceptional set. Therefore, they provide a family that contains many overlapping self-affine sets whose box-counting and Hausdorff dimension are typically different from each other and thus from the affinity dimension.\\

In this paper we extend their results in two directions: on one hand we generalize the systems considered by studying self-affine sets generalizing Bara\'{n}ski carpets, and on the other hand we allow this time simultaneous vertical and horizontal translations, while preserving the rows and columns structure. We will be able to guarantee that for a big set of translation parameters, the potentially generated overlaps do not cause the dimensions of the new attractors to fall below of the dimensions (generally different from each other) of the attractor of the original system.  As a corollary, we obtain the same corresponding result for Bedford-McMullen carpets, this time with overlapping in two directions.\\

We believe that the significance of this work comes not only from providing a large family of overlapping self-affine sets which fail to satisfy the dimension equalities in Falconer's theorem, but also from showing that the recent results of Hochman for self-similar sets in $\mathbb{R}$ have consequences for self-affine sets in $\mathbb{R}^{2}$ that overlap in more than one direction.

\subsection{Our setting}
Fix positive integers $m$, $n$ and consider a partition of the unit square into $m \times n$ rectangles: for each $1\leq i\leq m$ and $1\leq j\leq n$ we fix values $0<a_i, b_j<1$ such that $\sum_{i=1}^m a_i=\sum_{j=1}^n b_j=1$, and divide the square $[0,1]^{2}$ into $m$ vertical strips of widths $a_1, \ldots, a_m$  and $n$ horizontal strips of heights $b_1, \ldots , b_n$. Let $D_0=\{(i,j):1\leq i\leq m\text{ and }1\leq j\leq n\}$. 

\begin{defn} Given a subset $D \subsetneq D_0$, we will call  the IFS $\lbrace S_{(i,j)} \rbrace_{(i,j) \in D}$ a \textbf{Bara\'{n}ski system} when for each $(i,j)\in D$,
 \vspace{-5pt} 
\begin{equation*}
S_{(i,j)}\begin{pmatrix}
 x \\
 y \\
\end{pmatrix}=\begin{pmatrix}
				a_i & 0 \\
				0 & b_j \\
				\end{pmatrix} \begin{pmatrix}
 									x \\
 									y \\
								\end{pmatrix}	+ \begin{pmatrix}
 									 \sum_{l=1}^{i-1}a_l \\
 									 \sum_{l=1}^{j-1}b_l \\
								\end{pmatrix}
\end{equation*}
is an affine transformation that maps the unit square onto a translated rectangle of width $a_i$ and height $b_j$. The corresponding attractor $F$ will be a \textbf{Bara\'{n}ski carpet}.
\end{defn}
\medskip
In \cite{Baranski}, Bara\'{n}ski computed the Hausdorff and box-counting dimension of these attractors. For our setting, given a Bara\'{n}ski system, we randomise both the horizontal and vertical translates in the described system, whilst preserving the rows and columns structure. That is, if two rectangles of $D$ are initially in the same row (resp. column), then they are translated horizontally (resp. vertically) by the same amount. See Figure \ref{fig:bar}.
\begin{figure}[h]
	\centering
		\includegraphics[scale=.4]{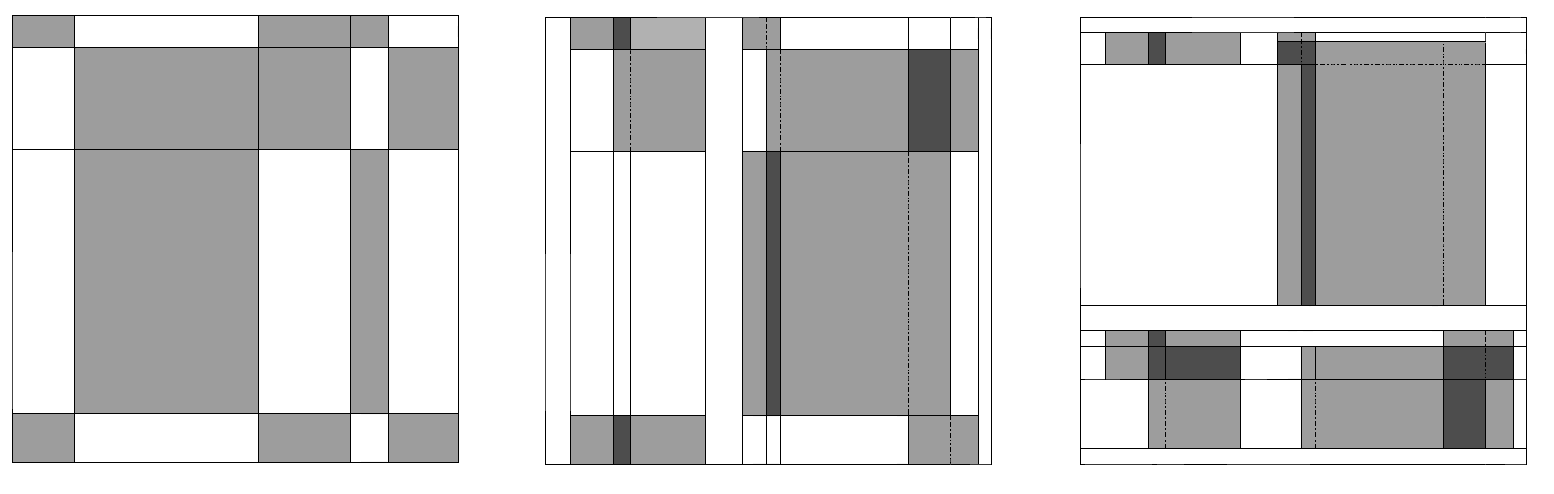}
		\vspace{-15pt}
		  \caption{From left to right: a Bara\'{n}ski carpet and two examples of our setting.}
	 \label{fig:bar}
\end{figure}

More formally, let $\overline{D}_{X} = \lbrace i \in \lbrace 1, \ldots ,m\rbrace : (i,j) \in D \text{ for some } j\rbrace$  and $\overline{D}_{Y} = \lbrace j \in \lbrace 1, \ldots ,n\rbrace : (i,j) \in D \text{ for some } i\rbrace$ denote the projections of $D$ onto the $X$ and $Y$ axes. To each $(i,j) \in (\overline{D}_{X},\overline{D}_{Y})$ we associate a ``random translation'' $(t_{i},\tau_{j}) \in [0, 1 - a] \times [0, 1 - b] $, where $a=\max_{i} a_{i}$ and $b=\max_{j} b_{j}$. We denote the set of all possible translation parameters by
\begin{equation*}
A := \left[0, 1-a \right]^{\vert \overline{D}_{X}\vert}  \times \left[0, 1- b \right]^{\vert \overline{D}_{Y}\vert}, 
\end{equation*} 
with $\vert \overline{D}_{X}\vert$, $\vert \overline{D}_{Y}\vert$ being the cardinals of the projections on the horizontal/vertical axis, i.e the corresponding number of non-empty columns/rows. For each given vector of translates $\underline{t}=(\underline{t}_X, \underline{\tau}_Y)\in A$  we define a new IFS consisting of the maps
\begin{equation*}
S_{\underline{t},(i,j)}\begin{pmatrix}
 x \\
 y \\
\end{pmatrix}=\begin{pmatrix}
				a_i & 0 \\
				0 & b_j \\
				\end{pmatrix} \begin{pmatrix}
 									x \\
 									y \\
								\end{pmatrix}	+ \begin{pmatrix}
 									 t_{i} \\
 									 \tau_{j} \\
								\end{pmatrix},
\end{equation*}
and denote by $F_{\underline{t}}$  its  associated attractor. The reason why we let the parameters $(t_i,\tau_{j}) \in  [0, 1-a]\times [0, 1-b]$ instead of $[0,1]^{2}$ is in order to ensure that $F_{\underline{t}}$ is a subset of the unit square, but this is not an essential requirement.\\

Observe that in the special case when $n \geq m$ and $a_i=1/m$ and $b_j=1/n$ for all $1\leq i\leq m$ and $1\leq j\leq n$, the Bara\'{n}ski system is in fact a Bedford-McMullen one. More generally and by analogy:

\begin{defn}\label{defT} Let $\mathcal{I}$ be a Bara\'{n}ski system such that there are real numbers $\tilde{n}> \tilde{m} >1$ for which $a_i=1/\tilde{m}$ and $b_j=1/\tilde{n}$ for all $1\leq i\leq m$ and $1\leq j\leq n$ whenever $I_i, J_j \neq \emptyset$. Then we call $\mathcal{I}$ a \textbf{Bedford-McMullen-type system} of parameters $(\tilde{m}, \tilde{n})$. See Figure \ref{fig:setting}.\\
\end{defn}

\noindent For convenience in forthcoming arguments and statements of results, we have assumed that $\tilde{n}>\tilde{m}$, but the symmetric case presents analogous conclusions. For Bedford-McMullen-type carpets with possible overlapping columns we have the following result regarding their dimensions: 

\begin{FSthm}\cite[Theorem 7.1]{FS}\label{FSthm} Let $F_{\underline{t}}$ be a Bedford-McMullen-type carpet with $\tilde{n}>\tilde{m}\geq 2$ and $\underline{t} \in A$ any vector such that the IFS $\lbrace x/\tilde{m}+t_i\rbrace_{ i \in \overline{D}_{X}}$ does not have super exponential concentration of cylinders (see Definition \ref{concent}) and $\lbrace x/\tilde{n}+\tau_j\rbrace_{ j \in \overline{D}_{Y}}$ satisfies the OSC. Then it holds 
\begin{align*}
\dim_{H}(F_{\underline{t}})=&\dfrac{\log \sum_{i \in \overline{D}_{X}}\vert I_{i} \vert^{\frac{\log \tilde{m}}{\log \tilde{n}}}}{\log \tilde{m}}, \\
\dim_{B}(F_{\underline{t}})=&\frac{\log \vert \overline{D}_{X}\vert }{\log \tilde{m}}+\dfrac{\log (\vert D\vert/\vert \overline{D}_{X}\vert )}{\log \tilde{n}}.
\end{align*}
\end{FSthm}

\subsection{Statement of results}

Bara\'{n}ski proved for his attractors that their Hausdorff dimension is given by the maximum value that a function $g$ takes over the set $\mathbb{P}^{|D|}$ of probability vectors; see Subsection \ref{subsecupperH} for concrete definitions. We are able to achieve in our case exactly the same result for a big subset of the translation parameters $\underline{t} \in A$. Recall that an IFS $\lbrace S_{1}, \ldots S_{k}\rbrace  $ is said to have an \textbf{exact overlap} if the semigroup generated by the $S_{i}$ is not free, and we will say that $\underline{t} \in A$ is \textit{algebraic} if all of its coordinates $t_i$, $\tau_j$ are algebraic.

\begin{thm} \label{THMhaus} For each Bara\'{n}ski system there exists a set $E \subseteq A $ of Hausdorff and packing dimension $|\overline{D}_{Y}|+ |\overline{D}_{X}| - 1$ (in particular of zero $\vert \overline{D}_{X} \vert +\vert \overline{D}_{Y} \vert $-dimensional Lebesgue measure) such that
\begin{align*}
\dim_{H}(F_{\underline{t}})=\max_{\textbf{p}\in\mathbb{P}^{|D|}}g(\textbf{p})\quad \quad &\text{if } \underline{t} \in A \setminus E\\
\dim_{H}(F_{\underline{t}})\leq \max_{\textbf{p}\in\mathbb{P}^{|D|}}g(\textbf{p}) \quad \quad &\text{if } \underline{t} \in E
\end{align*}
Furthermore, if all the defining parameters $a_i$, $b_j$ and the vector $\underline{t}$ are algebraic and the IFSs $\lbrace a_{i}x + t_{i}\rbrace_{ i \in \overline{D}_{X}}$ and $\lbrace b_{j} y+ \tau_{j}\rbrace_{ j \in \overline{D}_{Y}}$ do not have an exact overlap, then $\underline{t} \notin E$.\\
\end{thm}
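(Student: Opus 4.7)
The plan is to combine the Ledrappier--Young style reasoning that Bara\'{n}ski used for his non-overlapping carpets with Hochman's recent results on self-similar measures in $\mathbb{R}$, applied coordinatewise to the two one-dimensional IFSs obtained by projecting our system onto the coordinate axes. The starting observation is that, because the row and column structure is preserved by the translation scheme, the orthogonal projections of $F_{\underline{t}}$ onto the $x$- and $y$-axes are the self-similar attractors of
\[ \Phi_X(\underline{t}_X)=\{x\mapsto a_i x+t_i\}_{i\in\overline{D}_X}\quad\text{and}\quad \Phi_Y(\underline{\tau}_Y)=\{y\mapsto b_j y+\tau_j\}_{j\in\overline{D}_Y}, \]
respectively, and these are exactly the objects to which Hochman's theorem applies.

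First, I would establish the upper bound $\dim_H F_{\underline{t}}\leq \max_{\mathbf{p}\in\mathbb{P}^{|D|}} g(\mathbf{p})$ for every $\underline{t}\in A$. Bara\'{n}ski's proof of this inequality covers the attractor by approximate squares built from cylinders of the IFS and does not use any separation hypothesis; overlaps only make his covers more efficient. So the upper bound transfers to the overlapping setting with cosmetic changes, and in particular holds on $E$ as well.

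Next I would define $E_X\subseteq [0,1-a]^{|\overline{D}_X|}$ as the set of $\underline{t}_X$ for which $\Phi_X(\underline{t}_X)$ exhibits super-exponential concentration of cylinders, and define $E_Y$ analogously. Hochman's theorem \cite{Ho} bounds the Hausdorff dimension of these sets by $|\overline{D}_X|-1$ and $|\overline{D}_Y|-1$, respectively (the same proof gives the packing bound, since the estimate comes from a Borel--Cantelli argument with exponentially thin sets). Setting $E:=(E_X\times[0,1-b]^{|\overline{D}_Y|})\cup ([0,1-a]^{|\overline{D}_X|}\times E_Y)$ one gets $\dim_H E=\dim_P E\leq |\overline{D}_X|+|\overline{D}_Y|-1$, which in particular has zero $(|\overline{D}_X|+|\overline{D}_Y|)$-dimensional Lebesgue measure. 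For the final claim in the theorem, if all the $a_i$, $b_j$ and coordinates of $\underline{t}$ are algebraic and neither $\Phi_X(\underline{t}_X)$ nor $\Phi_Y(\underline{\tau}_Y)$ admits an exact overlap, Hochman's main theorem applies directly to both one-dimensional IFSs and rules out super-exponential concentration, so $\underline{t}\notin E$.

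The heart of the argument is the matching lower bound when $\underline{t}\notin E$. For each $\mathbf{p}\in\mathbb{P}^{|D|}$ I would take the Bernoulli measure $\mu_{\mathbf{p}}$ on the code space $D^{\mathbb{N}}$, let $\nu_{\mathbf{p}}$ be its push-forward to $F_{\underline{t}}$ under the natural coding map, and apply a Ledrappier--Young type decomposition expressing $\dim_H \nu_{\mathbf{p}}$ as an explicit combination of the entropy $H(\mathbf{p})$, the two Lyapunov exponents $\chi_X(\mathbf{p})=-\sum_{(i,j)\in D} p_{(i,j)}\log a_i$ and $\chi_Y(\mathbf{p})=-\sum_{(i,j)\in D} p_{(i,j)}\log b_j$, and the Hausdorff dimensions of the coordinate-projected measures $(\pi_X)_*\nu_{\mathbf{p}}$, $(\pi_Y)_*\nu_{\mathbf{p}}$. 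These projections are self-similar measures for $\Phi_X(\underline{t}_X)$ and $\Phi_Y(\underline{\tau}_Y)$ with weights inherited from $\mathbf{p}$, and Hochman's theorem guarantees, outside $E$, that each attains the maximum possible dimension (the minimum of $1$ and the corresponding similarity dimension). Substituting this into the Ledrappier--Young expression recovers exactly Bara\'{n}ski's function $g(\mathbf{p})$; optimising over $\mathbf{p}$ then gives $\dim_H F_{\underline{t}}\geq \max_{\mathbf{p}} g(\mathbf{p})$.

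The main technical obstacle will be justifying the Ledrappier--Young decomposition when the carpet overlaps in both directions, since one can no longer read off conditional measures on fibres from the geometry of disjoint columns. I would handle this by grouping words of length $k$ in $D^{\mathbb{N}}$ into ``approximate squares'' determined by the coarse row/column partition (which is preserved by the construction), estimating the entropy of the resulting quantisation of $\nu_{\mathbf{p}}$ in terms of the entropies of the projections of the $k$-th convolution iterate, and then invoking Hochman's entropy-dimension theorem for self-similar measures to convert these entropies into lower bounds on $\dim_H (\pi_X)_*\nu_{\mathbf{p}}$ and $\dim_H (\pi_Y)_*\nu_{\mathbf{p}}$ as $k\to\infty$; the fact that $\underline{t}\notin E$ is exactly what is needed for these lower bounds to be sharp in both coordinates simultaneously.
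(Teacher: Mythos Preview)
Your upper bound argument, your definition of $E$, and your handling of the algebraic-parameter clause all coincide with the paper. The divergence is in the lower bound. The paper does \emph{not} use a Ledrappier--Young/Feng--Hu formula for self-affine measures. Instead it approximates the Bara\'{n}ski system by a sequence of Bedford--McMullen-type subsystems $\mathcal{I}_k$ with \emph{uniform fibres} (built by prescribing digit frequencies close to the optimal $\mathbf p$), uses Hochman's theorem together with a Vitali-type extraction lemma to pass to further subsystems with no row overlaps, and then applies the Fraser--Shmerkin theorem to those. A small optimisation over column occupancies shows that even the worst column distribution recovers the target $s_k\to g(\mathbf p)$.

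Your measure-theoretic route is a genuine alternative and should work. The ``main technical obstacle'' you flag---justifying the Ledrappier--Young decomposition with overlaps in both directions---is in fact already handled by Feng--Hu, whose exact-dimensionality and dimension formula for invariant measures of affine IFSs require no separation. Combined with Hochman's theorem for self-similar \emph{measures}, this yields $\dim_H\nu_{\mathbf p}=g(\mathbf p)$ directly; the one check needed is that the projected entropy never exceeds the relevant Lyapunov exponent (so the $\min$ with $1$ is inactive), which follows from Jensen and $\sum_{i\in\overline D_X} a_i\le 1$. Your approach is shorter and more conceptual; the paper's is more elementary in that it invokes Hochman only for self-similar \emph{sets}, avoids the Feng--Hu black box, and makes the reduction to Fraser--Shmerkin explicit.

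One small omission: you argue only $\dim E\le |\overline D_X|+|\overline D_Y|-1$, but the theorem asserts equality. The paper obtains the matching lower bound by observing that any diagonal hyperplane $\{t_i=t_{i'}\}\subset E_X$ already has dimension $|\overline D_X|-1$.
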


For a general fixed Bara\'{n}ski system, we are not able to assert that there is a dimension drop for the attractors associated to the parameters in the exceptional set $E$. Nonetheless, the geometry of the Bedford-McMullen-type systems allows us to guarantee the existence of such a ``sharp'' exceptional set:

\begin{cor}\label{corHauss} For each Bedford-McMullen-type system of parameters $(\tilde{m},\tilde{n})$, with $\tilde{n}>\tilde{m}$, there exists a set $E_0 \subseteq A $ of Hausdorff and packing dimension $|\overline{D}_{Y}|+ |\overline{D}_{X}| - 1$ (in particular of zero $\vert \overline{D}_{X} \vert +\vert \overline{D}_{Y} \vert $-dimensional Lebesgue measure) such that
\begin{align*}
 \dim_{H}(F_{\underline{t}})=\dfrac{\log \sum_{i \in \overline{D}_{X}}\vert I_{i} \vert^{\frac{\log \tilde{m}}{\log \tilde{n}}}}{\log \tilde{m}} \qquad &\text{if } \underline{t} \in A \setminus E_0 \\
 \dim_{H}(F_{\underline{t}})<\dfrac{\log \sum_{i \in \overline{D}_{X}}\vert I_{i} \vert^{\frac{\log \tilde{m}}{\log \tilde{n}}}}{\log \tilde{m}} \qquad & \text{if } \underline{t} \in  E_0
\end{align*}
Furthermore, if $\tilde{n},\tilde{m}$ and the vector $\underline{t}$ are algebraic and the IFSs $\lbrace x/\tilde{m} + t_{i}\rbrace_{ i \in \overline{D}_{X}}$ and $\lbrace y/\tilde{n} + \tau_{j}\rbrace_{ j \in \overline{D}_{Y}}$ do not have an exact overlap, then $\underline{t} \notin E_0$.\\
\end{cor}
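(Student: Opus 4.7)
The plan is to deduce Corollary \ref{corHauss} as a direct specialisation of Theorem \ref{THMhaus}, the only genuinely analytic step being an explicit evaluation of $\max_{\mathbf{p}\in\mathbb{P}^{|D|}} g(\mathbf{p})$ when the contraction ratios are uniform. First I would invoke Theorem \ref{THMhaus} to obtain the exceptional set $E \subseteq A$ of Hausdorff and packing dimension $|\overline{D}_X|+|\overline{D}_Y|-1$, outside of which $\dim_H(F_{\underline{t}}) = \max_{\mathbf{p}} g(\mathbf{p})$, together with the unrestricted upper bound $\dim_H(F_{\underline{t}}) \leq \max_{\mathbf{p}} g(\mathbf{p})$ for every $\underline{t} \in A$. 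The algebraic clause of the corollary then follows verbatim from the algebraic clause of Theorem \ref{THMhaus}, since the contraction ratios $a_i = 1/\tilde{m}$ and $b_j = 1/\tilde{n}$ are automatically algebraic whenever $\tilde{m}$ and $\tilde{n}$ are.

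The core computation is to verify
\[
\max_{\mathbf{p}\in\mathbb{P}^{|D|}} g(\mathbf{p}) \;=\; \frac{\log \sum_{i \in \overline{D}_X} |I_i|^{\log \tilde{m}/\log \tilde{n}}}{\log \tilde{m}}.
\]
Substituting $a_i = 1/\tilde{m}$ and $b_j = 1/\tilde{n}$ into the Bara\'{n}ski expression for $g$ collapses it to a two-scale entropy functional of exactly the form that Bedford and McMullen originally optimised. A two-stage Lagrange multiplier argument will do the job: introduce the column marginals $q_i := \sum_{j : (i,j)\in D} p_{ij}$, observe that for fixed $(q_i)$ the objective is maximised when the mass within column $i$ is spread uniformly across its $|I_i|$ rectangles (this uses that $b_j$ no longer depends on $j$), and then optimise over $(q_i)_{i \in \overline{D}_X}$ subject to $\sum_i q_i = 1$. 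The outer problem reduces to a weighted entropy maximisation whose unique interior maximiser is $q_i \propto |I_i|^{\log \tilde{m}/\log \tilde{n}}$; substituting this candidate back into $g$ produces the displayed formula. Strict concavity of the objective guarantees the optimum is interior, so no degenerate boundary configurations of $\mathbb{P}^{|D|}$ need be examined separately.

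Armed with the identity $\max g = \bigl(\log \sum_{i} |I_i|^{\log \tilde{m}/\log \tilde{n}}\bigr)/\log \tilde{m}$, I would define $E_0 := \{\underline{t} \in A : \dim_H(F_{\underline{t}}) < \max g\}$. The displayed dichotomy of the corollary then becomes tautological, and moreover any $\underline{t} \notin E$ satisfies $\dim_H(F_{\underline{t}}) = \max g$, hence lies outside $E_0$; this shows $E_0 \subseteq E$ and transfers the Hausdorff and packing dimension bounds on $E$ to $E_0$. For the matching lower bound on $\dim_H E_0$, I would exhibit an explicit $(|\overline{D}_X|+|\overline{D}_Y|-1)$-dimensional family of exact-overlap parameters inside $E_0$: combining $\dim_H(F_{\underline{t}}) \leq \dim_H(\pi_X F_{\underline{t}}) + \dim_H(\pi_Y F_{\underline{t}})$ with the fact that an exact overlap in a one-dimensional self-similar IFS strictly reduces the similarity dimension of its attractor forces a strict drop in the carpet dimension, which is exactly the condition for membership in $E_0$.

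The main obstacle I anticipate is the explicit evaluation of $\max g$: although Bedford-McMullen's entropy calculation is the blueprint, one must verify carefully that the Bara\'{n}ski-type definition of $g$ from Subsection \ref{subsecupperH} does collapse cleanly to the two-scale form when the scalings are uniform, and in particular that the Lagrange argument produces a \emph{global} maximiser and not merely a critical point. A secondary subtlety is the lower bound on $\dim_H E_0$, which demands confirming that a one-dimensional exact overlap actually propagates to a strict drop in $\dim_H(F_{\underline{t}})$ rather than merely a candidate one; once these two points are settled, the rest of the argument is bookkeeping over Theorem \ref{THMhaus}.
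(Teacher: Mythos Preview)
Your overall architecture is sound and matches the paper's: invoke Theorem \ref{THMhaus}, identify $\max_{\mathbf p} g(\mathbf p)$ with the Bedford--McMullen formula, define $E_0$ as the drop set, and bound its dimension via $E_0\subseteq E$. The paper shortcuts the evaluation of $\max g$ by citing Fraser--Shmerkin's Theorem \ref{FSthm} for the original (non-overlapping) carpet, whereas your direct Lagrange computation is a perfectly valid alternative.

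There is, however, a genuine gap in your proposed lower bound for $\dim_H E_0$. The inequality $\dim_H(F_{\underline t})\le \dim_H(\pi_X F_{\underline t})+\dim_H(\pi_Y F_{\underline t})$ is too coarse. On the hyperplane $\{t_{i_1}=t_{i_2}\}$ (for just two indices, which is what you need in order to get a codimension-one family), an exact overlap drops $\dim_H(\pi_X F_{\underline t})$ from $\log|\overline D_X|/\log\tilde m$ to at most $\log(|\overline D_X|-1)/\log\tilde m$, and then your bound gives only
\[
\dim_H(F_{\underline t})\ \le\ \frac{\log(|\overline D_X|-1)}{\log\tilde m}+\frac{\log|\overline D_Y|}{\log\tilde n}.
\]
But the target $\max g=\bigl(\log\sum_i|I_i|^{\gamma}\bigr)/\log\tilde m$ can be strictly smaller than this, so no strict drop below $\max g$ follows. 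For a concrete failure take $\tilde m=3$, $\tilde n$ large, three columns each with a single rectangle in distinct rows: then $\max g=1$, while merging two columns gives the product bound $\log 2/\log 3+\log 3/\log\tilde n>1$ for $\tilde n$ large enough.

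The paper avoids this by arguing directly with the explicit formula rather than through projections. Setting $t_{i_1}=t_{i_2}$ merges two columns into one containing $N\le N_{i_1}+N_{i_2}$ rectangles; the resulting system is again of Bedford--McMullen type, so Proposition \ref{upperH} bounds $\dim_H(F_{\underline t})$ by the \emph{merged} formula. Strict concavity of $x\mapsto x^{\gamma}$ for $\gamma=\log\tilde m/\log\tilde n\in(0,1)$ then gives
\[
(N_{i_1}+N_{i_2})^{\gamma}<N_{i_1}^{\gamma}+N_{i_2}^{\gamma},
\]
which forces the merged value strictly below $\max g$. This is the step you should substitute for the product-inequality argument.
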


\noindent Similarly, Bara\'{n}ski's formulae for the box-counting dimension holds in our case for a large set of translation vectors: 

\begin{thm} \label{THMbox} For each Bara\'{n}ski system there exists a set $E \subseteq A $ of Hausdorff and packing dimension $|\overline{D}_{Y}|+ |\overline{D}_{X}| - 1$ (in particular of zero $\vert \overline{D}_{X} \vert +\vert \overline{D}_{Y} \vert $-dimensional Lebesgue measure) such that
\begin{align*}
\dim_{B}(F_{\underline{t}})=\dim_{P}(F_{\underline{t}})=\max (D_{A}, D_{B})  \qquad & \text{if } \underline{t} \in A \setminus E\\
\dim_{B}(F_{\underline{t}})=\dim_{P}(F_{\underline{t}})\leq \max (D_{A}, D_{B})  \qquad & \text{if } \underline{t} \in E 
\end{align*}
where $D_{A}$, $D_{B}$ are the unique real numbers such that
\begin{equation}\label{DADB}
\sum_{(i,j)\in D} a_{i}^{t_{A}}b_{j}^{D_{A}-t_{A}}=1, \qquad \qquad \sum_{ (i,j)\in D} b_{j}^{t_{B}}a_{i}^{D_{B}-t_{B}}=1, 
\end{equation}
and $t_{A}$, $t_{B}$ are the unique real numbers such that 
\begin{equation}\label{TB}
\sum_{ i \in \overline{D}_{X} } a_{i}^{t_{A}}=1, \qquad \qquad \sum_{j \in \overline{D}_{Y}} b_{j}^{t_{B}}=1. 
\end{equation}

\noindent Furthermore, if all the defining parameters $a_i$, $b_j$ and the vector $\underline{t}$ are algebraic and the IFSs $\lbrace a_{i}x + t_{i}\rbrace_{ i \in \overline{D}_{X}}$ and $\lbrace b_{j} y+ \tau_{j}\rbrace_{ j \in \overline{D}_{Y}}$ do not have an exact overlap, then $\underline{t} \notin E$.\\
\end{thm}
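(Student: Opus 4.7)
The plan is to split the argument into four pieces: a universal upper bound $\overline{\dim}_B F_{\underline{t}}\le \max(D_A,D_B)$, the definition of an exceptional set $E\subseteq A$ of the prescribed dimension via Hochman's theorem applied to the two coordinate-projection IFSs, a matching lower bound on $\underline{\dim}_B F_{\underline{t}}$ for $\underline{t}\notin E$, and the identity $\dim_P F_{\underline{t}}=\overline{\dim}_B F_{\underline{t}}$. The overall strategy parallels Fraser and Shmerkin's approach to Theorem \ref{FSthm}, but is adapted to the Bara\'{n}ski setting where the contractions are non-uniform in both coordinate directions.

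For the upper bound I would adapt Bara\'{n}ski's approximate-square argument from \cite{Baranski}. At scale $r>0$ and for each finite word $\mathbf{w}=(i_1,j_1)\cdots(i_k,j_k)$ over $D$, iterate the cylinder $S_{\underline{t},\mathbf{w}}([0,1]^2)$ and stop at the first moment that its width $\prod_\ell a_{i_\ell}$ or its height $\prod_\ell b_{j_\ell}$ drops below $r$. The stopped words split into two families, one governed by $D_A$ (widths stop first) and the other by $D_B$ (heights stop first); a direct combinatorial count using \eqref{DADB} and \eqref{TB} bounds the number of axis-aligned $r$-squares required to cover the resulting rectangles by $\lesssim r^{-\max(D_A,D_B)}$. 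Since overlaps can only reduce the number of distinct $r$-squares actually occupied, this bound holds for every $\underline{t}\in A$, and the standard inequality $\dim_P\le\overline{\dim}_B$ then delivers the packing upper bound as well.

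To build the exceptional set I would apply Hochman's theorem \cite{Ho} separately to the self-similar IFSs $\{a_ix+t_i\}_{i\in\overline{D}_X}$ and $\{b_jy+\tau_j\}_{j\in\overline{D}_Y}$, whose attractors coincide with the axis projections $\pi_X(F_{\underline{t}})$ and $\pi_Y(F_{\underline{t}})$. Let $E_X\subset [0,1-a]^{|\overline{D}_X|}$ be the set of horizontal translates for which the first IFS exhibits super-exponential concentration of cylinders (Definition \ref{concent}), and define $E_Y$ analogously; Hochman's theorem gives $\dim_H E_X,\dim_P E_X\le |\overline{D}_X|-1$, and likewise for $E_Y$. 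Setting $E:=(E_X\times[0,1-b]^{|\overline{D}_Y|})\cup([0,1-a]^{|\overline{D}_X|}\times E_Y)$ yields $\dim_H E,\dim_P E\le |\overline{D}_X|+|\overline{D}_Y|-1$, with matching lower bounds coming from the hyperplanes of parameters that force an exact overlap in a single row or column. The algebraic clause of Hochman's theorem then provides the final sentence of the statement directly.

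For the lower bound, assume without loss of generality that $D_A\ge D_B$ and, at scale $r$, form the family $\mathcal{W}(r)$ of words whose width has just fallen below $r$. For $\underline{t}\notin E$ the projection $\pi_X(F_{\underline{t}})$ attains its full similarity dimension $t_A$ (and $\pi_Y(F_{\underline{t}})$ attains $t_B$), so $\gtrsim r^{-t_A}$ distinct horizontal $r$-intervals are needed to cover $\pi_X(F_{\underline{t}})$; within each such interval the column-and-row alignment of $F_{\underline{t}}$ stacks approximately $r^{-(D_A-t_A)}$ cells vertically, yielding $\gtrsim r^{-D_A}$ mesh $r$-squares meeting $F_{\underline{t}}$. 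The identity $\dim_P F_{\underline{t}}=\overline{\dim}_B F_{\underline{t}}$ follows because every relative neighbourhood of a point of $F_{\underline{t}}$ contains a bi-Lipschitz image $S_{\underline{t},\mathbf{w}}(F_{\underline{t}})$ of the full attractor, so $\overline{\dim}_B$ is locally constant on $F_{\underline{t}}$. The hard part will be turning the projection dimension into a precise vertical count: the non-uniformity of the $a_i$ and $b_j$ means stopping times vary column by column and the cylinders in $\mathcal{W}(r)$ come in a continuum of aspect ratios. Following Remark \ref{bedrem}, the natural workaround is to sandwich $F_{\underline{t}}$ between Bedford-McMullen-type carpets with uniform fibres, obtained by coarse-graining the $a_i$ and $b_j$ to common factors, apply a version of Theorem \ref{FSthm} that allows overlap in both directions to each such carpet, and pass to a limit as the coarse-graining is refined.
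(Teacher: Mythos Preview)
Your decomposition into upper bound, exceptional set, lower bound, and packing-equals-box is exactly the paper's, and your construction of $E$ and the packing argument match verbatim. Your upper bound route via Bara\'{n}ski's approximate squares differs from the paper, which instead invokes Fraser's box-like theorem \cite{Boxlike}; the paper explicitly remarks that your approach should also work, so no issue there.

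The genuine gap is in the lower bound. Your final sentence proposes to approximate $F_{\underline{t}}$ by Bedford--McMullen-type subcarpets with uniform fibres and then ``apply a version of Theorem~\ref{FSthm} that allows overlap in both directions'' to those subcarpets. This is circular: a two-direction Fraser--Shmerkin theorem for Bedford--McMullen-type carpets is precisely Corollary~\ref{corBox}, which is deduced \emph{from} Theorem~\ref{THMbox}. Worse, even granting such a result, it would not give what you need. The paper emphasizes (in the remark following Lemma~\ref{sktoDB}) that the box dimension of the approximating subcarpet $\Lambda_k$ can be \emph{strictly smaller} than the target $D_A$, because passing to $\Gamma_k$ may shrink the horizontal projection and hence drop its dimension below $t_A$. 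So computing $\dim_B\Lambda_k$ directly cannot recover $D_A$.

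The paper's resolution, which your plan is missing, is a two-stage separation argument followed by a change of viewpoint. First it approximates twice more: iterate $\mathcal{I}_k$, use Hochman and a Vitali-type lemma on the $Y$-projection to kill row overlaps, restore uniform fibres, then repeat on the $X$-projection to obtain a subsystem $\{S_{\underline t,\lambda}\}_{\lambda\in U_q}$ satisfying the OSC with $|U_q|$ controlled from below (Lemma~\ref{lemmaB}). The decisive step is then to look not at the attractor of this subsystem but at $\bigcup_{\lambda\in U_q} S_{\underline t,\lambda}(F_{\underline t})$: each piece is a scaled copy of the \emph{original} carpet, whose horizontal projection has the full dimension $t_A$ because $\underline t\notin E$. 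Covering each piece by $r$-squares then reduces to covering $\pi_X(F_{\underline t})$ by intervals, and the OSC guarantees bounded overlap between pieces, yielding $N_r(F_{\underline t})\gtrsim r^{-(s_k-\epsilon)}$. A separate ergodic argument (Lemma~\ref{ngeqm}) is needed to verify $m_k\le n_k$, so that one knows which projection to use; your outline does not address this either.
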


\begin{cor}\label{corBox} For each Bedford-McMullen-type system of parameters $(\tilde{m},\tilde{n})$, with $\tilde{n}>\tilde{m}$, there exists a set $E_1 \subseteq A $ of Hausdorff and packing dimension $|\overline{D}_{Y}|+ |\overline{D}_{X}| - 1$ (in particular of zero $\vert \overline{D}_{X} \vert +\vert \overline{D}_{Y} \vert $-dimensional Lebesgue measure) such that
\begin{align*}
 \dim_{B}(F_{\underline{t}})=\dim_{P}(F_{\underline{t}})=\frac{\log \vert \overline{D}_{X}\vert }{\log \tilde{m}}+\dfrac{\log (\vert D\vert/\vert \overline{D}_{X}\vert )}{\log \tilde{n}} \qquad &\text{if } \underline{t} \in A \setminus E_1\\
 \dim_{B}(F_{\underline{t}})=\dim_{P}(F_{\underline{t}})<\frac{\log \vert \overline{D}_{X}\vert }{\log \tilde{m}}+\dfrac{\log (\vert D\vert/\vert \overline{D}_{X}\vert )}{\log \tilde{n}} \qquad &\text{if } \underline{t} \in E_1
\end{align*}
Furthermore, if $\tilde{n},\tilde{m}$ and the vector $\underline{t}$  are algebraic and the IFSs $\lbrace x/\tilde{m} + t_{i}\rbrace_{ i \in \overline{D}_{X}}$ and $\lbrace y/\tilde{n} + \tau_{j}\rbrace_{ j \in \overline{D}_{Y}}$  do not have an exact overlap, then $\underline{t} \notin E_1$.\\
\end{cor}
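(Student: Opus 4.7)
The plan is to obtain Corollary \ref{corBox} as a specialization of Theorem \ref{THMbox} to Bedford-McMullen-type systems, together with one extra ingredient that upgrades the inequality on the exceptional set to a strict one. Since any Bedford-McMullen-type system is in particular a Bara\'{n}ski system, Theorem \ref{THMbox} applies immediately and supplies a set $E \subseteq A$ of Hausdorff and packing dimension $|\overline{D}_X|+|\overline{D}_Y|-1$. My first task is to simplify (\ref{DADB})--(\ref{TB}) under the uniform widths $a_i = 1/\tilde{m}$, $b_j = 1/\tilde{n}$, where the sums collapse to $|\overline{D}_X|\,\tilde{m}^{-t_A}=1$, $|D|\,\tilde{m}^{-t_A}\tilde{n}^{-(D_A-t_A)}=1$, and symmetrically for $t_B,D_B$. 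Solving yields
\[
t_A = \tfrac{\log|\overline{D}_X|}{\log\tilde{m}}, \quad t_B = \tfrac{\log|\overline{D}_Y|}{\log\tilde{n}}, \quad
D_A = \tfrac{\log|\overline{D}_X|}{\log\tilde{m}} + \tfrac{\log(|D|/|\overline{D}_X|)}{\log\tilde{n}}, \quad D_B = \tfrac{\log|\overline{D}_Y|}{\log\tilde{n}} + \tfrac{\log(|D|/|\overline{D}_Y|)}{\log\tilde{m}}.
\]

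Next, to identify $\max(D_A, D_B)$ with the Bedford-McMullen expression on the right-hand side of the statement, I compute
\[
D_A - D_B = \log\!\left(\tfrac{|\overline{D}_X|\,|\overline{D}_Y|}{|D|}\right)\!\left(\tfrac{1}{\log\tilde{m}} - \tfrac{1}{\log\tilde{n}}\right),
\]
and observe that both factors are non-negative: the first because $D \subseteq \overline{D}_X \times \overline{D}_Y$, the second because $1 < \tilde{m} < \tilde{n}$ by hypothesis. Hence $\max(D_A,D_B) = D_A$, which already delivers the claimed equality for $\underline{t} \in A \setminus E$ and, combined with the algebraic non-overlap clause of Theorem \ref{THMbox}, the final assertion of the corollary.

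The remaining step is to define $E_1 \subseteq A$ so that the inequality becomes \emph{strict} on $E_1$. I would take $E_1$ to be the union, over all pairs of distinct finite words $\mathbf{i},\mathbf{j}$ of equal length, of the affine hyperplanes in $A$ cut out by the exact-overlap equations in either of the one-dimensional IFSs $\{x/\tilde{m}+t_i\}_{i\in\overline{D}_X}$ or $\{y/\tilde{n}+\tau_j\}_{j\in\overline{D}_Y}$, together with the algebraic-exceptional part of $E$. Being a countable union of hyperplanes plus a set of smaller dimension, this has $\dim_H E_1 = \dim_P E_1 = |\overline{D}_X|+|\overline{D}_Y|-1$. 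The key lemma to verify is that an exact overlap in, say, the horizontal IFS strictly reduces the exponential growth rate of the number of distinct $k$-th level horizontal cylinders below $|\overline{D}_X|^k$; combined with the approximate-square covering argument standard for Bedford-McMullen carpets (iterating the $x$-component $k$ times and the $y$-component $\lceil k\log\tilde{m}/\log\tilde{n}\rceil$ times to match scales), this propagates to the strict upper bound $\dim_B F_{\underline{t}} < D_A$, and a symmetric argument handles $y$-direction overlaps. The main obstacle is precisely this propagation step: translating a combinatorial reduction in one factor IFS into a genuine reduction of the approximate-square count for the two-dimensional carpet. The rigid row--column alignment of the Bedford-McMullen-type geometry is crucial here, as it prevents an overlap in one coordinate from being compensated by extra separation in the other.
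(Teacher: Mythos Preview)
Your first two paragraphs match the paper exactly: you apply Theorem \ref{THMbox}, compute $t_A,t_B,D_A,D_B$ explicitly, and verify $D_A\ge D_B$ via the identity $D_A-D_B=\log(|\overline{D}_X||\overline{D}_Y|/|D|)(1/\log\tilde m - 1/\log\tilde n)$. So far so good.

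The difficulty is entirely in your third paragraph, where your construction of $E_1$ diverges from the paper's and runs into trouble. You propose to \emph{define} $E_1$ as the countable union of exact-overlap hyperplanes in the two projected IFSs together with an unspecified ``algebraic-exceptional part of $E$'', and then to argue strict inequality on each piece. There are two problems. First, this set need not coincide with the set $\{\underline t:\dim_B F_{\underline t}<D_A\}$: parameters in $E$ for which SECC holds without an exact overlap (whose non-existence is conjectural, not proven) would lie in $E\setminus E_1$, and there Theorem \ref{THMbox} only gives $\dim_B F_{\underline t}\le D_A$, so your equality claim on $A\setminus E_1$ may fail. Second, your ``key lemma'' --- that a horizontal exact overlap forces a strict drop in the approximate-square count --- is genuinely delicate and you leave it as an obstacle; it would require controlling the growth rate of distinct level-$k$ cylinders uniformly in $k$, which is more than you need.

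The paper avoids all of this by reversing the logic. It takes $E_1:=\{\underline t\in A:\dim_B F_{\underline t}<D_A\}$ \emph{by definition}, so the biconditional is automatic and $E_1\subseteq E$ gives the upper bound $\dim E_1\le |\overline{D}_X|+|\overline{D}_Y|-1$ immediately. For the lower bound it exhibits a \emph{single} hyperplane inside $E_1$: choose some column $i$ containing at least two rectangles $(i,j_1),(i,j_2)\in D$ (if no such column exists the result is trivial) and set $\mathcal P=\{\underline t:\tau_{j_1}=\tau_{j_2}\}$. On $\mathcal P$ these two rectangles coincide, so the attractor is generated by a Bedford-McMullen-type system with the same $|\overline{D}_X|$ but at most $|D|-1$ maps; the upper bound (Corollary \ref{upperB}) then gives
\[
\dim_B F_{\underline t}\le \frac{\log|\overline{D}_X|}{\log\tilde m}+\frac{\log((|D|-1)/|\overline{D}_X|)}{\log\tilde n}<D_A,
\]
so $\mathcal P\subseteq E_1$ and $\dim E_1\ge |\overline{D}_X|+|\overline{D}_Y|-1$. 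This sandwich is the missing idea: you do not need to understand the whole drop set, only to trap its dimension between one concrete hyperplane and $E$.
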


\begin{rem} The exceptional set $E$ in Theorems \ref{THMhaus} and \ref{THMbox} depends on the defining parameters $a_i, b_j$ of the fixed Bara\'{n}ski system. Nonetheless, $E$ happens to be the same set in both theorems when working with the same original system. See equation (\ref{setE}) for its definition. However, the sets $E_0$ and $E_1$ in the corollaries are not necessarily equal, and in fact they will not be in most cases.\\
\end{rem}

\begin{structure} We start by establishing some symbolic notation in Section \ref{measures}, in addition to describing those results due to Hochman that will play a key role in our proofs. Section \ref{sectionHaus} deals with our results concerning the Hausdorff dimension, i.e, Theorem \ref{THMhaus} and Corollary \ref{corHauss}. We will firstly discuss how Bara\'{n}ski's argument for getting an upper bound adapts to our setting, and then we will estimate the lower bound through controlled approximations: firstly to a Bedford-McMullen-type subsystem, and then using Hochman's results to a new subsystem without overlapping rows. The new subsystems will have ``enough maps'' as to give us the desired bound by applying Fraser-Shmerkin's Theorem. Finally, Section \ref{sectionBoxC} addresses the calculation of the box-counting dimension, Theorem \ref{THMbox} and Corollary \ref{corBox}, for which an upper bound is provided by Fraser's work \cite[Theorem 2.4]{Boxlike} on box-like sets. A lower bound will be estimated following a similar reasoning to that for the Hausdorff dimension. However, this time we will have to perform approximations until we get a system without any overlaps, since the dimension will be computed  by estimating the number of squares of a same size required to cover the image of the original carpet under the final approximating subsystem.
\end{structure}

\subsection*{Acknowledgements}
I am especially grateful to my supervisor Thomas Jordan for introducing me to this problem and for his continuous help and guidance, as well to the referee for many helpful and detailed comments. I would also like to thank Alexandre De Zotti and Lasse Rempe-Gillen for their valuable suggestions.  

\section{Symbolic notation and self-similar measures} \label{measures} A direct correspondence between our attractors and certain symbolic spaces will allow us to work with the usually simpler geometry of the latter, as well as transferring properties between spaces. We start by setting some notation. For $\mathbold{\lambda_{k}}=(\lambda_{1},\lambda_{2},\ldots ,\lambda_{k})=((i_{1},j_{1}),\ldots ,(i_{k},j_{k})) \in D^{k}$ and fixed $\underline{t} \in A$, we denote the composition of the associated maps by
\[S_{\underline{t},\mathbold{\lambda_{k}}}=S_{\underline{t},(i_{1},j_{1})}\circ \cdots \circ S_{\underline{t},(i_{k},j_{k})}.\]

\noindent The image of the unit square under these maps will be represented by $$\Delta_{\underline{t},\mathbold{\lambda_{k}}}=S_{\underline{t},\mathbold{\lambda_{k}}}([0,1]^{2}),$$whose respective width and height are 
\[ A_{\mathbold{\mathbold{\lambda_{k}}}}:=a_{i_{1}}\cdots a_{i_{k}}  \qquad \qquad  B_{\mathbold{\lambda_{k}}}:=b_{j_{1}}\cdots b_{j_{k}}.\] 

\noindent As auxiliary variables we define
\[T_{\mathbold{\lambda_{k}}}:=\min(A_{\mathbold{\lambda_{k}}},B_{\mathbold{\lambda_{k}}}) \qquad L_{\mathbold{\lambda_{k}}}:=\max(A_{\mathbold{\lambda_{k}}},B_{\mathbold{\lambda_{k}}}).\]
By convention, $\mathbold{\lambda_{0}}=\emptyset$ and $A_\emptyset=B_\emptyset=1$.

\begin{defn}
We call $\mathbold{\lambda_{k}}$ an \textbf{$\mathbold{A}$-sequence} (resp. $B$-sequence) 
if $ L_{\mathbold{\lambda_{k}}}= A_{\mathbold{\lambda_{k}}}$ (resp. $L_{\mathbold{\lambda_{k}}}=B_{\mathbold{\lambda_{k}}}$).
Let $\mathbold{\lambda_{k}}$ and $\mathbold{\lambda'_{k}}$ be two $\mathrm{A}$-sequences (resp. two $B$-sequences). We say that $\mathbold{\lambda_{k}}$ and $\mathbold{\lambda'_{k}}$ are of the \textbf{same type} if for every $l=1, \ldots $, $k$ we have $i_l=i'_l$ (respectively $j_l=j'_l$). We write $\mathbold{\lambda_{k}} \sim \mathbold{\lambda'_{k}}$ in this situation. Otherwise, we say that $\mathbold{\lambda_{k}}$ and $\mathbold{\lambda'_{k}}$ are of different types.\\
\end{defn}

\noindent Note that two sequences are of the same type if and only if $\Delta_{\underline{t},\mathbold{\lambda_{k}}}$ and $\Delta_{\underline{t},\mathbold{\mathbold{\lambda'_{k}}}}$ are in the same column (resp. row).\\

Given an IFS, for each point $(x,y)$ of its attractor and for any compact set $E$ such that $S_{(i,j)}(E)\subseteq E$, there exists at least one sequence $\mathbold{\lambda}=\lim_{k\rightarrow \infty}\mathbold{\lambda_{k}}=((i_{1}(x),j_{1}(y)),\ldots,(i_{k}(x),j_k(y)),\ldots)$ such that $(x,y) \in S_{\mathbold{\lambda_{k}}}(E)$ for all $k$. (See \cite[Chapter 9]{FALbook}). In particular, $\lbrace (x,y)\rbrace=\bigcap^{\infty}_{k=1}S_{\underline{t}, \mathbold{\lambda_{k}}}(E)$. Since our functions are uniformly contracting, we can then write
\begin{equation} \label{limitx}
(x,y)=\sum_{k=1}^{\infty} \left(A_{\mathbold{\lambda_{k-1}}} t_{i_{k}(x)} ,B_{\mathbold{\lambda_{k-1}}} \tau_{j_{k}(y)}\right).  
\end{equation}
Thus, if we denote by $D^{\mathbb{N}}$ the set of all sequences of elements of $D$, that is, $D^{\mathbb{N}} = \lbrace (\lambda_{l})_{l=1}^{\infty} : \lambda_{i} \in D \rbrace$,
we get a surjective function that codes our fractal: 
\begin{equation} \label{Pi}
{\setstretch{2}
\begin{matrix}				
\mathlarger{\mathlarger{\Pi_{\underline{t}}}}\: : \:\mathlarger{\mathlarger{D^{\mathbb{N}}}} &\qquad \mathlarger{\mathlarger{\longrightarrow}} & \mathlarger{\mathlarger{F_{\underline{t}}}} \\
				\qquad \mathbold{\lambda} & \quad \rightsquigarrow & \sum_{k=1}^{\infty} \left( A_{\mathbold{\lambda_{k-1}}} t_{i_{k}(x)} ,B_{\mathbold{\lambda_{k-1}}} \tau_{j_{k}(y)}\right)  \\
\end{matrix}}
\end{equation}

\noindent The map $\Pi_{\underline{t}}$ will allow us to induce a measure on $F_{\underline{t}}$ from a suitable measure on $ D^{\mathbb{N}}$.\\

\begin{defn} 
A \textbf{cylinder of level $ \mathbold{k}$} in $D^{\mathbb{N}}$ is a set of the form 
\[[\lambda_{1},\ldots,\lambda_{k}]=\lbrace \mathbold{\omega}=(\omega_{l})_{l=1}^{\infty}  \in D^{\mathbb{N}} \text{  such that  } \omega_{l}=\lambda_{l} \text{  for all  } 1\leq l \leq k  \rbrace.\]
We will also use the notation $\mathcal{C}_{\mathbold{\lambda_k}}$ to refer to the same set. Besides, for two sequences $\mathbold{\lambda}$  and $\mathbold{\omega} \in D^{\mathbb{N}}$, $\vert \mathbold{\lambda} \wedge \mathbold{\omega} \vert =\min \lbrace k : \lambda_k \neq \omega_{k} \rbrace$ is the index of the first coordinate in which the sequences differ.\\
\end{defn}

Note that a cylinder of level $k$ is the set of all sequences that agree on some  specific first $k$ terms. Cylinders can generate a $\sigma$-algebra $\mathcal{A}$ that makes the pair $(D^{\mathbb{N}}, \mathcal{A})$ a measurable space, over which we can define a class of measures:  

\begin{defn}\label{Bermeasures}
Given a probability vector $\mathbold{p}=(p_{1},\ldots, p_{\vert D \vert})$, the \textbf{Bernoulli measure} with weights $\mathbold{p}$ is the measure $\nu_{\mathbold{p}}$ which assigns to each cylinder $[\lambda_{1},\ldots,\lambda_{k}]$ the value
\[\nu_{\mathbold{p}}([\lambda_{1},\ldots,\lambda_{k}])=p_{\lambda_{1}}p_{\lambda_{2}}\cdots p_{\lambda_{k}},\]
where a bijection from the the naturals $1,2, \ldots, \vert D \vert$ to the elements $\lambda_{l} \in D$ has been defined.\\
\end{defn}

For any Bernoulli measure $\nu_{\mathbold{p}}$ defined on $D^{\mathbb{N}}$, we can define a measure $\mu_{\mathbold{p}}$ on our fractal $F_{\underline{t}}$ as the pushforward measure of $\nu_{\mathbold{p}}$ by the map $\Pi_{\underline{t}}$ given in (\ref{Pi}): 
$$ \mu_{\mathbold{p}}:=\nu_{\mathbold{p}} \circ \Pi_{\underline{t}}^{-1}.$$

Observe that the function $\Pi_{\underline{t}}$ can be rewritten as $\Pi_{\underline{t}}(\mathbold{\lambda})= \bigcap_{k=1}^{\infty}\Delta_{\underline{t},\mathbold{\lambda_{k}}}$. Then, since $\Pi_{\underline{t}}$ is not necessarily injective because the sets $\Delta_{\underline{t},\mathbold{\lambda_{k}}}$ can intersect, we can only guarantee that $[\lambda_{1},\ldots,\lambda_{k}]\subset \Pi_{\underline{t}}^{-1}(\Delta_{\underline{t},\mathbold{\lambda_{k}}})$. Therefore,
\begin{equation}\label{measureF}
\mu_{\mathbold{p}}(\Delta_{\underline{t},\mathbold{\lambda_{k}}}) \geq \nu_{\mathbold{p}}([\lambda_{1},\ldots,\lambda_{k}])= p_{\lambda_{1}}p_{\lambda_{2}}\cdots p_{\lambda_{k}}.
\end{equation}\\
\noindent We now present results due to Hochman \cite{Ho} on the dimensions of self-similar sets supported on $\mathbb{R}$. Let $$\mathcal{I} =\lbrace S_{i}(x) = c_i x+t_{i}\rbrace_{i \in B},$$ with $B$ a finite index set, $c_i\in (0,1)$ for all $i \in B$, and the maps $S_{i}$ acting on $\mathbb{R}$. Hochman's theorem will exclude some ``problematic'' IFS's, namely:
\begin{defn}\label{concent}
 We say that  the IFS $\mathcal{I}$ has \textbf{super-exponential concentration of cylinders (SECC)} if $-\log\gamma_{k}/k \rightarrow \infty$  (with the convention $\log 0 = -\infty$), where
\[\gamma_{k}=\min_{\mathbold{\lambda_k}\neq \mathbold{\lambda'_k}} \vert S_{\mathbold{\lambda_k}}(0)-S_{\mathbold{\lambda'_k}}(0) \vert\]
and $S_{\mathbold{\lambda_k}}(x) = S_{i_{1}} \circ \cdots \circ S_{i_{k}}(x)\text{ if } \mathbold{\lambda_k}= (i_{1},\ldots,i_{k}) \in B^{k}$.\\
\end{defn}

That is, $\gamma_{k}$ records the minimum distance between different $k$-cylinders, and Definition \ref{concent} demands the distance to decrease faster than any power as a function of $k$. So far, super-exponential concentration of cylinders are only known to happen when there are exact overlaps, i.e when the the semigroup generated by the defining maps of the IFS $\mathcal{I}$ is not free. \\

Observe that in terms of the generation of the attractor, if such semigroup is not free we will have two different codings for some rectangle of generation $k$, that is, 
$\Delta_{\underline{t},\mathbold{\lambda_{k}}}=\Delta_{\underline{t},\mathbold{\lambda'_{k}}}$ for some $\mathbold{\lambda},\mathbold{\lambda'}$ and $k$. We also note that an exact overlap means that $\gamma_{k}= 0$ for some $k$.\\ 

\begin{rem} \label{tilde} Let $\mathcal{\tilde{I}}$ be an IFS obtained by first iterating a fixed number of times all the maps of $\mathcal{I}$, where $\mathcal{I}$ is an IFS that does not have super-exponential concentration of cylinders, and then removing some of the maps. Then $\mathcal{\tilde{I}}$ does not have super-exponential concentration of cylinders.\\
\end{rem}

\begin{thm}\label{H1}\cite[Corollary 1.2]{Ho}. Suppose the IFS $\mathcal{I} =\lbrace S_{i}(x) = c_i x+t_{i}\rbrace_{i \in B}$ does not have super-exponential concentration of cylinders. Then its attractor $F$ satisfies
\[\dim_{H}F=min\left(s, 1\right),\]
where $s$ is the similarity dimension defined in equation (\ref{sim}).\\
\end{thm}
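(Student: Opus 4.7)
The plan is to deduce the statement from the corresponding dimension formula for self-similar \emph{measures}, since the Hausdorff dimension of the attractor bounds from below that of any probability measure supported on it. First I would associate to each probability vector $\mathbold{p}=(p_i)_{i\in B}$ the self-similar measure $\mu_{\mathbold{p}}$ obtained by pushing the Bernoulli measure of weights $\mathbold{p}$ through the coding map, and introduce the \emph{measure similarity dimension}
\[
s(\mathbold{p})=\frac{-\sum_{i\in B} p_i \log p_i}{-\sum_{i\in B} p_i \log c_i}.
\]
One has $\sup_{\mathbold{p}} s(\mathbold{p}) = s$ (the set similarity dimension of (\ref{sim})), while the upper bound $\dim_H F \le \min(s,1)$ follows from covering $F$ by the level-$k$ cylinder intervals and from $F \subseteq \mathbb{R}$. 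The task therefore reduces to proving $\dim \mu_{\mathbold{p}} \ge \min(s(\mathbold{p}),1)$ for every $\mathbold{p}$ under the non-SECC hypothesis.

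To estimate this lower bound I would analyze $\mu_{\mathbold{p}}$ via the dyadic-scale entropies $H_n(\mu):=H(\mu\mid\mathcal{D}_n)/n$, with $\mathcal{D}_n$ the partition of $\mathbb{R}$ into intervals of length $2^{-n}$. Iterating the self-similarity relation $\mu=\sum_i p_i (S_i)_*\mu$ expresses $\mu$ at scale $2^{-n}$ as a weighted average of translated rescalings of itself at much finer scales, giving the measure the structure of a long convolution of rescaled copies of atomic distributions supported on translates $\{S_{\mathbold{\lambda_k}}(0)\}$. If the (lower) entropy dimension of $\mu_{\mathbold{p}}$ were strictly below $\min(s(\mathbold{p}),1)$, averaging this identity over many iterates would isolate, at a positive-density set of scales, a convolution step whose output entropy fails to exceed the entropy of one of its factors by nearly as much as that factor's support would allow.

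The main obstacle, and the heart of Hochman's contribution, is the \emph{inverse theorem for the entropy of convolutions in} $\mathbb{R}$: if $H_n(\mu \ast \nu) \le H_n(\nu) + \varepsilon$, then for a large proportion of scales $n' \in \{1,\dots,n\}$ the measure $\mu$ is essentially concentrated on a bounded number of level-$n'$ dyadic cells. I would apply this inverse theorem to the self-similar identity of the previous paragraph: the assumed dimension drop produces the required entropy deficit, and the theorem then forces the pushforward of the Bernoulli measure under the level-$k$ map $\mathbold{\lambda_k}\mapsto S_{\mathbold{\lambda_k}}(0)$ to be highly concentrated in windows of width $2^{-\omega(k)k}$ at infinitely many scales $k$, where $\omega(k)\to\infty$. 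A pigeonhole argument then produces distinct $\mathbold{\lambda_k}\neq \mathbold{\lambda_k'}$ with $|S_{\mathbold{\lambda_k}}(0)-S_{\mathbold{\lambda_k'}}(0)|\le 2^{-\omega(k)k}$, i.e. $-\log \gamma_k / k \to \infty$, contradicting the non-SECC hypothesis.

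The delicate technical steps will be (i) passing from the multi-step self-similarity recursion to a single convolution identity whose entropy is amenable to the inverse theorem (this requires choosing scales compatible with the individual contraction ratios $c_i$, which are generally incommensurable), and (ii) converting the inverse theorem's conclusion of ``concentration at most scales'' into the concrete geometric statement about minimum cylinder separation $\gamma_k$. The remark preceding the statement, that SECC is currently only known to occur together with exact overlaps, is not needed for the argument itself — the non-SECC assumption is what is used — but it will guide the intuition, since exact overlaps are the only mechanism by which the pigeonhole step above can fail in a robust way.
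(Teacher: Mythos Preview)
The paper does not prove this statement at all: Theorem~\ref{H1} is simply quoted from Hochman \cite[Corollary 1.2]{Ho} and used as a black box throughout Sections~\ref{sectionHaus} and~\ref{sectionBoxC}. There is therefore no ``paper's own proof'' to compare your proposal against.

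That said, your sketch is a faithful high-level outline of Hochman's original argument in \cite{Ho}: the reduction to self-similar measures via $\sup_{\mathbold{p}} s(\mathbold{p})=s$, the analysis through scale-$n$ entropies $H_n(\mu)$, the iteration of the self-similarity relation into a convolution structure, and the appeal to the inverse theorem for entropy of convolutions to convert a dimension drop into super-exponential concentration of the orbit points $S_{\mathbold{\lambda_k}}(0)$. The two technical obstacles you flag---handling incommensurable contraction ratios when aligning scales, and extracting the pointwise statement about $\gamma_k$ from the multiscale concentration conclusion---are indeed where the work lies in \cite{Ho}. Your proposal is correct as a roadmap, but be aware that filling in step (i) requires Hochman's machinery of component measures and local entropy averages, and step (ii) is not a straightforward pigeonhole: one must control the \emph{proportion} of scales at which concentration occurs and then pass to a subsequence along which $-\log\gamma_k/k\to\infty$. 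None of this is reproduced in the present paper, which treats the result as given.
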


The following properties also follow from Hochman's work for self-similar sets supported on $\mathbb{R}^{d}$, but we present them in the one-dimensional case, that will suffice in our setting. They tell us that super-exponential concentration of cylinders is a special circumstance; in fact, in some cases its presence is as uncommon as finding exact overlaps.

\begin{prop}  \label{H2} Let $B$ be a finite index set.
\begin{enumerate}
\item The family of $(t_{i})_{i \in B}$ such that $\mathcal{I} =\lbrace S_{i}(x) = c_i x+t_{i}\rbrace_{i \in B}$ has super-exponential concentration of cylinders has Hausdorff and packing dimension $\vert B\vert - 1$.
\item If all the parameters $ \lbrace c_i, t_{i} \rbrace_{i \in B}$ are algebraic, then $\mathcal{I}$ has super-exponential concentration of cylinders if and only if there is an exact overlap, that is, if and only if $\gamma_{k}= 0$ for some $k$.\\
\end{enumerate}
\end{prop}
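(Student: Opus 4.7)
The plan is to treat the two parts separately: a slab-covering estimate for the dimension bound in (1), and a Liouville-type inequality for the algebraic characterization in (2). Both are essentially specializations of arguments in Hochman's paper to the one-dimensional self-similar setting.

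For (1), I would rewrite the SECC set as
\[E_{\mathrm{SECC}} = \bigcap_{N\geq 1}\bigcup_{k\geq N}\bigl\{\underline{t} : \gamma_k(\underline{t})\leq e^{-Nk}\bigr\},\]
and observe that for each distinct pair $\mathbold{\lambda_k},\mathbold{\lambda'_k}\in B^k$ the map $\underline{t}\mapsto S_{\mathbold{\lambda_k}}(0)-S_{\mathbold{\lambda'_k}}(0)$ is affine in $\underline{t}$, with a linear part whose coefficients are integer combinations of products of $c_i$'s. Either this linear part vanishes identically (a degenerate case already giving an exact overlap) or the level set $\{|S_{\mathbold{\lambda_k}}(0)-S_{\mathbold{\lambda'_k}}(0)|\leq \epsilon\}$ is a slab of codimension one. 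Covering each such slab, restricted to a compact region, by balls of the appropriate radius and summing over the at most $|B|^{2k}$ pairs, one shows that for every $s>|B|-1$ and $N$ large enough (depending on $s$ and $c_{\max}$), $\sum_{k\geq N}\mathcal{H}^s_\epsilon(\{\gamma_k\leq e^{-Nk}\})\to 0$, yielding $\mathcal{H}^s(E_{\mathrm{SECC}})=0$. The same covering estimate gives the packing-dimension bound. The matching lower bound is obtained by noting that for $|B|\geq 2$, the words $(i,j)$ and $(j,i)$ have equal contraction $c_ic_j$, so $S_{(i,j)}=S_{(j,i)}$ reduces to a single affine equation in $\underline{t}$, whose solution is an affine hyperplane of dimension $|B|-1$ of exact overlaps, entirely contained in $E_{\mathrm{SECC}}$.

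For (2), the direction ``exact overlap $\Rightarrow$ SECC'' is immediate: if $S_{\mathbold{\lambda}}=S_{\mathbold{\lambda'}}$ for some distinct $\mathbold{\lambda},\mathbold{\lambda'}$, then concatenating both words with any common suffix yields $\gamma_k=0$ for all sufficiently large $k$, so $-\log\gamma_k/k=+\infty$. For the converse under the algebraic hypothesis, let $K=\mathbb{Q}(c_i,t_i:i\in B)$, a number field of degree $d<\infty$. Each difference $\alpha_k:=S_{\mathbold{\lambda_k}}(0)-S_{\mathbold{\lambda'_k}}(0)$ lies in $K$ and is a sum of at most $2k$ monomials of degree at most $k$ in the defining algebraic numbers; standard height estimates therefore give $H(\alpha_k)\leq C^k$ for a constant $C$ depending only on the generators. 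The Liouville inequality then supplies the dichotomy: either $\alpha_k=0$, or $|\alpha_k|\geq c\,H(\alpha_k)^{-(d-1)}\geq c\,C^{-k(d-1)}$. Consequently $\gamma_k\in\{0\}\cup [c'\,e^{-C'k},\infty)$, so $-\log\gamma_k/k$ is either $+\infty$ or bounded; SECC then forces $\gamma_k=0$ for some $k$, which by the convention above constitutes an exact overlap.

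The hardest step will be making the slab-covering estimate in (1) uniform in $k$: because the linear part of $\underline{t}\mapsto S_{\mathbold{\lambda_k}}(0)-S_{\mathbold{\lambda'_k}}(0)$ has norm comparable to a power of $c_{\max}$, the effective slab thickness degrades with $k$, and one must take $N$ large enough in terms of $\log(1/c_{\max})$ to absorb this. Controlling the possibility of arbitrarily small but nonzero linear-part norms across all pairs of words of length $k$ is the technical heart of Hochman's original argument, and what one must essentially replicate here.
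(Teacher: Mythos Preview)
Your proposal is correct. The paper's own proof is far terser: for the upper bound in (1) it simply cites \cite[Theorem~1.10]{Ho2}, and for (2) it cites \cite[Theorem~1.5]{Ho}, whereas you sketch the slab-covering and Liouville-type arguments that lie behind those citations (and correctly flag the uniform control on the norms of the linear parts as the technical crux). For the lower bound in (1) the paper uses the slightly simpler diagonal hyperplane $\{\underline t : t_i=t_j\}$ for any fixed $i\neq j$: on it one has $S_{(w,i)}(0)=S_w(t_i)=S_w(t_j)=S_{(w,j)}(0)$ for every word $w$, so $\gamma_k=0$ for all $k$ and SECC holds. Your choice of the exact-overlap hyperplane $\{S_{(i,j)}=S_{(j,i)}\}$, i.e.\ $(1-c_j)t_i=(1-c_i)t_j$, works just as well. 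In short, the two proofs differ only in that yours unpacks what the paper leaves to citation.
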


\begin{proof}
 Let $E$ be the set of parameters  in $\mathbb{R}^{\vert B \vert}$ for with the corresponding IFSs have SECC. Then, for any $i\neq j \in B$, the set $E$ contains the hyperplane $\lbrace \underline{t} :t_i= t_j\rbrace$, and therefore $ \dim_H (E) \geq \vert B \vert -1$. The upper bound follows from \cite[Theorem 1.10]{Ho2}. The second property is \cite[Theorem 1.5]{Ho}.
\end{proof}

We include now a lemma which allows approximations of the dimension
of a self-similar \textit{homogeneous} (i.e. all contraction ratios are the same) system with overlaps by subsystems without overlaps. We say that an IFS $\lbrace S_{i}\rbrace_{i \in B}$ with attractor $F$ satisfies the \textbf{strong separation condition (SSC)} if $S_{i}(F)\cap S_{i'}(F) = \emptyset $ for all distinct $i, i' \in B$.\\

\begin{lemma} \cite[Lemma 6.3]{FS}\label{vitali}
Let $\lbrace S_{i}\rbrace_{i \in B}$ be an IFS of similarities on $[0,1]$, each with the same contraction ratio $a \in (0, 1)$, and with self-similar attractor $F$ having Hausdorff and box-counting dimension $\alpha$, and let $\epsilon > 0$. Then there exists $\ell_{0} \in \mathbb{N}$ such that for all $\ell \geq \ell_{0}$ there exists a subsystem corresponding to a subset $B_{\ell} \subseteq B^{\ell}$ which satisfies the SSC and 
\begin{equation*} \label{eqvitali}
\vert B_{\ell} \vert \geq 3^{-\alpha}a^{-\ell(\alpha-\epsilon)}.
\end{equation*}
\end{lemma}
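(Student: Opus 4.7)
The plan is to perform a straightforward Vitali-type extraction at each level $\ell$. The key observation is that all cylinder intervals at level $\ell$ have the common length $a^\ell$, so a greedy selection (equivalently, the finite Vitali covering lemma) immediately produces a disjoint subfamily whose $3$-enlargements still cover whatever the original family covered.

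Concretely, for each $\ell \in \mathbb{N}$ I would consider the family of closed intervals
\[I_{\mathbold{\lambda_\ell}} := S_{\mathbold{\lambda_\ell}}([0,1]), \qquad \mathbold{\lambda_\ell} \in B^\ell,\]
each of length $a^\ell$, whose union clearly contains $F$. Applying the finite Vitali covering lemma to this equal-length family yields a subset $B_\ell \subseteq B^\ell$ such that the intervals $\{I_{\mathbold{\lambda_\ell}}\}_{\mathbold{\lambda_\ell} \in B_\ell}$ are pairwise disjoint, while the threefold concentric enlargements $\{3 I_{\mathbold{\lambda_\ell}}\}_{\mathbold{\lambda_\ell} \in B_\ell}$ still cover $\bigcup_{\mathbold{\lambda_\ell} \in B^\ell} I_{\mathbold{\lambda_\ell}} \supseteq F$. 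Since $S_{\mathbold{\lambda_\ell}}(F) \subseteq I_{\mathbold{\lambda_\ell}}$, the disjointness of the chosen intervals immediately yields the SSC for the subsystem $\{S_{\mathbold{\lambda_\ell}}\}_{\mathbold{\lambda_\ell} \in B_\ell}$.

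For the cardinality bound, note that the enlarged intervals form a cover of $F$ by $|B_\ell|$ sets of diameter $3a^\ell$, so the covering number satisfies $N(F, 3a^\ell) \leq |B_\ell|$. Since $\dim_B F = \alpha$, for any $\epsilon > 0$ there exists $\ell_0$ such that $N(F, \delta) \geq \delta^{-(\alpha - \epsilon)}$ whenever $\delta \leq 3 a^{\ell_0}$. Taking $\delta = 3a^\ell$ for $\ell \geq \ell_0$ gives
\begin{equation*}
|B_\ell| \geq N(F, 3a^\ell) \geq (3 a^\ell)^{-(\alpha - \epsilon)} = 3^{-(\alpha - \epsilon)} a^{-\ell(\alpha - \epsilon)} \geq 3^{-\alpha} a^{-\ell(\alpha - \epsilon)},
\end{equation*}
which is the desired bound.

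I expect no substantive obstacle: the argument is essentially the finite Vitali lemma applied to a family of equal-length intervals, combined with the very definition of box-counting dimension. The hypothesis $\dim_B F = \alpha$ enters only in the last step to produce the lower bound on $N(F, 3a^\ell)$, and the factor $3^{-\alpha}$ in the conclusion simply absorbs the $3$-enlargement factor from the Vitali extraction.
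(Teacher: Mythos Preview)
The paper does not supply its own proof of this lemma; it is quoted verbatim from \cite[Lemma~6.3]{FS} and used as a black box. Your argument is correct and is precisely the intended one (note the label \texttt{vitali} the author gave the lemma): a greedy/Vitali extraction from the equal-length cylinder intervals $S_{\mathbold{\lambda_\ell}}([0,1])$ gives a pairwise disjoint subfamily, hence SSC for the corresponding subsystem, and the $3$-enlargements still cover $F$, so the box-dimension hypothesis yields the cardinality bound with the stated $3^{-\alpha}$ factor.
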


\noindent Note that the $\ell$ appearing in $B_{\ell}$ indicates dependence on $\ell$, while the $\ell$ appearing in $B^{\ell}$ denotes, as usual, the Cartesian product of $\ell$ copies of $B$.\\ 

\section{Calculation of the Hausdorff dimension}\label{sectionHaus}

We start by introducing what will be the target dimension. Let $d=|D|$ be the cardinal of the set $D$, and consider the spaces of probability vectors 
\[\mathbb{P}^{\vert D \vert}= \bigg \{ (p_l)_{l=1}^{d} \in \mathbb{R}^{d} :  p_1,\ldots, p_d \geq 0, \sum_{l=1}^{d} p_l =1 \bigg\} \quad \text{and} \quad \mathbb{P}_+ ^{\vert D \vert}= \bigg \{ (p_l)_{l=1}^{d} \in \mathbb{P} ^{\vert D \vert} :  p_1,\ldots, p_d >0 \bigg\}.  \]

Then each $\textbf{p}\in\mathbb{P}^{|D|}$ induces a measure on the carpet by assigning a probability $p_{l}$ to each $S_{\underline{t},(i_l,j_l)}([0,1]^{2})$, for $l=1, \ldots ,d$, where a bijection of the set $D$ and the naturals $1, \ldots, d$ has been defined. For convenience we will show the dependence of the probability on the pairs $(i,j)$ by using the notation $p_{ij}$ for the coordinates of $\textbf{p}$. For $1\leq i\leq m$ and for $1\leq j\leq n$ let
$$R_i(\textbf{p})=\sum_{(i,j) \in I_i} p_{ij} \qquad \qquad  S_j(\textbf{p})=\sum_{(i,j) \in J_j} p_{ij}$$
be the respective total probabilities in a column $i$ or row $j$, and consider the subsets of $\mathbb{P}^{|D|}$:
\begin{equation*}
\mathcal{S_{A}}= \bigg\{ \textbf{q} \in\mathbb{P}^{|D|} : \sum_{i=1}^m R_i(\textbf{q})\log a_i \geq \sum_{j=1}^n S_j(\textbf{q})\log b_j         \bigg \}
\end{equation*}

\begin{equation*}
\mathcal{S_{B}}= \bigg\{ \textbf{q} \in\mathbb{P}^{|D|} : \sum_{i=1}^m R_i(\textbf{q})\log a_i \leq \sum_{j=1}^n S_j(\textbf{q})\log b_j         \bigg\}.
\end{equation*}
For any $\textbf{p}\in\mathbb{P}^{|D|}$, define 
\begin{equation*}
   g(\textbf{p})=\left\lbrace
  \begin{array}{l}
     \dfrac{\sum_{i=1}^m R_i(\textbf{p})\log R_i(\textbf{p})}{\sum_{i=1}^m R_i(\textbf{p})\log a_i} +   \dfrac{\sum_{ i=1}^{m} \sum_{(i,j) \in I_i} p_{ij}\log\left(\frac{p_{ij}}{R_i(\textbf{p})}\right)}{\sum_{j=1}^n S_j(\textbf{p})\log b_j} \quad \text{ if } \textbf{p} \in \mathcal{S_{A}}
      \\[15pt]
     
     \dfrac{\sum_{j=1}^n S_j(\textbf{p})\log S_j(\textbf{p})}{\sum_{j=1}^n S_j(\textbf{p})\log b_j}+ \dfrac{\sum_{ j=1}^{n} \sum_{(i,j) \in J_j} p_{ij}\log\left(\frac{p_{ij}}{S_j(\textbf{p})}\right)}{\sum_{i=1}^m R_i(\textbf{p})\log a_i}  \quad \text{ if } \textbf{p} \in \mathcal{S_{B}}
     \setminus \mathcal{S_{A}}. \\
  \end{array}
  \right.
\end{equation*}\\

Note that the function $g$ is well defined (the denominators are non-zero) and continuous, since both sub-functions are continuous and agree in $\mathbold{p}\in (\mathcal{S_{A}}
     \cap \mathcal{S_{B}})$. For more details see \cite[pages 225-226]{Baranski}.

\subsection{Upper bound} \label{subsecupperH}
Intuitively, overlaps shouldn't increase the Hausdorff dimension, so it is licit to expect that Bara\'{n}ski's argument to obtain an upper bound for his original carpets will apply to our setting. Indeed this is the case, so in this subsection we adapt his proof to our construction, pointing out the necessary changes in his proofs. For comparison and detailed proofs we remit the reader to \cite[Sections 3-5]{Baranski}.\\

\noindent Firstly, recall that in Section \ref{measures} we defined $T_{\mathbold{\lambda_{k}}}$ and $L_{\mathbold{\lambda_{k}}}$ as the respective minimum and maximum side-lengths of the rectangle $\Delta_{\underline{t},\mathbold{\lambda_{k}}}$. \\

\begin{defn} For each $k>1$ and fixed $\mathbold{\lambda_{k}} \in D^{k}$, let $\mathbold{\lambda_{l}}=(\lambda_1,\ldots \lambda_l)$ for all $1\leq l\leq k$. Define
\[ M := M(k)=M_{\mathbold{\lambda_{k}}}=\min \lbrace l \leq k : T_{\mathbold{\lambda_{l}}} \leq L_{\mathbold{\lambda_{k}}} \rbrace, \]
where the notations $M$, $M(k)$ will be used when it is clear to which $\mathbold{\lambda_{k}}$ they are associated with.\\
\end{defn}

\noindent Observe that the set $\Delta_{\underline{t},\mathbold{\lambda_{M(k)}}}$ is the first set of the nested sequence $ \big \{ \Delta_{\underline{t},\mathbold{\lambda_{l}}} \big \}_{l=1}^{k}$ for which its shortest edge becomes equal or smaller than the largest edge of $\Delta_{\underline{t},\mathbold{\lambda_{k}}}$. See Figure \ref{fig:apbaran}.

\begin{defn}\cite[Definition 4.3]{Baranski}\label{defU} For a fixed $\mathbold{\lambda_k} \in D^{k}$, let
\[ U_{\mathbold{\lambda_{k}}}=\bigcup \big \{ \mathcal{C}_{\mathbold{\lambda'_{k}}} : \mathbold{\lambda'_{k}} \sim \mathbold{\lambda_{k} \text{ and }  \vert \mathbold{\lambda'_{k}}} \wedge \mathbold{\lambda_{k}} \vert > M_{\mathbold{\lambda_{k}}}   \big \}. \]
Then its \textbf{approximate square} of generation k is defined as
\[\mathcal{Q}_{\mathbold{\lambda_{k}}} = \bigcup \big \{ \Delta_{\underline{t},\mathbold{\lambda'_{k}}} : \mathcal{C}_{\mathbold{\lambda'_{k}}} \subset U_{\mathbold{\lambda_{k}}}  \big \}.  \]
\end{defn}

That is, $U_{\mathbold{\lambda_{k}}}$ is the union of all cylinders of sequences of the same type as $\mathbold{\lambda_{k}}$ that have in common at  least the first $ M_{\mathbold{\lambda_{k}}}$ terms. In other words, the image of all such cylinders, denoted by $\mathcal{Q}_{\mathbold{\lambda_{k}}}$, is contained in $\Delta_{\underline{t},M_{\mathbold{\lambda_{k}}}}$, and since all sequences are of the same type, their corresponding $\Delta_{\underline{t},\mathbold{\lambda'_{k}}}$ will be aligned and have the same width or height, depending on whether they are $A$ or $B$-sequences. See Figure \ref{fig:apbaran}. 

\begin{figure}[h]
	\centering
		\includegraphics[scale=.5]{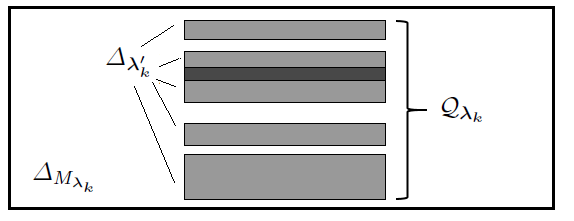}
		 \caption{Definition of an approximate square $\mathcal{Q}_{\mathbold{\lambda_{k}}}$.}
  \label{fig:apbaran}
\end{figure}

\begin{rem}\label{remaprox}
By the definition of $M_{\mathbold{\lambda_{k}}}$, the shortest edge of $\Delta_{\underline{t},M_{\mathbold{\lambda_{k}}}}$ has length $T_{\mathbold{\lambda_{M}}}$, that differs from the longest edge (of length $L_{\mathbold{\lambda_{k}}}$) of the sets $\Delta_{\underline{t},\mathbold{\lambda'_{k}}}$s by a constant, and thus makes it licit to call $\mathcal{Q}_{\mathbold{\lambda_{k}}}$ an approximate square. \\
\end{rem}

\begin{rem}\label{remAdapt} Note that the original Bara\'{n}ski system and our overlapping one have the same number of maps, and thus any argument regarding only symbolic dynamics of the first will also apply in our case. In particular, the number of rectangles in an approximate square is independent of the possible overlapping. The only difference might appear when inducing measures in the attractor of the system. As we pointed out in Section \ref{measures}, the coding function $\Pi_{\underline{t}}$ is not necessarily injective, and thus, by pushing forward we don't get a measure equality but the inequality (\ref{measureF}) instead, although this will suffice for our purposes.\\ 
\end{rem}

We proceed now to sketch the proof to get an upper bound following \cite[Proof of Theorem A]{Baranski}. The goal is to induce a measure in the attractor from an appropriate Bernoulli measure on the symbolic space, for which we can bound the measure of its cylinders. Then, that bound will allow us to use Frostman's Lemma. We have stated it here on a simpler form that will suffice in our case, but a more general version and proof can be found for example in \cite[Theorem 8.8]{Mattila}.
\begin{FROS}Let $\mu$ be a finite Borel measure in $\mathbb{R}^{n}$ and let $A\subseteq \mathbb{R}^{n}$. If for all $x \in A$ 
\begin{equation*}
\liminf_{\delta\rightarrow 0}\frac{\log(\mu_{\textbf{p}}(\mathbb{D}_{\delta}(x))}{ \log \delta}\leq d, \qquad \text{then} \qquad \dim_H(A)\leq d.
\end{equation*}
\end{FROS}
\medskip

In the proof of the following proposition we will work with vectors with positive coordinates. Observe that since $\mathbb{P}_{+}^{\vert D \vert}$ is dense in $\mathbb{P}^{\vert D \vert} \subset \mathbb{R}^{\vert D \vert}$ and the latter is a separable space, there exists a countable set, for example by considering rational coordinates, of vectors 
\begin{equation*}
\lbrace P_L \rbrace_{L=1}^{\infty}  \subset \mathbb{P}_{+}^{\vert D \vert}  \qquad  \text{such that } \qquad \lbrace P_L \rbrace_{L=1}^{\infty} \text{ is dense in } \mathbb{P}^{\vert D \vert}.
\end{equation*}

\begin{prop}\label{upperH} For all $\underline{t} \in A$ it holds 
$dim_{H} F_{\underline{t}}\leq \max\limits_{\textbf{p}\in\mathbb{P}^{|D|}} g(\textbf{p})$.
\end{prop}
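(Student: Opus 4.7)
The plan is to construct a Borel probability measure $\mu$ on $F_{\underline{t}}$ whose lower local dimension is pointwise bounded above by $\max_{\textbf{p}} g(\textbf{p})$, and then invoke Frostman's Lemma with $A = F_{\underline{t}}$. The argument adapts Bara\'nski's proof for non-overlapping carpets \cite[Sections 3--5]{Baranski}; since overlaps only reduce pushforward mass, the scheme carries over almost verbatim with equality (\ref{measureF}) replaced by the corresponding inequality, as anticipated in Remark \ref{remAdapt}.

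For each $\textbf{p} \in \mathbb{P}_+^{|D|}$ form the Bernoulli measure $\nu_\textbf{p}$ (Definition \ref{Bermeasures}) and its pushforward $\mu_\textbf{p} := \nu_\textbf{p} \circ \Pi_{\underline{t}}^{-1}$, and define
\[
\mu := \sum_{L=1}^{\infty} 2^{-L}\, \mu_{P_L}
\]
using the dense family $\{P_L\}_{L=1}^{\infty} \subset \mathbb{P}_+^{|D|}$ fixed before the proposition; this $\mu$ is again a Borel probability measure on $F_{\underline{t}}$. The crucial local estimate reads: if $\mathbold{\lambda_{k}}$ is an $A$-sequence with $M := M_{\mathbold{\lambda_{k}}}$, Definition \ref{defU} together with (\ref{measureF}) gives
\[
\mu_\textbf{p}(\mathcal{Q}_{\mathbold{\lambda_{k}}}) \;\geq\; \nu_\textbf{p}(U_{\mathbold{\lambda_{k}}}) \;=\; \prod_{l=1}^{M} p_{\lambda_l} \prod_{l=M+1}^{k} R_{i_l}(\textbf{p}),
\]
with the symmetric formula using $S_j(\textbf{p})$ for $B$-sequences. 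Taking logarithms, dividing by $\log L_{\mathbold{\lambda_{k}}}$ (which by Remark \ref{remaprox} is comparable to the logarithm of the diameter of $\mathcal{Q}_{\mathbold{\lambda_{k}}}$), and rewriting the sums in terms of the empirical frequencies along $\mathbold{\lambda_{k}}$ and of $M/k$, the strong law of large numbers combined with the asymptotic identity $M/k \to \chi_A(\textbf{p})/\chi_B(\textbf{p})$, where $\chi_A(\textbf{p}) := \sum_i R_i(\textbf{p})\log a_i$ and $\chi_B(\textbf{p}) := \sum_j S_j(\textbf{p})\log b_j$, produces the pointwise inequality
\[
\liminf_{k \to \infty} \frac{\log \mu_\textbf{p}(\mathcal{Q}_{\mathbold{\lambda_{k}}})}{\log L_{\mathbold{\lambda_{k}}}} \;\leq\; g(\textbf{p}) \qquad \text{for }\nu_\textbf{p}\text{-a.e. }\mathbold{\lambda};
\]
this is precisely the computation producing Bara\'nski's two branches of $g$ on $\mathcal{S_{A}}$ and $\mathcal{S_{B}}$.

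To extend the bound from $\mu_\textbf{p}$-generic points to every $x \in F_{\underline{t}}$, I pick a coding $\mathbold{\lambda}(x)$ of $x$. By compactness of $\mathbb{P}^{|D|}$ and a diagonal extraction applied simultaneously to the global empirical frequencies of $\mathbold{\lambda}(x)$ and to those on the sub-prefixes $[1,M_{\mathbold{\lambda_{k}}}]$ and $(M_{\mathbold{\lambda_{k}}},k]$, there is a subsequence $k_j$ along which all relevant frequencies converge, with global limit some $\textbf{p}^{*}(x) \in \mathbb{P}^{|D|}$. Given $\epsilon > 0$, continuity of $g$ and density of $\{P_L\}$ in $\mathbb{P}_+^{|D|}$ let us choose $L$ with $P_L$ so close to $\textbf{p}^{*}(x)$ that the ratio above, recomputed with $\textbf{p} = P_L$ along $k_j$ for the fixed sequence $\mathbold{\lambda}(x)$, is at most $g(\textbf{p}^{*}(x)) + \epsilon \leq \max_{\textbf{q}\in\mathbb{P}^{|D|}}g(\textbf{q}) + \epsilon$. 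Because $\mu \geq 2^{-L}\mu_{P_L}$ and, by Remark \ref{remaprox}, each $L_{\mathbold{\lambda_{k_j}}}$-scale approximate square is contained in a Euclidean disc of comparable radius around $x$, this gives $\liminf_{\delta \to 0}\log\mu(\mathbb{D}_\delta(x))/\log\delta \leq \max_{\textbf{q}}g(\textbf{q}) + \epsilon$. Applying Frostman's Lemma with $A = F_{\underline{t}}$ and letting $\epsilon \downarrow 0$ finishes the proof.

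The main obstacle is the last step: justifying that the limiting ratio on the fixed deterministic sequence $\mathbold{\lambda}(x)$, evaluated with the ``wrong'' Bernoulli parameter $P_L$ in place of the true empirical limit $\textbf{p}^{*}(x)$, is still within $\epsilon$ of $g(\textbf{p}^{*}(x))$. Since every factor appearing in $\nu_{P_L}(U_{\mathbold{\lambda_{k_j}}})$ depends continuously on $P_L$ and $P_L \in \mathbb{P}_+^{|D|}$ keeps every logarithm finite, this reduces to a parameter-continuity check; it is precisely for this reason that the countable dense family is chosen inside $\mathbb{P}_+^{|D|}$ rather than inside $\mathbb{P}^{|D|}$.
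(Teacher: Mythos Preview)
Your proposal is correct and follows essentially the same route as the paper: both arguments rest on Bara\'nski's symbolic estimate that, for every coding $\mathbold{\lambda}$ and every $\epsilon>0$, one can pick $P_L$ from the fixed countable dense family so that the ratio $\log\nu_{P_L}(U_{\mathbold{\lambda_k}})/\log L_{\mathbold{\lambda_k}}$ is eventually at most $\max_{\textbf{p}} g(\textbf{p})+\epsilon$, together with the inclusion $\mathcal{Q}_{\mathbold{\lambda_k}}\subset\mathbb{D}_{\sqrt{2}L_{\mathbold{\lambda_k}}}(x)$ and the overlap inequality (\ref{measureF}). The only difference is in the final packaging: the paper partitions $F_{\underline t}$ into the sets $F_{\underline t,L}$ according to which $P_L$ is selected, applies Frostman's Lemma to each piece with the single measure $\mu_{P_L}$, and concludes by countable stability of Hausdorff dimension, whereas you form the mixture $\mu=\sum_L 2^{-L}\mu_{P_L}$ and apply Frostman once; since $\mu\ge 2^{-L}\mu_{P_L}$ and the additive constant $-L\log 2$ vanishes after dividing by $\log\delta$, this is a clean equivalent shortcut.
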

\begin{proof}
Fix a small $\epsilon>0$ and let $ (x,y) \in F_{\underline{t}}$. We know by equation (\ref{limitx}) that there exists at least one sequence $\mathbold{\lambda}=\lim_{k\rightarrow \infty}\mathbold{\lambda_{k}}$ such that $(x,y)= \bigcap_{k}
\Delta_{\underline{t},\mathbold{\lambda_{k}}}$. Consider any other sequence $\mathbold{\lambda'_{k}}$ such that $\mathcal{C}_{\mathbold{\lambda'_{k}}}\subset U_{\mathbold{\lambda_{k}}}$. Note that by definition, the longest length of an edge of  $\Delta_{\underline{t},\mathbold{\lambda'_{k}}}$ is $L_{\mathbold{\lambda_{k}}}$,  we have the inclusion $\Delta_{\underline{t},\mathbold{\lambda'_{k}}} \subset\Delta_{\underline{t},M_{\mathbold{\lambda_{k}}}}$, and the 
smallest edge of $\Delta_{\underline{t},M_{\mathbold{\lambda_{k}}}}$ has length $T_{\mathbold{\lambda_{M}}}\leq L_{\mathbold{\lambda_{k}}}$. Thus, we have that all $\Delta_{\underline{t},\mathbold{\lambda'_{k}}}$ are contained in a square of side $L_{\mathbold{\lambda_{k}}}$; see Figure \ref{fig:apbaran}. Therefore, 
$$\mathcal{Q}_{\mathbold{\lambda_{k}}} \subseteq \mathbb{D}_{\sqrt{2}L_{\mathbold{\lambda_{k}}}}((x,y)),$$
and so, using equation (\ref{measureF}), for any $\textbf{p} \in \mathbb{P}^{|D|}$ we have
\begin{equation*}\label{eqmeasures}
\mu_{\textbf{p}}(\mathbb{D}_{\sqrt{2}L_{\mathbold{\lambda_{k}}}}((x,y)) \geq(\nu_{\textbf{p}}\circ \Pi_{\underline{t}}^{-1})( \mathcal{Q}_{\mathbold{\lambda_{k}}}) \geq \nu_{\textbf{p}}( U_{\mathbold{\lambda_{k}}}),
\end{equation*}
and in particular, for all $k$ 
\begin{equation}\label{bdh}
\frac{\log(\mu_{\textbf{p}}(\mathbb{D}_{\sqrt{2}L_{\mathbold{\lambda_{k}}}}((x,y))) }{ \log ( \sqrt{2}L_{\mathbold{\lambda_{k}}})} \leq \frac{\log(\sqrt{2} L_{\mathbold{\lambda_{k}}})-\log\sqrt{2}}{\log (\sqrt{2} L_{\mathbold{\lambda_{k}}})}\cdot\frac{\log \nu_{\textbf{p}}(U_{\mathbold{\lambda_{k}}})}{\log L_{\mathbold{\lambda_{k}}}} .
\end{equation}

In \cite[Proof of Theorem A, pages 233-235]{Baranski}, using purely symbolic arguments, the author finds an upper bound for the limit on $k$ of the latter term in equation (\ref{bdh}). More specifically, he concludes that for every $\mathbold{\lambda} \in D^{\mathbb{N}}$ and each $\epsilon>0$, there exists $\mathbold{p_{_{\mathbold{L(\lambda)}}}} \in \lbrace P_L \rbrace_{L=1}^{\infty} \subset \mathbb{P}_{+}^{\vert D \vert}$ such that
\begin{equation}\label{new2}
 \lim_{k\rightarrow \infty} \frac{\log \nu_{\mathbold{p_{_{\mathbold{L(\lambda)}}}}}(U_{\mathbold{\lambda_{k}}})}{\log L_{\mathbold{\lambda_{k}}}} \leq \max_{\textbf{p}\in\mathbb{P}^{|D|}}g(\textbf{p})+\epsilon.
\end{equation}
By Remark \ref{remAdapt}, such estimate also holds in our case, and this together with equation (\ref{bdh}) gives
\begin{equation*}
\liminf_{\delta\rightarrow 0}\frac{\log(\mu_{\mathbold{p_{_{\mathbold{L(\lambda)}}}}}(\mathbb{D}_{\delta}((x,y)))}{ \log \delta}\leq \max_{\textbf{p}\in\mathbb{P}^{|D|}}g(\textbf{p})+\epsilon.
\end{equation*}

This inequality is valid for all the points in our fractal to which the same vector $P_L$ has been assigned in equation (\ref{new2}). That is, for all points of the set $$F_{\underline{t},L}:=\{(x,y)\in F_{\underline{t}}: \exists\, \mathbold{\lambda} \text{ such that }  \Pi_{\underline{t}}(\mathbold{\lambda})=(x,y) \text{ and } \mathbold{p_{_{\mathbold{L(\lambda)}}}}=\mathbold{p_{_{L}}} \} \text{  for } L=1,2, \ldots. $$
Then $\dim_H (F_{\underline{t},L})\leq \max_{\mathbb{P}^{|D|}} g+\epsilon$ by Frostman's Lemma, so $\dim_H (F_{\underline{t}})=\dim_H(\bigcup_{L=1}^{\infty}F_{\underline{t},L})\leq \max_{\mathbb{P}^{|D|}} g+\epsilon$  by countable stability of the Hausdorff dimension. As $\epsilon$ can be as small as desired, we get $\dim_H(F_{\underline{t}})\leq \max_{\mathbb{P}^{|D|}} g$, which concludes the proof.
\end{proof}

\subsection{Lower bound} \label{sectionLower}
Let $F$ be the attractor of a fixed Bara\'{n}ski system. Our goal in this section is to ensure that $\dim_H (F)=\dim_H(F_{\underline{t}})$ holds for as many attractors $F_{\underline{t}}$, or equivalently as many parameters $\underline{t}\in A$, as possible.
The first step towards this target is to approximate each system in our parametric family of carpets by a sequence $\mathcal{I}_k$ of (possibly overlapping) Bedford-McMullen-type systems with uniform fibres (an idea already used in \cite{proy}). The reason for this is that the property of having uniform fibers will become very useful when getting estimates for the number of maps on further approximations of the system.\\

To each $\mathcal{I}_k$ we associate a number $s_k$ that we prove in Lemma \ref{sktogp} to provide, as $k$ tends to infinity, increasingly good approximations of $\dim_H(F)$. However, we will only be able to guarantee them to be lower bounds of $\dim_H(F_{\underline{t}})$ for certain parameters $\underline{t}$. We define in (\ref{setE}) the set $E$ of ``invalid'' parameters, and for all $\underline{t} \in A \setminus E$, in Lemma \ref{G21} we perform new approximations to subsystems $\mathcal{L}_{\ell}$ by dropping ``not too many'' maps after further iterations of those in $\mathcal{I}_k$. \\

The resulting systems $\mathcal{L}_{\ell}$ with attractors $\Upsilon_{\ell}$ will be free of overlapping rows but they will possibly have columns overlaps. However, since the corresponding projected system on the X-axis does not have SECC, Fraser-Shmerkin's Theorem provides us with a formula for $\dim_H(\Upsilon_\ell)$ in terms of the number of elements on each of its columns. Although we will not know exactly such numbers, the total number of maps of the subsystem is big enough to guarantee that the target dimension is reached for any column distribution of the rectangles.\\

\noindent Let $\textbf{p}$ be the probability vector for which $F$ satisfies $$\dim_H (F)=\max_{\textbf{q}\in\mathbb{P}^{|D|}}g(\textbf{q})= g(\textbf{p}),$$ and let $p_{ij}$ be its coordinates. Without loss of generality we may assume $\textbf{p} \in \mathcal{S}_{A}$ (the other case is symmetric). For $k\in\mathbb{N}$, set $\theta(k) = \sum_{(i,j)\in D} \lceil k p_{i j} \rceil$, and let
\begin{equation*}
 \Gamma_k= \Bigg\{
 \begin{tabular}{c}
 $\mathbold{\lambda_{k}}=(\lambda_{1},\lambda_{2},\ldots ,\lambda_{\theta(k)})\in D^{\theta(k)} : \text{ for all } (i,j) \in D,$ \\
 $ |\lbrace l \in \lbrace 1, \ldots,\theta(k) \rbrace : \lambda_{l}=(i,j) \rbrace|= \lceil k p_{i j}\rceil  $
 \end{tabular}
 \Bigg \},
\end{equation*}
i.e. $\Gamma_k$ is the set of all strings of length $\theta(k)$ over the alphabet $D$ for which the number of occurrences of the pair $(i,j)$ is equal to $\lceil k p_{i j} \rceil$. For each $\underline{t}\in A$ the set $\Gamma_k$ defines an IFS 
\begin{equation}\label{BMcarpet}
\mathcal{I}_k:=\big \{ S_{\underline{t},\mathbold{\lambda_{k}}}\big \}_{\mathbold{\lambda_{k}} \in \Gamma_k}
\end{equation}
with uniform fibres. To see this, let us think of each line (row or column) generated by $\mathcal{I}_k$ as an equivalence class. Then, $S_{\underline{t},\mathbold{\lambda_{k}}}$ and  $S_{\underline{t},\mathbold{\lambda_{k}'}}$ generate elements in the same line when $\lambda_{l}$  and $\lambda'_{l}$ are in the same line for all $1\leq l \leq \theta(k)$. The number of elements on each equivalence class is given by the product of possibilities for each coordinate. But since each element is repeated the same number of times on a sequence $\mathbold{\lambda_{k}}$, the final product is the same for all equivalent classes, that is, all lines have the same number of elements.\\

Besides, note that the number of elements in $\Gamma_{k}$ equals the number of all possible permutations with indistinguishable repetition of the pairs $(i,j) \in D$, each repeated $\lceil k p_{i j} \rceil$ times. Also, in order to get the cardinal of the set  
\begin{equation*}
\overline{\Gamma}^{X}_k:=\big \{(i_1, \ldots, i_{\theta(k)}) : ((i_1,j_1), \ldots, (i_{\theta(k)},j_{\theta(k)})) \in \Gamma_k \text{ for some }j_1, \ldots j_{\theta(k)}\big \}, 
\end{equation*}
that is, the projection of $\mathcal{I}_k$ onto the horizontal axis, we can think of all elements in a column as being identified, and then consider permutations of the columns $i$ with repetition numbers $\sum_{j \in I_i} \lceil k p_{i j} \rceil$. Therefore,

\begin{equation} \label{cardinals}
| \Gamma_k |=\dfrac{\theta(k)!}{\prod_{(i,j)\in D} \lceil k p_{i j}\rceil !} \qquad \qquad \vert \overline{\Gamma
}^{X}_{k} \vert =\dfrac{\theta(k)!}{\prod_{i \in \overline{D}_{X}} \left(\sum_{(i,j)\in I_i} \lceil k p_{i j} \rceil \right)!} .
\end{equation}\\

\noindent Let us denote the attractor associated to $\mathcal{I}_k$ by  $\Lambda_k$. By construction $\Lambda_k \subset F_{\underline{t}}$, and the linear part of each map on $\mathcal{I}_k$ is given by
\[ \text{diag}\left( \prod_{(i,j)\in D} a_{i}^{\lceil k p_{i j} \rceil},\prod_{(i,j)\in D} b_{j}^{ \lceil k p_{i j} \rceil} \right)=:\text{diag}(m_k^{-1},n_k^{-1}).\\ \] 

Since $m_k$ and $n_k$ are not necessarily integers, we have that $\Lambda_k$ is a Bedford-McMullen-type carpet. A simple calculation shows that
\begin{equation}\label{pinB}
\textbf{p} \in \mathcal{S}_{A} \quad \text{implies} \quad n_k \geq m_k.
\end{equation}

\noindent Let us define
\begin{equation}\label{s_k}
s_{k}:=\frac{\log \vert\overline{\Gamma
}^{X}_{k} \vert}{ \log m_k}+ \frac{\log \vert \Gamma_k \vert-\log \vert\overline{\Gamma
}^{X}_{k} \vert}{\log n_k}.
\end{equation}

\begin{lemma}\label{sktogp}
For every $\underline{t} \in A$ there exists a sequence of Bedford-McMullen-type systems $\big \{ \mathcal{I}_{k} \big \}_k$ with uniform fibers and attractors $\Lambda_k \subset F_{\underline{t}}$ for which
\[s_k \longrightarrow \max_{\textbf{q}\in\mathbb{P}^{|D|}}g(\textbf{q} )\] 
as $k$ tends to infinity.
\end{lemma}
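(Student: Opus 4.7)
The plan is to show that the sequence $\{\mathcal{I}_k\}$ already constructed above the lemma statement does the job, so the only real content is to compute $\lim_{k\to\infty} s_k$ and verify that it equals $g(\mathbf{p})$, where $\mathbf{p}\in \mathbb{P}^{|D|}$ is a maximiser of $g$. We may assume without loss of generality that $\mathbf{p}\in\mathcal{S}_A$ (the case $\mathbf{p}\in\mathcal{S}_B\setminus\mathcal{S}_A$ is entirely symmetric, and in view of (\ref{pinB}) it amounts to swapping the roles of $m_k$ and $n_k$). Note that the discussion after (\ref{BMcarpet}) already establishes that $\mathcal{I}_k$ has uniform fibres, is of Bedford--McMullen type with parameters $(m_k,n_k)$, and, since each $S_{\underline{t},\mathbold{\lambda_k}}$ with $\mathbold{\lambda_k}\in\Gamma_k$ is a composition of maps from the original IFS, its attractor $\Lambda_k$ is contained in $F_{\underline{t}}$ regardless of $\underline{t}\in A$.

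The tool for computing $\lim s_k$ is Stirling's formula in the form $\log N! = N\log N - N + O(\log N)$. First I would record the basic asymptotics: since $\sum_{(i,j)\in D}p_{ij}=1$ we have $\theta(k)=k+O(1)$, and writing $q_i:=R_i(\mathbf{p})$ and $r_j:=S_j(\mathbf{p})$ one gets $\lceil k p_{ij}\rceil = kp_{ij}+O(1)$ and $\sum_{j:(i,j)\in I_i}\lceil kp_{ij}\rceil = kq_i+O(1)$. Applying Stirling to the two expressions in (\ref{cardinals}) and adopting the convention $0\log 0=0$ for vanishing coordinates gives
\begin{equation*}
\tfrac{1}{k}\log|\Gamma_k|\;\longrightarrow\;-\!\!\sum_{(i,j)\in D}\!\! p_{ij}\log p_{ij},\qquad \tfrac{1}{k}\log|\overline{\Gamma}^{X}_k|\;\longrightarrow\;-\sum_{i\in\overline{D}_X} q_i\log q_i.
\end{equation*}
For the denominators of (\ref{s_k}), direct computation from the definitions of $m_k$ and $n_k$ yields
\begin{equation*}
\tfrac{1}{k}\log m_k\;\longrightarrow\;-\sum_{i=1}^m q_i\log a_i,\qquad \tfrac{1}{k}\log n_k\;\longrightarrow\;-\sum_{j=1}^n r_j\log b_j,
\end{equation*}
each with an additive $O(1/k)$ error coming from the $\lceil\cdot\rceil$.

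Plugging these four limits into (\ref{s_k}) and using the elementary algebraic identity
\begin{equation*}
\sum_{i=1}^m\!\sum_{(i,j)\in I_i}\! p_{ij}\log\!\Big(\tfrac{p_{ij}}{R_i(\mathbf{p})}\Big) \;=\; \sum_{(i,j)\in D}p_{ij}\log p_{ij}\; -\;\sum_{i=1}^m R_i(\mathbf{p})\log R_i(\mathbf{p}),
\end{equation*}
one recovers precisely the expression defining $g(\mathbf{p})$ on $\mathcal{S}_A$, namely
\begin{equation*}
\lim_{k\to\infty}s_k \;=\; \frac{\sum_i R_i(\mathbf{p})\log R_i(\mathbf{p})}{\sum_i R_i(\mathbf{p})\log a_i} + \frac{\sum_i\sum_{(i,j)\in I_i}p_{ij}\log(p_{ij}/R_i(\mathbf{p}))}{\sum_j S_j(\mathbf{p})\log b_j} \;=\; g(\mathbf{p}) \;=\; \max_{\mathbf{q}\in\mathbb{P}^{|D|}} g(\mathbf{q}),
\end{equation*}
which is the claim.

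There is no real obstacle; the only point that requires some care is the uniform handling of the ceilings when $p_{ij}=0$ (those terms simply drop out of the sums) and the verification that the error terms $O(\log k)/k$ and $O(1)/k$ introduced by Stirling and by the ceilings do not contaminate the limit. Since all the denominators $\sum q_i\log a_i$ and $\sum r_j\log b_j$ are strictly negative and bounded away from $0$ (because $a_i,b_j<1$ and at least one $q_i$, one $r_j$ is positive), the asymptotics above transfer cleanly to the ratios that form $s_k$, closing the argument.
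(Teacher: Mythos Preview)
Your proof is correct and follows essentially the same approach as the paper: apply Stirling's formula to the multinomial expressions in (\ref{cardinals}) to get the limits of $\tfrac{1}{k}\log|\Gamma_k|$ and $\tfrac{1}{k}\log|\overline{\Gamma}^X_k|$, use the definition of $m_k,n_k$ to get the limits of $\tfrac{1}{k}\log m_k$ and $\tfrac{1}{k}\log n_k$, and substitute into (\ref{s_k}) to recover $g(\mathbf{p})$. Your explicit handling of the $p_{ij}=0$ case and of the error terms is a slight refinement over the paper's exposition but not a different argument.
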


\begin{proof}
We will make use a of Stirling's formula for factorials in the following version: for all $b \in \mathbb{N}\setminus \lbrace 1 \rbrace$ we have
\begin{equation} \label{stirh}
b \log b - b \leq \log b! \leq b \log b -b + \log b.\\
\end{equation}

Recall that $\textbf{p}$ is a probability vector and so $\sum_{(i,j)\in D} p_{i j}=1$. This allow us to express $\theta(k) =k \sum_{(i,j)\in D}  p_{i j} + o(k)= k + o(k),$ and for each $i \in \overline{D}_{X}$ we have that $\sum_{(i,j)\in I_i} \lceil k p_{i j} \rceil =k R_{i}(\textbf{p}) + o(k)$. Therefore, the application of Stirling's formula provides
\begin{equation*}
\begin{split}
\lim_{k \rightarrow \infty} \dfrac{\log |\Gamma_{k}|}{k} &\leq \lim_{k \rightarrow \infty} \frac{k\log k - k +\log k-\sum_{(i,j)\in D} \left( k p_{i j} \log k p_{i j}-k p_{i j} \right)}{k} =- \! \! \!\sum_{(i,j)\in D} p_{i j} \log p_{i j}\\
\end{split}
\end{equation*}

\begin{equation*}
\begin{split}
\lim_{k \rightarrow \infty} \dfrac{\log |\Gamma_{k}|}{k} &\geq \lim_{k \rightarrow \infty} \frac{k\log k - k -\sum_{(i,j)\in D} \left( k p_{i j} \log k p_{i j}-k p_{i j} + \log k p_{i j}  \right)}{k} =- \! \! \!\sum_{(i,j)\in D} p_{i j} \log p_{i j}.
\end{split}
\end{equation*}
And thus
\begin{equation}\label{lim11}
\begin{split}
\lim_{k \rightarrow \infty} \dfrac{\log |\Gamma_{k}|}{k}=- \! \! \!\sum_{(i,j)\in D} p_{i j} \log p_{i j}.
\end{split}
\end{equation}
Similarly, 
\begin{equation}\label{lim22}
\begin{split}
\lim_{k\rightarrow \infty} \dfrac{\log \vert \overline{\Gamma
}^{X}_{k}\vert}{k}=- \! \! \!\sum_{i\in \overline{D}_{X}} \! \! \! R_{i}(\textbf{p}) \log R_{i}(\textbf{p}).
\end{split}
\end{equation}
\noindent By definition of $m_k$ and $n_k$,  
\begin{equation*}
\log m_k= - k \! \! \!\sum_{i \in \overline{D}_{X}}  \! \! \!R_i(\textbf{p})\log a_{i}  + o(k) \qquad \log n_k= -k \! \! \!\sum_{j \in \overline{D}_{Y}} \! \! \! S_j(\textbf{p})\log b_j  + o(k).
\end{equation*} 

\noindent Thus, putting all together and recalling equation (\ref{pinB}) and the choice of $\mathbold{p}$ we have
\begin{equation*}
\begin{split}
\lim_{k\rightarrow \infty}s_k &=\frac{\log \vert\overline{\Gamma
}^{X}_{k} \vert}{ \log m_k}+ \frac{\log \vert \Gamma_k \vert-\log \vert\overline{\Gamma
}^{X}_{k} \vert}{ \log n_k}\\
&=\dfrac{\sum_{i \in \overline{D}_{X}} R_{i}(\textbf{p}) \log R_{i}(\textbf{p})}{\sum_{i \in \overline{D}_{X}} R_i(\textbf{p})\log a_{i} }+ \dfrac{\sum_{(i,j)\in D} p_{i j} \log p_{i j}-\sum_{i \in \overline{D}_{X}} R_{i}(\textbf{p}) \log R_{i}(\textbf{p})}{\sum_{j=1}^n S_j(\textbf{p})\log b_j}\\
& = \dfrac{\sum_{i=1}^m R_i(\textbf{p})\log R_i(\textbf{p})}{\sum_{i=1}^m R_i(\textbf{p})\log a_i} +  \dfrac{\sum_{ij} p_{ij}\log\left(\frac{p_{ij}}{R_i(\textbf{p})}\right)}{\sum_{j \in \overline{D}_{Y}} S_j(\textbf{p})\log b_j  }\\
&=g(\textbf{p})= \max_{\textbf{q}\in\mathbb{P}^{|D|}}g(\textbf{q}).
\end{split}
\end{equation*}

\end{proof}

Our strategy relies on projections onto the coordinate axes in order to apply Hochman's results, that will only guarantee the absence of a dimension drop for certain parameters $\underline{t}$. Looking again at our fixed Bara\'{n}ski system, let $E_{X}$ and $E_{Y}$ be the sets of parameters $ \underline{t}_{X} \in [0, 1-a]^{\vert \overline{D}_{X} \vert}$ and  $ \underline{\tau}_{Y} \in [0, 1-b]^{ \vert \overline{D}_{Y} \vert}$ such that the IFSs $\lbrace \overline{S}_{\underline{t},i}\rbrace_{i \in \overline{D}_{X}}$, $\lbrace \overline{S}_{\underline{t},j}\rbrace_{j \in \overline{D}_{Y}}$ have super-exponential concentration of cylinders. Then Theorem \ref{H1} states that any possible overlapping doesn't cause the dimensions of the attractors of the projected systems in the coordinate axes to drop, provided that $ \underline{t}_{X} \in [0, 1-a]^{\vert \overline{D}_{X} \vert} \setminus E_{X}$ and $ \underline{\tau}_{Y} \in [0, 1-b]^{\vert \overline{D}_{Y} \vert} \setminus E_{Y}$. Therefore, the following set stands as the logical candidate for the set of ``invalid'' parameters:
\begin{equation}\label{setE}
E:= \left( E_{X}\times \left[0, 1-b\right]^{\vert \overline{D}_{Y} \vert} \; \medcup \; \left[0, 1-a\right]^{\vert \overline{D}_{X} \vert} \times E_{Y} \right),
\end{equation} 
since we are looking for the $\underline{t} \in A$ such that $\underline{t}_{X} \notin E_{X}$ and  $\underline{\tau}_{Y} \notin E_{Y}$ simultaneously. \\

\begin{lemma}\label{dimE} Let $E$ be the set of parameters defined above. 
\begin{itemize}
\item[(a)]If all the defining parameters $a_i$, $b_j$ and vector $\underline{t}$ are algebraic, and the IFSs $\lbrace a_{i}x + t_{i}\rbrace_{ i \in \overline{D}_{X}}$ and $\lbrace b_{j} y+ \tau_{j}\rbrace_{ j \in \overline{D}_{Y}}$ do not have an exact overlap, then $\underline{t} \notin E$. 
\item [(b)] $\dim_H(E)=\dim_P(E)=\vert \overline{D}_{X} \vert + \vert \overline{D}_{Y} \vert -1.$
\end{itemize}
\end{lemma}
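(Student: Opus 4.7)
The plan is to derive both statements directly from Proposition \ref{H2} applied to the two one-dimensional projected IFSs, namely $\lbrace a_i x+t_i\rbrace_{i\in\overline{D}_X}$ and $\lbrace b_j y+\tau_j\rbrace_{j\in\overline{D}_Y}$. By the definitions immediately preceding (\ref{setE}), the sets $E_X$ and $E_Y$ are precisely the parameter sets on which each of these systems exhibits super-exponential concentration of cylinders.

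For part (a), I would simply invoke Proposition \ref{H2}(2): under the algebraic hypothesis, SECC is equivalent to the presence of an exact overlap, so the no-exact-overlap assumption on the two projected IFSs immediately gives $\underline{t}_X\notin E_X$ and $\underline{\tau}_Y\notin E_Y$. Combined with the form of $E$ in (\ref{setE}) (a union of two ``slabs''), this yields $\underline{t}\notin E$.

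For part (b), I would first apply Proposition \ref{H2}(1) to get $\dim_H E_X=\dim_P E_X=|\overline{D}_X|-1$ and, analogously, $\dim_H E_Y=\dim_P E_Y=|\overline{D}_Y|-1$. To compute the dimensions of the two Cartesian products appearing in (\ref{setE}), I would then invoke the standard product inequalities
\[
\dim_H A+\dim_H B\;\leq\;\dim_H(A\times B)\;\leq\;\dim_H A+\dim_P B\;\leq\;\dim_P(A\times B)\;\leq\;\dim_P A+\dim_P B,
\]
applied with $A\in\lbrace E_X, E_Y\rbrace$ and $B$ a full-dimensional cube. Since for such a cube Hausdorff and packing dimensions both equal its topological dimension ($|\overline{D}_Y|$ or $|\overline{D}_X|$, respectively), the above inequality chain collapses to equalities, and both $\dim_H$ and $\dim_P$ of each product equal $|\overline{D}_X|+|\overline{D}_Y|-1$. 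Finite stability of Hausdorff and packing dimensions under finite unions then gives the stated value for $E$.

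There is no real obstacle here: both parts are essentially a packaging of Proposition \ref{H2} together with elementary dimension-theoretic facts. The only mild care needed is checking that the product-dimension inequalities become equalities for our sets, which is automatic because the ``thick'' factor in each product is a Euclidean cube.
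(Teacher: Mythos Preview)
Your proposal is correct and follows essentially the same approach as the paper: both parts are derived from Proposition~\ref{H2} applied to the two projected one-dimensional IFSs, together with the standard product-dimension inequalities (with one factor a full-dimensional cube) and finite stability of Hausdorff and packing dimension under unions.
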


\begin{proof}
The first statement follows directly from the definition of $E$ and Proposition \ref{H2}. In order to prove property $(b)$, let ``$\dim$'' denote either the Hausdorff or packing dimension. Then we have $\dim F=n$ when $F$ is a hypercube of $\mathbb{R}^{n}$, and $\dim \left( F_{1} \cup F_2 \right)= \max \lbrace \dim F_{1}, \dim F_2 \rbrace.$ See for example \cite [Chapter 3]{FALbook}. Besides, it is shown in \cite{PSP} that the dimensions of the product of metric spaces satisfy
\begin{equation*}\label{prodd}
\dim_{H}E + \dim_{H}F \leq \dim_{H}(E \times F) \leq \dim_{H}E + \dim_{P}F \leq \dim_{P}(E \times F) \leq \dim_{P}E + \dim_{P}F.
\end{equation*}
By Proposition \ref{H2}, 
\begin{equation*}
\dim(E_{X})=|\overline{D}_{X}| - 1 \quad \qquad \dim (E_{Y})=|\overline{D}_{Y}| - 1,
\end{equation*}
\noindent Thus, putting all together we get
\begin{equation*}
\begin{split}
\dim_{H} E &= \max \Bigg\{ \dim_{H} \left( E_{X}\times \left[0, 1-b\right]^{\vert \overline{D}_{Y} \vert} \right), \dim_{H}\left( \left[0, 1-a\right]^{\vert \overline{D}_{X} \vert} \times E_{Y} \right) \Bigg\}\\
&= \max \big \{ \dim_{H} E_{X} + \vert \overline{D}_{Y} \vert,  \vert \overline{D}_{X} \vert + \dim_{H} E_{Y}\big \}\\
&= \vert \overline{D}_{X} \vert + \vert \overline{D}_{Y} \vert -1,
\end{split}
\end{equation*}
and
\vspace{-5pt}
\begin{equation*}
\dim_{H} E \leq \dim_{P} E \leq \max \big \{ \dim_{P} E_{X} + \vert \overline{D}_{Y} \vert, \dim_{P} E_{Y}+ \vert \overline{D}_{X} \vert\big \}
= \vert \overline{D}_{X} \vert + \vert \overline{D}_{Y} \vert -1.
\end{equation*}
\end{proof}

Using Theorem \ref{H1} and Lemma \ref{vitali}, we are now able to prove that for all parameters $\underline{t}$ outside $E$, the Bedford-McMullen-type systems $\mathcal{I}_k$ defined in equation (\ref{BMcarpet}) can be approximated by with subsystems with ``enough maps'' and without overlapping rows. With that aim let
\begin{equation*}
\overline{\Gamma}^{Y}_k=\big \{(j_1, \ldots, j_{\theta(k)}) : ((i_1,j_1), \ldots, (i_{\theta(k)},j_{\theta(k)})) \in \Gamma_k \text{ for some }i_1, \ldots i_{\theta(k)} \big \}, 
\end{equation*}
\noindent and for any fixed $\underline{t} \in A\setminus E$ consider the corresponding associated IFS of similarities 
\begin{equation*}
 \mathcal{I}^{Y}_k =\big \{ \overline{S}_{\underline{t},\mathbold{\lambda_{k}}}\big \}_{\mathbold{\lambda_{k}} \in \overline{\Gamma}_k^{Y}}.
\end{equation*}

\begin{lemma} \label{G21} Let $\underline{t} \in A \setminus E$ and $\mathcal{I}_k$ be the Bedford-McMullen-type system with uniform fibres and attractor $\Lambda_k$ defined in equation (\ref{BMcarpet}). For a given $\epsilon >0 $ there exists $\ell_{0} \in \mathbb{N}$  such that for all $\ell \geq \ell_{0} $ we can define a new system $\mathcal{L}_{\ell}=\lbrace S_{j} \rbrace_{j \in G_{k,\ell}}$ with attractor $\Upsilon_{\ell} \subseteq \Lambda_k$ and such that $\mathcal{L}_{\ell}^{Y}$ satisfies the OSC. Besides, $G_{k, \ell}\subseteq \Gamma_k^{\ell}$ and 
\begin{equation*}
\vert G_{k, \ell} \vert \geq 
3^{-1}(1/n_k)^{ \ell \epsilon} \vert \Gamma_{k}\vert^{\ell}. 
\end{equation*}
\end{lemma}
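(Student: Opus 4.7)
The plan is to apply Hochman's Theorem \ref{H1} and the separation Lemma \ref{vitali} to the $Y$-projected IFS, and then lift the resulting subsystem back to $\mathbb{R}^2$ using the uniform fibres of $\mathcal{I}_k$. Concretely, I consider the homogeneous IFS $\mathcal{I}_k^Y = \{\overline{S}_{\underline{t},\mathbold{\lambda_{k}}}\}_{\mathbold{\lambda_{k}} \in \overline{\Gamma}_k^Y}$ on $[0,1]$, whose maps all have common contraction ratio $1/n_k$. Since $\underline{t} \notin E$ implies $\underline{\tau}_Y \notin E_Y$, the original projected IFS $\{b_j y + \tau_j\}_{j \in \overline{D}_Y}$ does not have SECC; as $\mathcal{I}_k^Y$ is obtained by $\theta(k)$-fold iteration of this IFS followed by deletion of maps, Remark \ref{tilde} transfers the SECC-free property to $\mathcal{I}_k^Y$. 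Theorem \ref{H1} then gives that the attractor of $\mathcal{I}_k^Y$ has Hausdorff dimension $\alpha_k := \min(\log|\overline{\Gamma}_k^Y|/\log n_k,\, 1)$.

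Next I apply Lemma \ref{vitali} to $\mathcal{I}_k^Y$ with the given $\epsilon$: there exists $\ell_0 \in \mathbb{N}$ such that for each $\ell \geq \ell_0$ one obtains a subset $H_\ell \subseteq (\overline{\Gamma}_k^Y)^\ell$ indexing a sub-IFS of $(\mathcal{I}_k^Y)^\ell$ that satisfies the SSC (hence the OSC), with
$$|H_\ell| \geq 3^{-\alpha_k} n_k^{\ell(\alpha_k - \epsilon)}.$$

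Lifting, I define $G_{k,\ell} := \{\mathbold{\mu} \in \Gamma_k^\ell : \pi_Y(\mathbold{\mu}) \in H_\ell\}$, where $\pi_Y$ sends a sequence over $D$ to its sequence of second coordinates. The system $\mathcal{L}_\ell = \{S_{\underline{t},\mathbold{\mu}}\}_{\mathbold{\mu} \in G_{k,\ell}}$ has attractor $\Upsilon_\ell \subseteq \Lambda_k$ since $G_{k,\ell} \subseteq \Gamma_k^\ell$, and its $Y$-projection $\mathcal{L}_\ell^Y$ coincides with the sub-IFS of $(\mathcal{I}_k^Y)^\ell$ indexed by $H_\ell$ and hence satisfies the OSC. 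The uniform horizontal fibres of $\mathcal{I}_k$ force each $Y$-string in $\overline{\Gamma}_k^Y$ to be realised by exactly $|\Gamma_k|/|\overline{\Gamma}_k^Y|$ elements of $\Gamma_k$, and thus each $\mathbold{\mu}_Y \in (\overline{\Gamma}_k^Y)^\ell$ is realised by $(|\Gamma_k|/|\overline{\Gamma}_k^Y|)^\ell$ elements of $\Gamma_k^\ell$. Consequently
$$|G_{k,\ell}| = |H_\ell|\cdot \bigl(|\Gamma_k|/|\overline{\Gamma}_k^Y|\bigr)^\ell.$$

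The main obstacle is to relate $\alpha_k$ to $|\overline{\Gamma}_k^Y|$. The key fact, which I verify via Gibbs inequality applied to the probability vector $(S_j(\mathbold{p}))_{j \in \overline{D}_Y}$ against the sub-probability vector $(b_j)_{j \in \overline{D}_Y}$, together with Stirling's formula (\ref{stirh}), is that $\log|\overline{\Gamma}_k^Y| \leq \log n_k$ up to lower-order corrections absorbable into $\epsilon$; equivalently the similarity dimension $\log|\overline{\Gamma}_k^Y|/\log n_k$ does not exceed $1$. Therefore $\alpha_k = \log|\overline{\Gamma}_k^Y|/\log n_k$ and $n_k^{\alpha_k} = |\overline{\Gamma}_k^Y|$, and substituting into the cardinality identity yields
$$|G_{k,\ell}| \geq 3^{-\alpha_k}\, n_k^{-\ell\epsilon}\,|\overline{\Gamma}_k^Y|^\ell \cdot \frac{|\Gamma_k|^\ell}{|\overline{\Gamma}_k^Y|^\ell} \geq 3^{-1}\,(1/n_k)^{\ell\epsilon}\,|\Gamma_k|^\ell,$$
as required.
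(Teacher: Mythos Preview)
Your argument is correct and follows essentially the same route as the paper: project $\mathcal{I}_k$ to the $Y$-axis, invoke Hochman's theorem to identify the dimension of the projected attractor, apply Lemma~\ref{vitali} to extract an SSC subsystem, and lift back using the uniform-fibres factorisation $|\Gamma_k| = |\overline{\Gamma}_k^Y|\cdot J$. One small remark: the inequality $|\overline{\Gamma}_k^Y| \leq n_k$ (equivalently $\overline{s}_k^Y \leq 1$), which the paper states without proof and you justify asymptotically via Gibbs and Stirling, in fact holds \emph{exactly} for every $k$ by the multinomial theorem, since $\binom{\theta(k)}{c_1,\ldots,c_r}\prod_j b_j^{c_j}$ is a single term in the expansion of $\bigl(\sum_{j\in\overline{D}_Y} b_j\bigr)^{\theta(k)} \leq 1$; this removes any need to absorb correction terms into $\epsilon$.
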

\vspace{7pt}
\begin{proof}
Let $\Lambda_k^{Y}$ be the attractor of the projected system $\mathcal{I}^{Y}_k $. Since  $\underline{t} \notin E$, by Theorem \ref{H1} we have that 
\begin{equation} \label{alphak}
\dim_{H}(\Lambda_k^{Y})=\frac{\log \vert \overline{\Gamma}_k^{Y}\vert}{\log n_k}=:\overline{s}_k^{Y},
\end{equation}
that satisfies $0\leq \overline{s}^{Y}_{k}\leq 1$. 
Using Lemma \ref{vitali}, we can approximate $\mathcal{I}^{Y}_{k}$ by a subsystem satisfying the SSC by assigning to the parameters of the mentioned lemma the values $\alpha= \overline{s}^{Y}_{k}$, $a=n_k^{-1}$. Then there exists $\ell_{0}\in \mathbb{N}$ so that  for $\ell \geq \ell_{0} $ we may find  
\[ \overline{G}^{Y}_{k,\ell} \subset (\overline{\Gamma}^{Y}_{k})^{\ell}\]
such that the system $ \lbrace \overline{S}_{i,\underline{t}}\rbrace_{i \in \overline{G}^{Y}_{k,\ell} } $  satisfies the SSC, and
\begin{equation}\label{boundss}
\vert \overline{G}^{Y}_{k,\ell} \vert \geq 3^{-\overline{s}^{Y}_{k}}(1/n_k)^{- \ell(\overline{s}^{Y}_{k}-\epsilon)}\geq 3^{-1}(1/n_k)^{ \ell \epsilon} \vert \overline{\Gamma}^{Y}_{k}\vert^{\ell},
\end{equation}
since by equation (\ref{alphak}) we have $\vert \overline{\Gamma}_{k}^{Y}\vert = n_k^{\overline{s}^{Y}_{k}}$ and $0\leq \overline{s}^{Y}_{k} \leq 1$. \\

\noindent We fix any such $\ell \geq \ell_{0}$ and define the set  
\[G_{k, \ell}:=\lbrace ((i_{1},j_{1}),\ldots ,(i_{\theta(k)\ell},j_{\theta(k)\ell}) \in \Gamma_k^{\ell}:(j_{1},\ldots ,j_{\theta(k)\ell}))  \in \overline{G}^{Y}_{k,\ell} \rbrace,\]
\noindent that is, we are considering the set of all rectangles of generation $\theta(k)\ell$ in $[0,1]^{2}$  whose projection onto the vertical axis belongs to $\overline{G}_{k,\ell}^{Y}$. Observe that the system $\mathcal{I}_k$ having uniform fibers means that $\vert \Gamma_k\vert =\vert  \overline{\Gamma}^{Y}_{k}\vert  J $, with $J=\vert \Gamma_k \vert/\vert\overline{\Gamma}^{Y}_{k}\vert $
being the number of elements in a row. Therefore, under iteration we get
\[ \vert \Gamma_k^{\ell} \vert  =\vert  \overline{\Gamma}^{Y}_{k}\vert^{\ell}  J ^{\ell} ,\]
with the rows of the IFS  $\lbrace S_{j} \rbrace_{j \in \Gamma_k^{\ell}}$ having $ J ^{\ell}$ elements. The set $G_{k, \ell}$ is a subset of $\Gamma_k^{\ell}$ obtained by removing some of its rows whilst keeping $\vert \overline{G}^{Y}_{k,\ell}\vert$ of them, so using equation (\ref{boundss}) we get
\begin{equation*} \label{G2}
\vert G_{k, \ell} \vert =\vert \overline{G}^{Y}_{k,\ell}\vert J ^{\ell}= \vert \overline{G}^{Y}_{k,\ell}\vert\left( \frac{\vert \Gamma_{k}\vert }{\vert \overline{\Gamma}_{k}^{Y} \vert }\right)^{\ell} \geq 
3^{-1}(1/n_k)^{ \ell \epsilon} \vert \Gamma_{k}\vert^{\ell}. 
\end{equation*} 
\end{proof}

\noindent We proceed now to present the argument to get a lower bound for the Hausdorff dimension. The idea is to apply Fraser-Shmerkin's Theorem to the attractors $\Upsilon_{\ell}$. This theorem provides us with a formula for their dimension in terms of the number of rectangles on each column. By the previous lemma, the the total number of maps that generate $\Upsilon_{\ell}$ is ``high enough'' to prove that $\dim_{H}(\Upsilon_{\ell})$ doesn't drop from our target dimension even for the configuration in columns that gives the smallest Hausdorff dimension.

\begin{prop} \label{lowerH}
Let $\underline{t} \in A \setminus E$. Then
\[ \dim_{H}(F_{\underline{t}})\geq \max_{\textbf{p}\in\mathbb{P}^{|D|}}g(\textbf{p}). \]
\end{prop}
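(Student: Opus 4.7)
The plan is to use the nested inclusions $\Upsilon_\ell \subseteq \Lambda_k \subset F_{\underline{t}}$ supplied by Lemmas \ref{sktogp} and \ref{G21}, so that any lower bound on $\dim_H(\Upsilon_\ell)$ transfers to $F_{\underline{t}}$. Fix $\underline{t} \in A \setminus E$ and a small $\epsilon > 0$; for each $k \in \mathbb{N}$ we invoke Lemma \ref{G21} with this $\epsilon$ to obtain, for all $\ell$ sufficiently large, a subsystem $\mathcal{L}_\ell$ of $\mathcal{I}_k^\ell$ whose attractor $\Upsilon_\ell \subseteq \Lambda_k$ satisfies $|G_{k,\ell}| \geq 3^{-1} n_k^{-\ell\epsilon}|\Gamma_k|^\ell$ and whose $Y$-projection $\mathcal{L}_\ell^Y$ satisfies the SSC.

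The attractor $\Upsilon_\ell$ is itself a Bedford-McMullen-type carpet with parameters $(m_k^\ell, n_k^\ell)$, so the plan is to apply Fraser-Shmerkin's Theorem to it. The SSC (hence OSC) of $\mathcal{L}_\ell^Y$ is already granted. For the other hypothesis we need $\mathcal{L}_\ell^X$ to avoid SECC: since $\underline{t}_X \notin E_X$, the IFS $\{a_i x + t_i\}_{i \in \overline{D}_X}$ has no SECC, and $\mathcal{L}_\ell^X$ is obtained from it by iterating $\theta(k)\ell$ times and then deleting some compositions, so Remark \ref{tilde} applies. Fraser-Shmerkin's Theorem then yields
\begin{equation*}
\dim_H(\Upsilon_\ell) \;=\; \frac{1}{\ell \log m_k}\, \log \sum_{\vec{i} \in \overline{G}_{k,\ell}^X} |I_{\vec{i}}'|^{\alpha_k},
\qquad \alpha_k := \frac{\log m_k}{\log n_k} \leq 1,
\end{equation*}
where $|I_{\vec{i}}'|$ denotes the number of rectangles of $\mathcal{L}_\ell$ whose $X$-projection equals $\vec{i}$.

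The main obstacle is estimating this sum from below without controlling the precise distribution of the column cardinalities $|I_{\vec{i}}'|$. The plan is to exploit the uniform-fibre property of $\mathcal{I}_k$: each column of $\mathcal{I}_k^\ell$ contains exactly $M_{k,\ell} := (|\Gamma_k|/|\overline{\Gamma}_k^X|)^\ell$ rectangles, and since $\mathcal{L}_\ell$ is obtained by removing whole rows, $|I_{\vec{i}}'| \leq M_{k,\ell}$. As $\alpha_k \leq 1$, the elementary inequality $x^{\alpha_k} \geq x \cdot M_{k,\ell}^{\alpha_k - 1}$ holds for every $0 < x \leq M_{k,\ell}$, which summed over $\vec{i}$ gives $\sum_{\vec{i}} |I_{\vec{i}}'|^{\alpha_k} \geq |G_{k,\ell}| \cdot M_{k,\ell}^{\alpha_k - 1}$.

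Substituting the cardinality bound from Lemma \ref{G21}, taking logarithms, and using $\alpha_k - 1 = (\log m_k - \log n_k)/\log n_k$ together with the definition (\ref{s_k}) of $s_k$, a routine algebraic simplification should reduce the bound to
\begin{equation*}
\dim_H(\Upsilon_\ell) \;\geq\; s_k \;-\; \epsilon\,\frac{\log n_k}{\log m_k} \;-\; \frac{\log 3}{\ell\, \log m_k}.
\end{equation*}
Since $\Upsilon_\ell \subseteq F_{\underline{t}}$, we first let $\ell \to \infty$ to kill the last term, then $k \to \infty$ to invoke Lemma \ref{sktogp}, which gives $s_k \to \max_{\textbf{p}} g(\textbf{p})$; the ratio $\log n_k/\log m_k$ stays bounded in $k$ by the asymptotic formulas for $\log m_k, \log n_k$ derived in that lemma, so sending $\epsilon \to 0$ finally yields $\dim_H(F_{\underline{t}}) \geq \max_{\textbf{p}} g(\textbf{p})$, as required.
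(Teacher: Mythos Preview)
Your argument is correct and follows the same overall scheme as the paper: apply Lemma~\ref{G21} to get $\mathcal{L}_\ell$, verify the hypotheses of Fraser--Shmerkin's Theorem via Remark~\ref{tilde}, obtain the explicit formula for $\dim_H(\Upsilon_\ell)$, lower-bound the column sum, and pass to the limit. The one place you diverge is in bounding $\sum_{\vec{i}} |I'_{\vec{i}}|^{\alpha_k}$ from below: the paper poses a discrete optimization problem (minimize $\sum N_i^\gamma$ subject to $0\le N_i\le N$ and $\sum N_i\ge T$) and shows the minimum is attained by packing columns as fully as possible, giving $\lfloor T/N\rfloor N^\gamma$, whereas you use the pointwise concavity inequality $x^{\alpha_k}\ge x\,M_{k,\ell}^{\alpha_k-1}$ for $0<x\le M_{k,\ell}$ and sum directly to get $T\cdot M_{k,\ell}^{\alpha_k-1}$. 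Both yield the same asymptotic bound $s_k-\gamma^{-1}\epsilon-O(1/\ell)$; your route is slightly cleaner since it avoids the floor function and the extra $c_\ell$ error term, but the ideas are the same.
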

\begin{proof}

Let $\mathcal{I}_k$ be the IFS defined in (\ref{BMcarpet}), to which we apply Lemma \ref{G21} to get a new system $\mathcal{L}_{\ell}=\lbrace S_{j} \rbrace_{j \in G_{k,\ell}}$  free of overlapping rows and with attractor $\Upsilon_{\ell}$. Note that by the choice of $\underline{t} \notin E$, the projected system $\lbrace \overline{S}_{i, \underline{t}}\rbrace_{i \in \overline{D}_{X}}$ does not have super exponential concentration of cylinders, and thus by Remark \ref{tilde}, neither does $\overline{\mathcal{I}}^{X}_\ell$. Consequently, we can apply Fraser-Shmerkin's Theorem \ref{FSthm} to $\Upsilon_{\ell}$ and get that 
\begin{equation*}\label{formF}
 \dim_{H}(\Upsilon_{\ell})=\dfrac{\log \sum_{i \in \overline{G}^{X}_{k,\ell}} \vert I_{i}\vert^{\frac{\log m_{k}^{\ell}}{\log n_{k}^{\ell}}}}{\log m_{k}^{\ell}},
\end{equation*}
with $\vert I_{i}\vert$ being the number of chosen rectangles in the $i$th column of $\mathcal{L}_{\ell}$. After this last subsystem approximation performed, we don't know the exact values that the variables $\vert I_{i}\vert$ take. Nonetheless, since $\mathcal{I}_k$ has uniform fibers and  by Lemma \ref{G21}, we have that $G_{k, \ell}\subseteq \Gamma^{\ell}$, in addition to the bounds
\begin{equation}\label{revH}
\vert I_{i}\vert\leq I^{\ell}=\frac{\vert \Gamma_k\vert^{\ell}}{\vert \overline{\Gamma}^{X}_{k}\vert^{\ell}}\quad  \text{ for each } i \in \overline{G}^{X}_{k,\ell} \qquad  \text{ and }  \qquad \sum_{i \in \overline{G}^{X}_{k,\ell}} \vert I_{i}\vert=\vert G_{k, \ell} \vert \geq 
3^{-1}(1/n_k)^{ \ell \epsilon} \vert \Gamma_{k}\vert^{\ell}. 
\end{equation}

Let $\gamma=\frac{\log m_k}{\log n_k}$, $N=I^{\ell}$ and $T=\vert G_{k, \ell} \vert$. We shall see that $\dim_{H}(\Upsilon_{\ell})\geq s_k$, with $s_k$ as in (\ref{s_k}), for any distribution in columns of $T$ rectangles. In particular it suffices to prove it for the distribution that minimizes $\dim_{H}(\Upsilon_{\ell})$. Thus, we start by addressing the optimization problem of finding integers $0\leq N_i\leq N$ minimizing $\sum_{i} N_i^{\gamma}$ and such that $\sum N_i\geq T$.\\ 

Observe that since by equation (\ref{pinB}) we have $0<\gamma<1$, for $0< N_i\leq N_j$ the functions $f_{N_i,N_j}(x)=(N_i-x)^{\gamma} +(N_j+x)^{\gamma}$ are decreasing for $0<x\leq N_i$. In particular $(N_i-1)^{\gamma} +(N_j+1)^{\gamma}<N_i^{\gamma}+N_j^{\gamma}$, which means that there cannot be  two elements $0< N_i\leq N_j<N$ as part of the solution. Otherwise they could be replaced by the pair $(N_i-1, N_j+1)$, contradicting the minimality of the objective function. Thus, the minimum is attained at  
$$ \Big( \underbrace{N, \ldots, N}_{ \left\lfloor T/N \right \rfloor \text{ times }},\quad (T-\left\lfloor T/N \right \rfloor N), \underbrace{0, \ldots\ldots ,0}_{  \vert \overline{G}^{X}_{k,\ell} \vert -  \left\lfloor T/N \right \rfloor-1\text{ times }}  \Big),$$ 
\noindent for which the objective  $\sum_{i} N_i^{\gamma}$ function takes the value
\begin{equation*} \label{optim}
\left \lfloor \frac{T}{N} \right \rfloor N^{\gamma}+ \left(T-\left \lfloor \frac{T}{N} \right \rfloor N\right)^{\gamma}.
\end{equation*}

\noindent Therefore, for the original distribution $ \lbrace \vert I_i\vert\rbrace_i$ on $\mathcal{L}_\ell$, we get the bound $
\sum_{i \in \overline{G}^{X}_{k,\ell}} \vert I_i\vert^{\gamma} \geq \left \lfloor \frac{T}{N} \right \rfloor N^{\gamma}+ \left(T-\left \lfloor \frac{T}{N} \right \rfloor N\right)^{\gamma} \geq \left \lfloor \frac{T}{N} \right \rfloor N^{\gamma}
$, which together with equation (\ref{revH}) gives
\begin{equation*}\label{lim2}
\begin{split}
\dim_{H}(\Upsilon_{\ell}) & \geq \dfrac{\log (\lfloor\vert G_{k, \ell} \vert/I^{\ell} \rfloor I^{\ell \gamma})}{\log m_{k}^{\ell}}\geq \dfrac{\log \vert G_{k, \ell}\vert I^{\ell(\gamma-1)}}{\log m_{k}^{\ell}}- c_\ell\\
&= \frac{\log |G_{k,\ell}|}{ \ell \log m_k}+ \frac{(\gamma-1)\log (\vert \Gamma_k\vert/\vert \overline{\Gamma}^{X}_{k}\vert)}{ \log m_k}- c_\ell\\
&\geq \frac{\log \left( 3^{-1}(1/n_k)^{\ell \epsilon} \vert \Gamma_k\vert^{\ell}\right)}{\ell \log m_k}+ \frac{(\gamma-1)\log (\vert \Gamma_k\vert/\vert \overline{\Gamma}^{X}_{k}\vert)}{ \log m_k}-c_{\ell}\\
&= \frac{\log \vert\overline{\Gamma
}^{X}_{k} \vert}{ \log m_k}+ \frac{\log \vert \Gamma_k \vert-\log \vert\overline{\Gamma
}^{X}_{k} \vert}{ \log n_k}-c_{\ell} -\gamma^{-1}\epsilon -\zeta_{\ell}\\
&=s_{k}-c_{\ell} -\gamma^{-1}\epsilon -\zeta_{\ell},
\end{split}
\end{equation*}
where the auxiliary variables $c_{\ell}=\frac{1}{\ell \log m_{k}}$ and $\zeta_{\ell}=\frac{\log 3} { \ell \log m_k}$ converge to $0$ when $\ell \rightarrow \infty$.  Thus, letting $\epsilon$ tend to zero, using Lemma \ref{sktogp} and by monotonicity of the Hausdorff dimension we get the desired lower bound.
\end{proof}

\subsection{Calculation of the dimension}
We can now complete the proof of our results:

\begin{proof}[Proof of Theorem \ref{THMhaus}]
It follows directly from Propositions \ref{upperH}, \ref{lowerH} and Lemma \ref{dimE}.
\end{proof}

\begin{proof}[Proof of Corollary \ref{corHauss}]
By Fraser-Shmerkin's Theorem \ref{FSthm},  $\dim_{H}(F)=\frac{\log \sum_{i \in \overline{D}_{X}}\vert I_{i} \vert^{\frac{\log \tilde{m}}{\log \tilde{n}}}}{\log \tilde{m}}$, or in other words, for this particular case of Bara\'{n}ski carpets, $\max_{\textbf{q}\in\mathbb{P}^{|D|}}g(\textbf{q})$ is reached for the weights vector with coordinates $p_{ij}= \vert I_{i} \vert^{\frac{\log \tilde{m}}{\log \tilde{n}}-1}/\tilde{m}^{s}. $ Thus, by  Theorem \ref{THMhaus}, this will also be the Hausdorff dimension of $F_{\underline{t}}$ for all parameters $\underline{t}$ outside the set $E$. Let us now consider the hyperplane $$\mathcal{P}=\lbrace \underline{t} \in A : t_{i_{1}}=t_{i_{2}} \text{ for some }i_1,i_2 \in \overline{D}_{X} \rbrace.$$ 

This merges two columns of our original pattern, i.e. we have an exact overlap, and symbolically this is equivalent to replacing two columns with $N_{i_1}$, $N_{i_2}$ rectangles by a single column with $N\leq N_{i_1} + N_{i_2}$ rectangles. Since $(N_{i_1} + N_{i_2})^{\gamma} < N_{i_1}^{\gamma} + N_{i_2}^{\gamma} $ for any $\gamma \in (0,1)$ and in particular for $\gamma=\frac{\log \tilde{m}}{\log \tilde{n}}$, we have
\begin{equation*}
\dim_H (F_{\underline{t}})\leq \frac{\log (N_{i_1} + N_{i_2})^{\frac{\log \tilde{m}}{\log \tilde{n}}} + \sum_{\lbrace i=1,\ldots, m \rbrace \setminus \lbrace i_1, i_2 \rbrace} N_i^{\frac{\log \tilde{m}}{\log \tilde{n}}}}{\log \tilde{m}}<\dim_{H}(F).
\end{equation*}
\vspace{1pt}

\noindent Thus, $\mathcal{P} \subseteq E_0$, and since $\dim_{H}\mathcal{P}=\vert\overline{D}_{X}\vert + \overline{D}_{Y}\vert-1$, $\dim_{H} E_0 \geq \vert\overline{D}_{X}\vert + \vert  \overline{D}_{Y}\vert-1$. Note that $E_0\subseteq E$, although these sets are not necessarily equal. But this inclusion implies $\dim_{H}E_0\leq \dim_{H}E= \vert \overline{D}_{Y}\vert + \vert \overline{D}_{Y}\vert-1$ by Lemma \ref{dimE} $(b)$. Hence, by the sandwich lemma $\dim_{H}E_0=\vert\overline{D}_{X}\vert + \vert \overline{D}_{Y}\vert-1$. The same argument applies to the packing dimension of $E_0$, which concludes the proof.  
\end{proof}

\section{Calculation of the box-counting and packing dimension}\label{sectionBoxC}

This section is devoted to the proof of Theorem \ref{THMbox} and Corollary \ref{corBox}. We include here the box dimension case and note that the same result  is true for the packing dimension. This is due to the fact that each $F_{\underline{t}}$ is a compact set for which every open ball centred at it contains a bi-Lipschitz image of $F_{\underline{t}}$. Therefore, we can conclude that  $\dim_P F_{\underline{t}} = \dim_B F_{\underline{t}}$ for all $\underline{t}$. For more details see \cite[Corollary 3.10]{FALbook}. We now recall the definition of box-counting dimension. \\

\begin{defn}\label{boxx} Let $F$ be a non-empty bounded subset of $\mathbb{R}^{n}$. A $\delta$-\textit{cover} of $F$ is a collection of sets $\lbrace U_{i} \rbrace_{i=1}^{\infty}$ in $\mathbb{R}^{n}$ such that $F \subset \bigcup_{i=1}^{\infty} U_{i}$ and $\text{diam}(U_{i})\leq \delta$ for each $i$. Let $N_{\delta}(F)$ be the least number of sets in any $\delta$-cover of $F$. Then the \textit{lower} and \textit{upper box-counting dimensions} of $F$ are defined as 
 \begin{equation} \label{defb}
 \underline{\dim}_{B}F=\liminf_{\delta \rightarrow 0} \frac{\log N_{\delta}(F)}{-\log \delta}, \qquad \overline{\dim}_{B}F=\limsup_{\delta \rightarrow 0} \frac{\log N_{\delta}(F)}{-\log \delta}
 \end{equation}
respectively. If both limits are equal, we refer to the common value as the \textbf{box-counting dimension} of $F$:
\begin{equation}\label{defbb}
\dim_{B}F=\lim_{\delta \rightarrow 0} \frac{\log N_{\delta}(F)}{-\log \delta}.
\end{equation}
\end{defn}
We remark that $N_{\delta}$ can adopt several definitions all based on covering or packing the set at scale $\delta$, see \cite [Section 3.1]{FALbook}. In particular, for us $N_{\delta}$ will denote the number of cubes in an $\delta$-grid which intersect $F$. The next two properties, that follow directly from Definition \ref{boxx}, will help us finding the box dimension of our setting:

\begin{prop} \label{propbox} The following hold:
\vspace{-5pt}
\begin{enumerate}
\item In (\ref{defb}) and (\ref{defbb}) it is enough to consider limits as $\delta$ tends to $0$ through any decreasing sequence $\delta_{\ell}\rightarrow 0$ such that $\delta_{\ell+1} \geq a \delta_{\ell}$ for some constant $0 < a < 1$. In particular 
\begin{equation} \label{eqBox}
\liminf_{\delta_{\ell} \rightarrow 0} \frac{\log N_{\delta_{\ell}}(F)}{-\log \delta_{\ell}}\leq \liminf_{\delta \rightarrow 0} \frac{\log N_{\delta}(F)}{-\log \delta}.
\end{equation}

\item Let $s=\dim_{B}F$ and  let $\epsilon>0$. Then there exists a constant $C_{\epsilon} > 0$ such that for all $\delta \in (0,1]$
\begin{equation*}\label{Nd}
N_{\delta}(F) \geq C_{\epsilon} \delta^{-(s-\epsilon)}.
\end{equation*}
\end{enumerate}
 \end{prop}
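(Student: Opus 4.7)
Both parts of this proposition are standard regularity properties of box-counting dimension, so the proposal is essentially to follow the textbook sandwich and monotonicity arguments; I do not expect any conceptual obstacle, only care with constants.

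For part (1), the plan is a sandwich argument based on the monotonicity of $N_\delta(F)$ in $\delta$. Given an arbitrary $\delta\in(0,1]$ small enough, choose the unique index $\ell=\ell(\delta)$ with $\delta_{\ell+1}\leq \delta<\delta_\ell$. Since a $\delta_{\ell+1}$-cover is in particular a $\delta$-cover, $N_\delta(F)\leq N_{\delta_{\ell+1}}(F)$, and $-\log\delta>-\log\delta_\ell$, so
\begin{equation*}
\frac{\log N_\delta(F)}{-\log \delta} \;\leq\; \frac{\log N_{\delta_{\ell+1}}(F)}{-\log \delta_\ell} \;=\; \frac{\log N_{\delta_{\ell+1}}(F)}{-\log \delta_{\ell+1}}\cdot\frac{-\log \delta_{\ell+1}}{-\log \delta_\ell}.
\end{equation*}
The key observation is that the geometric-type condition $\delta_{\ell+1}\geq a\delta_\ell$ forces $-\log\delta_{\ell+1}\leq -\log\delta_\ell+\log(1/a)$, so the second factor satisfies $1\leq \frac{-\log\delta_{\ell+1}}{-\log\delta_\ell}\leq 1+\frac{\log(1/a)}{-\log\delta_\ell}\to 1$ as $\ell\to\infty$. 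Taking $\liminf$ as $\delta\to 0$ (which forces $\ell\to\infty$) yields inequality (\ref{eqBox}); the reverse inequality is automatic because $\{\delta_\ell\}$ is a subsequence, so the two $\liminf$'s (and analogously the $\limsup$'s) agree. Consequently the box-counting dimension can indeed be computed along such sequences.

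For part (2), the plan is to unpack the definition. By hypothesis $\dim_B F=s$, so in particular $\liminf_{\delta\to 0}\frac{\log N_\delta(F)}{-\log \delta}=s$. Fix $\epsilon>0$; then there exists $\delta_0\in(0,1]$ such that for every $\delta\in(0,\delta_0]$ one has $\frac{\log N_\delta(F)}{-\log \delta}\geq s-\epsilon$, which rearranges to $N_\delta(F)\geq \delta^{-(s-\epsilon)}$. For the remaining range $\delta\in(\delta_0,1]$ we use the trivial bound $N_\delta(F)\geq 1$ together with $\delta^{-(s-\epsilon)}\leq \delta_0^{-(s-\epsilon)}$. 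Setting
\begin{equation*}
C_\epsilon:=\min\bigl\{1,\;\delta_0^{\,s-\epsilon}\bigr\}
\end{equation*}
gives $N_\delta(F)\geq C_\epsilon\,\delta^{-(s-\epsilon)}$ uniformly on $(0,1]$, as required.

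The only mild subtlety, and the closest thing to an obstacle, is making sure in part (1) that the chosen $\ell(\delta)$ is well defined for all sufficiently small $\delta$ (which requires $\delta_\ell\to 0$, included in the hypothesis) and that the control on $\frac{-\log\delta_{\ell+1}}{-\log\delta_\ell}$ is uniform enough to survive the $\liminf$/$\limsup$ passage; both are handled by the explicit estimate above using $a$. Everything else is bookkeeping.
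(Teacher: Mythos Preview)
Your part (2) is correct and in fact a bit cleaner than the paper's version (which invokes continuity of $\delta\mapsto \log N_\delta(F)/(-\log\delta)$ and the extreme value theorem, while your choice $C_\epsilon=\min\{1,\delta_0^{s-\epsilon}\}$ avoids that).

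Part (1), however, has the sandwich running in the wrong direction for the $\liminf$. From $\delta_{\ell+1}\le \delta<\delta_\ell$ you bound
\[
\frac{\log N_\delta(F)}{-\log\delta}\ \le\ \frac{\log N_{\delta_{\ell+1}}(F)}{-\log\delta_{\ell+1}}\cdot\frac{-\log\delta_{\ell+1}}{-\log\delta_\ell},
\]
and then take $\liminf_{\delta\to 0}$. That yields $\liminf_{\delta\to 0}\frac{\log N_\delta}{-\log\delta}\le \liminf_{\ell}\frac{\log N_{\delta_\ell}}{-\log\delta_\ell}$, which is exactly the direction that is automatic for a subsequence, \emph{not} (\ref{eqBox}). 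You have the roles of ``derived'' and ``automatic'' swapped: for any subsequence $\delta_\ell\to 0$ one always has $\liminf_\delta\le \liminf_\ell$ and $\limsup_\ell\le \limsup_\delta$; the content of (\ref{eqBox}) is the reverse inequality $\liminf_\ell\le \liminf_\delta$. (Your displayed estimate does give the nontrivial direction for the $\limsup$, so that half is fine.)

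The fix is immediate and is what the paper does: reverse the monotonicity choices. From $\delta\le\delta_\ell$ one has $N_\delta(F)\ge N_{\delta_\ell}(F)$, and from $\delta\ge\delta_{\ell+1}\ge a\delta_\ell$ one has $-\log\delta\le -\log\delta_\ell+\log(1/a)$, giving
\[
\frac{\log N_{\delta_\ell}(F)}{-\log\delta_\ell}\ \le\ \frac{\log N_\delta(F)}{-\log\delta+\log a}\ =\ \frac{\log N_\delta(F)}{-\log\delta}\cdot\Bigl(1+\tfrac{\log a}{-\log\delta}\Bigr)^{-1}.
\]
Passing to $\liminf$ along any sequence $\delta\to 0$ realising $\liminf_\delta\frac{\log N_\delta}{-\log\delta}$ (and noting $\ell(\delta)\to\infty$) now gives (\ref{eqBox}).
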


\begin{proof}
\begin{enumerate}
\item For any $\delta\in (0,1]$ there exists $\ell$ such that  $\delta_{\ell +1} \leq \delta \leq \delta_{\ell}$, and thus $-1/\log \delta_{\ell +1} \leq - 1/\log \delta $ and $ \log N_{\delta}(F)\geq \log N_{\delta_{\ell}}(F)$.
Therefore,
 \begin{equation*}
 \begin{split}
\frac{ \log N_{\delta_{\ell}}(F)}{-\log \delta_{\ell}} &\leq \frac{\log N_{\delta}(F) }{-\log \delta_{\ell+1} + \log (\frac{\delta_{\ell+1}}{\delta_{\ell}})} 
\leq \frac{\log N_{\delta}(F) }{-\log \delta_{\ell+1} + \log (a)} 
\leq \frac{\log N_{\delta}(F) }{-\log \delta + \log (a)}
=\frac{\frac{\log N_{\delta}(F)}{-\log \delta }}{1 + \frac{\log (a)}{-\log \delta}},
\end{split}
\end{equation*}
and taking lower limits we get equation (\ref{eqBox}). The opposite inequality is immediate, and the case of upper limits can be dealt with in the same way.

\item Given $\epsilon >0$, by Definition \ref{boxx} there exists $\delta_{\epsilon}$  such that $\frac{\log N_{\delta}(F)}{-\log \delta} \geq s-\epsilon$ for all $\delta \leq \delta_{\epsilon}$, or equivalently, by monotonicity of the logarithmic function, $N_{\delta}(F) \geq  \delta^{-(s-\epsilon)}$.  Also $\frac{\log N_{\delta}(F)}{-\log \delta}$ is a continuous function on $\delta>0$. Thus, by the Weierstrass extreme value theorem, it reaches a minimum value $m_{\epsilon}$ on the interval $[\delta_{\epsilon}, 1]$, so that $\frac{\log N_{\delta}(F)}{-\log \delta} \geq m_{\epsilon}$ for all $\delta \in [\delta_{\epsilon}, 1]$. Thus, if we choose $C_{\epsilon}=\min\lbrace 1, \mathrm{e}^{m_{\epsilon}}\rbrace$, we get the desired inequality for all $\delta \in (0,1]$. 
\end{enumerate}
\end{proof} 

\subsection{Upper bound}

The upper bound for the box-counting dimension comes as a direct consequence of Fraser's paper \cite{Boxlike} on the dimensions of a class of self-affine carpets including our attractors $F_{\underline{t}}$. As we shall see, no separation conditions are required for his result, and the dimensions will come in terms of the box-dimensions of the orthogonal projections of the systems studied. For the sake of clarity we have decided to follow this approach, but we remark that alternatively, Bara\'{n}ski's argument for the upper bound of the box dimension in \cite[Section 6]{Baranski} should adapt to our setting.\\   

\noindent Let $\underline{t}\in A$. Then for any sequence $\mathbold{\lambda_{k}} \in D^{k}$, according to whether it is an $A$-sequence or $B$-sequence, we define
\begin{equation*}
   \overline{s}_{\mathbold{\lambda_{k}}}=\left\lbrace
  \begin{array}{l}
    \dim_B \pi_{\text{\tiny X}}(F_{\underline{t}}) \quad \text{ if }  L_{\mathbold{\lambda_{k}}}= A_{\mathbold{\lambda_{k}}}
      \\[15pt]
       
    \dim_B \pi_{\text{\tiny Y}}(F_{\underline{t}}) \quad \text{ if }  L_{\mathbold{\lambda_{k}}}= B_{\mathbold{\lambda_{k}}} \\
  \end{array}.
  \right.\\
\end{equation*}
\medskip
\begin{rem}\label{remUpB}
Observe that the box dimension of each of the attractors of the projected IFSs $\lbrace \overline{S}_{\underline{t},i}\rbrace_{i \in \overline{D}_{X}}$ and $\lbrace \overline{S}_{\underline{t},j}\rbrace_{j \in \overline{D}_{Y}}$ is trivially bounded by its similarity dimension, that by equation (\ref{TB}) equals $t_A$ and $t_B$ respectively.
\end{rem}
\noindent Define
\begin{equation*}
\Psi_{\underline{t},k}^{s}= \sum_{\mathbold{\lambda_{k}} \in \mathcal{D}^{k}} L_{\mathbold{\lambda_{k}}}^{ \overline{s}_{\mathbold{\lambda_{k}}}} \, \,  T_{\mathbold{\lambda_{k}}}^{s-\overline{s}_{\mathbold{\lambda_{k}}}}.
\end{equation*}\\
The main result of \cite{Boxlike}, in an adapted version to our setting, states:
\begin{thm} \cite [Theorem 2.4]{Boxlike} \label{main}
 For each $\underline{t}\in A$ we have $\dim_\text{\emph{P}} F_{\underline{t}} = \overline{\dim}_\text{\emph{B}} F_{\underline{t}} \leq s$, where $s \geq 0$ is the unique solution of $P_{\underline{t}}(s) = 1$, and the function $P_{\underline{t}}$ is defined as \[
P_{\underline{t}}(s) := \lim_{k \to \infty} \left( \Psi_{\underline{t},k}^{s} \right)^{1/k}.
\]
\end{thm}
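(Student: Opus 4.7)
The plan is to obtain the upper bound by a stopping-time covering argument in symbolic space, and then upgrade from upper box to packing dimension using the self-affine structure. First, a bookkeeping step: by concatenating finite words and separating cases according to whether the concatenation is an $A$- or $B$-sequence, one checks that $\Psi^s_{\underline{t},k+l} \leq C\,\Psi^s_{\underline{t},k}\,\Psi^s_{\underline{t},l}$, so that Fekete's lemma ensures the limit $P_{\underline{t}}(s) = \lim_k (\Psi^s_{\underline{t},k})^{1/k}$ exists. Continuity and strict monotonicity of $P_{\underline{t}}$ in $s$ (immediate from $L,T \in (0,1)$) then guarantee a unique root of $P_{\underline{t}}(s) = 1$.

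For the upper bound, fix any $s' > s$ (the root) so that $P_{\underline{t}}(s') < 1$, and for each $\delta > 0$ introduce the stopping set
\begin{equation*}
\mathcal{M}(\delta) = \bigl\{ \mathbold{\lambda_k} \in D^{k} : T_{\mathbold{\lambda_k}} \leq \delta < T_{\mathbold{\lambda_{k-1}}} \bigr\}.
\end{equation*}
Since every coding of a point of $F_{\underline{t}}$ passes through exactly one word of $\mathcal{M}(\delta)$ (despite overlaps), we have $F_{\underline{t}} = \bigcup_{\mathbold{\lambda_k} \in \mathcal{M}(\delta)} S_{\underline{t},\mathbold{\lambda_k}}(F_{\underline{t}})$. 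Each $S_{\underline{t},\mathbold{\lambda_k}}(F_{\underline{t}})$ is an affine copy of $F_{\underline{t}}$ with short side $T_{\mathbold{\lambda_k}} \asymp \delta$ and long side $L_{\mathbold{\lambda_k}}$; along the short direction it is covered by $O(1)$ squares of side $\delta$, while its projection onto the long coordinate axis is the corresponding projection of $F_{\underline{t}}$ rescaled by $L_{\mathbold{\lambda_k}}$. Proposition \ref{propbox}(2) applied to that projection thus gives, for any $\epsilon > 0$,
\begin{equation*}
N_\delta\bigl(S_{\underline{t},\mathbold{\lambda_k}}(F_{\underline{t}})\bigr) \leq C_\epsilon \left( \frac{\delta}{L_{\mathbold{\lambda_k}}}\right)^{-(\overline{s}_{\mathbold{\lambda_k}}+\epsilon)}.
\end{equation*}

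Summing and using $T_{\mathbold{\lambda_k}} \asymp \delta$,
\begin{equation*}
N_\delta(F_{\underline{t}}) \leq C_\epsilon \sum_{\mathbold{\lambda_k} \in \mathcal{M}(\delta)} L_{\mathbold{\lambda_k}}^{\overline{s}_{\mathbold{\lambda_k}}+\epsilon} T_{\mathbold{\lambda_k}}^{-(\overline{s}_{\mathbold{\lambda_k}}+\epsilon)} \leq C'_\epsilon\, \delta^{-s'} \sum_{\mathbold{\lambda_k} \in \mathcal{M}(\delta)} L_{\mathbold{\lambda_k}}^{\overline{s}_{\mathbold{\lambda_k}}+\epsilon}\, T_{\mathbold{\lambda_k}}^{s'-\overline{s}_{\mathbold{\lambda_k}}-\epsilon}.
\end{equation*}
The remaining task is to show the last sum stays bounded as $\delta \to 0$: I would use submultiplicativity of $\Psi^{s'+\epsilon}_{\underline{t},k}$ together with $P_{\underline{t}}(s' + \epsilon) < 1$ (for $\epsilon$ small) to dominate the $\mathcal{M}(\delta)$-sum by a geometric series in $(\Psi^{s'+\epsilon}_{\underline{t},K})^{n}$ for large deterministic $K$. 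Letting $\epsilon \downarrow 0$ and $s' \downarrow s$ then gives $\overline{\dim}_B F_{\underline{t}} \leq s$, which by Proposition \ref{propbox}(1) suffices.

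Finally, for $\dim_P F_{\underline{t}} = \overline{\dim}_B F_{\underline{t}}$, I would invoke the general fact that a compact set in which every point has arbitrarily small bi-Lipschitz copies of itself satisfies this equality \cite[Corollary 3.10]{FALbook}; such copies are supplied by the affine iterates $S_{\underline{t},\mathbold{\lambda_k}}(F_{\underline{t}}) \subset F_{\underline{t}}$, which are bi-Lipschitz images since the defining matrices are non-singular. The main technical obstacle is the aggregation step: converting a sum over the inhomogeneous stopping-time set $\mathcal{M}(\delta)$ (whose words vary both in length and in $A$/$B$ type) into something controlled uniformly by $\Psi^{s'+\epsilon}_{\underline{t},k}$. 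This requires a renewal-type decomposition grouped by generation $k$, together with a uniform overshoot bound handling the fact that applying one more map can shrink $T$ by a factor as large as $1/\min_{i,j}\{a_i,b_j\}$, and mixing the two geometric regimes $L=A$, $L=B$ in a way compatible with the definition of $\overline{s}_{\mathbold{\lambda_k}}$.
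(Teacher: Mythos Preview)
This theorem is not proved in the paper; it is quoted verbatim from Fraser's work \cite[Theorem 2.4]{Boxlike} and used as a black box to derive Corollary~\ref{upperB}. The only part the paper supplies independently is the equality $\dim_P F_{\underline{t}} = \overline{\dim}_B F_{\underline{t}}$, for which it gives (at the start of Section~\ref{sectionBoxC}) exactly the bi-Lipschitz argument you sketch in your last paragraph, citing \cite[Corollary 3.10]{FALbook}.

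Your outline is a faithful reconstruction of the type of argument Fraser uses in \cite{Boxlike}: submultiplicativity of $\Psi^s_{\underline{t},k}$ to get existence and monotonicity of $P_{\underline{t}}$ (this is \cite[Lemma 2.1--2.2]{Boxlike}, and the paper in fact invokes \cite[Lemma 2.2]{Boxlike} later in the proof of Corollary~\ref{upperB}), followed by a stopping-time cover $\mathcal{M}(\delta)$ and a projection estimate on each piece. The ``technical obstacle'' you flag---controlling the sum over $\mathcal{M}(\delta)$ by the fixed-length sums $\Psi^{s}_{\underline{t},k}$---is real and is handled in \cite{Boxlike} essentially as you suggest, via the bounded overshoot $T_{\mathbold{\lambda_{k-1}}}/T_{\mathbold{\lambda_k}} \leq 1/\min_{i,j}\{a_i,b_j\}$ and a geometric-series bound. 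So there is nothing wrong with your sketch, but there is no proof in the present paper to compare it against.
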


\begin{cor}\label{upperB}
For any $\underline{t} \in A $ we have 
\[ \dim_{B}(F_{\underline{t}})\leq \max (D_{A}, D_{B}),\]
where $D_{A}$, $D_{B}$ are the unique real numbers given by equation (\ref{DADB}).
\end{cor}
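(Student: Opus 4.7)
My plan is to apply Theorem \ref{main} and reduce the corollary to showing that $P_{\underline{t}}(s^*) \leq 1$, where $s^* = \max(D_A, D_B)$. Since $\Psi_{\underline{t},k}^s$ is a sum of positive terms each of the form $T^s (L/T)^{\overline{s}}$ with $T < 1$, it is strictly decreasing in $s$, so $P_{\underline{t}}$ is decreasing. Hence, showing $P_{\underline{t}}(s^*) \leq 1$ forces $s^*$ to lie above the unique root $s$ of $P_{\underline{t}}(s)=1$, yielding $\dim_B F_{\underline{t}} \leq s \leq s^*$.

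The key step is to split the sum $\Psi_{\underline{t},k}^{s^*}$ according to whether $\mathbold{\lambda_k}$ is an $A$-sequence or a $B$-sequence (with overlap at equality harmlessly counted twice). For an $A$-sequence, $L_{\mathbold{\lambda_k}} = A_{\mathbold{\lambda_k}}$, $T_{\mathbold{\lambda_k}} = B_{\mathbold{\lambda_k}}$, and by Remark \ref{remUpB}, $\overline{s}_{\mathbold{\lambda_k}} = \dim_B \pi_X(F_{\underline{t}}) \leq t_A$. A short preliminary check shows that $D_A \geq t_A$ (setting $s=t_A$ in the defining equation for $D_A$ makes the sum $\sum_i |I_i| a_i^{t_A} \geq \sum_i a_i^{t_A} = 1$, and the sum decreases in $s$), and analogously $D_B \geq t_B$; hence $s^* - \overline{s}_{\mathbold{\lambda_k}} \geq 0$. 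Writing
\[
L_{\mathbold{\lambda_k}}^{\overline{s}_{\mathbold{\lambda_k}}} T_{\mathbold{\lambda_k}}^{s^* - \overline{s}_{\mathbold{\lambda_k}}} = T_{\mathbold{\lambda_k}}^{s^*} \Bigl( L_{\mathbold{\lambda_k}}/T_{\mathbold{\lambda_k}} \Bigr)^{\overline{s}_{\mathbold{\lambda_k}}},
\]
and using $L/T \geq 1$ with $\overline{s}_{\mathbold{\lambda_k}} \leq t_A$, this is bounded by $A_{\mathbold{\lambda_k}}^{t_A} B_{\mathbold{\lambda_k}}^{s^* - t_A}$. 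The symmetric bound $B_{\mathbold{\lambda_k}}^{t_B} A_{\mathbold{\lambda_k}}^{s^* - t_B}$ holds for $B$-sequences.

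Next, I would extend each restricted sum to all of $D^k$ (only enlarging it) and exploit the product structure $A_{\mathbold{\lambda_k}} = \prod_l a_{i_l}$, $B_{\mathbold{\lambda_k}} = \prod_l b_{j_l}$ to factor:
\[
\sum_{\mathbold{\lambda_k} \in D^k} A_{\mathbold{\lambda_k}}^{t_A} B_{\mathbold{\lambda_k}}^{s^* - t_A} = \Bigl( \sum_{(i,j)\in D} a_i^{t_A} b_j^{s^* - t_A} \Bigr)^{\!k},
\]
and similarly for the $B$-sum. By the defining equations (\ref{DADB}) and monotonicity in $s$, each base is $\leq 1$ at $s = s^*$. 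Consequently $\Psi_{\underline{t},k}^{s^*} \leq 2$, and taking $k$-th roots gives $P_{\underline{t}}(s^*) \leq 1$, which completes the proof.

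The main obstacle is the bookkeeping in the splitting step: making sure the inequality $(L/T)^{\overline{s}} \leq (L/T)^{t_A}$ goes in the right direction (which needs both $L/T \geq 1$ and the nonnegativity of $s^* - \overline{s}_{\mathbold{\lambda_k}}$), and verifying the preliminary estimates $D_A \geq t_A$, $D_B \geq t_B$ so the exponents on $T$ remain nonnegative. Once these are in place, the factorisation and the definitions of $D_A, D_B$ do all the work.
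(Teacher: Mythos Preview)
Your proof is correct and takes essentially the same approach as the paper: split $\Psi_{\underline{t},k}^{s^*}$ according to $A$- and $B$-sequences, bound $\overline{s}_{\mathbold{\lambda_k}}$ by $t_A$ or $t_B$ via Remark \ref{remUpB}, enlarge each partial sum to all of $D^k$, factorize, and conclude via the monotonicity of $P_{\underline{t}}$. One small remark: your preliminary check $D_A \geq t_A$, $D_B \geq t_B$ is not actually needed for the inequality $(L/T)^{\overline{s}} \leq (L/T)^{t_A}$ (which follows from $L/T \geq 1$ and $\overline{s} \leq t_A$ alone), while the paper instead records the complementary inequality $\max(D_A,D_B) \leq t_A + t_B$ at this point for later use.
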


\begin{proof}
For each $k \geq 1$ we consider the partition of $D^{k}$ into the sets of $A$-sequences and $B$-sequences:
\begin{equation*}
\mathcal{A}_k=\big \lbrace \mathbold{\lambda_{k}} \in D^{k}   \text{ such that }   A_{\mathbold{\lambda_{k}}}\geq B_{\mathbold{\lambda_{k}}} \big \rbrace \qquad \mathcal{B}_{k}=D^{k} \setminus \mathcal{A}_k.\\
\end{equation*}

\noindent Then, by Remark \ref{remUpB}, for any $\underline{t} \in A $ the function $P_{\underline{t}}$ can be written as and bounded by 
\begin{equation}\label{boundP}
\begin{split}
P_{\underline{t}}(s) &=\lim_{k \to \infty} \left( \sum_{\mathbold{\lambda_{k}} \in \mathcal{A}_k} A_{\mathbold{\lambda_{k}}}^{\overline{s}_{\mathbold{\lambda_{k}}}} \, \,  B_{\mathbold{\lambda_{k}}}^{s-\overline{s}_{\mathbold{\lambda_{k}}}} + \sum_{\mathbold{\lambda_{k}} \in \mathcal{B}_k} B_{\mathbold{\lambda_{k}}}^{\overline{s}_{\mathbold{\lambda_{k}}}} \, \,  A_{\mathbold{\lambda_{k}}}^{s-\overline{s}_{\mathbold{\lambda_{k}}}} \right)^{1/k}\\
& \leq \lim_{k \to \infty} \left( \sum_{\mathbold{\lambda_{k}} \in \mathcal{A}_k} A_{\mathbold{\lambda_{k}}}^{t_A} \, \,  B_{\mathbold{\lambda_{k}}}^{s-t_A} + \sum_{\mathbold{\lambda_{k}} \in \mathcal{B}_k} B_{\mathbold{\lambda_{k}}}^{t_B} \, \,  A_{\mathbold{\lambda_{k}}}^{s-t_B} \right)^{1/k}.
\end{split}
\end{equation}

\noindent Note that by equations (\ref{TB}) it holds
\begin{equation}\label{minus1}
\sum_{(i,j)\in D} a_{i}^{t_{A}}b_{j}^{t_{B}}=\sum_{i \in \overline{D}_{X}} a_{i}^{t_{A}} \sum_{(i,j)\in I_i} b_{j}^{t_{B}} \leq \left(\sum_{i \in \overline{D}_{X}} a_{i}^{t_{A}} \right)\left(\sum_{j \in \overline{D}_{Y} } b_{j}^{t_{B}} \right)=1,  
\end{equation}
which by (\ref{DADB}) implies $D_A \leq t_A+ t_B $. The same reasoning applies to $D_B$, and therefore we have
\begin{equation} \label{desig}
\max(D_A, D_B)\leq t_A+ t_B.
\end{equation}

\noindent If we define $$\alpha^{s}_k := \max \left\{ \sum_{\mathbold{\lambda_{k}} \in D^{k}} A_{\mathbold{\lambda_{k}}}^{t_A} \, \,  B_{\mathbold{\lambda_{k}}}^{s-t_A},  \sum_{\mathbold{\lambda_{k}} \in D^{k}} B_{\mathbold{\lambda_{k}}}^{t_B} \, \,  A_{\mathbold{\lambda_{k}}}^{s-t_B} \right\},$$ then for all $s\leq t_A+t_B$ it holds

\begin{equation} \label{eqq}
\alpha^{s}_k \leq \sum_{\mathbold{\lambda_{k}} \in \mathcal{A}_k} A_{\mathbold{\lambda_{k}}}^{t_A} \, \,  B_{\mathbold{\lambda_{k}}}^{s-t_A} + \sum_{\mathbold{\lambda_{k}} \in \mathcal{B}_k} B_{\mathbold{\lambda_{k}}}^{t_B} \, \,  A_{\mathbold{\lambda_{k}}}^{s-t_B} \leq 2\alpha^{s}_k,
\end{equation}
where the second inequality is obvious as all terms of  both sums are positive, while the first one follows from  
\begin{equation*}
 A_{\mathbold{\lambda_{k}}}^{t_A} \, \,  B_{\mathbold{\lambda_{k}}}^{s-t_A} \leq B_{\mathbold{\lambda_{k}}}^{t_B} \, \,  A_{\mathbold{\lambda_{k}}}^{s-t_B} \,\Longleftrightarrow \, B_{\mathbold{\lambda_{k}}}^{s-t_A-t_B} \leq   A_{\mathbold{\lambda_{k}}}^{s-t_B-t_A}\, \Longleftrightarrow\, B_{\mathbold{\lambda_{k}}}\geq A_{\mathbold{\lambda_{k}}} \, \Longleftrightarrow\, \mathbold{\lambda_{k}} \in \mathcal{B}_{k}.    
\end{equation*}\\
\noindent Furthermore, it is easy to see that 
$$\sum_{\mathbold{\lambda_{k}} \in D^{k}} A_{\mathbold{\lambda_{k}}}^{t_A} \, \,  B_{\mathbold{\lambda_{k}}}^{s-t_A}= \left( \sum_{(i,j) \in D} a_i^{t_A} \, \,  b_{j}^{s-t_A} \right)^{k} \,\quad \text{and} \,\quad \sum_{\mathbold{\lambda_{k}} \in D^{k}} B_{\mathbold{\lambda_{k}}}^{t_B} \, \,  A_{\mathbold{\lambda_{k}}}^{s-t_B}= \left(  \sum_{(i,j) \in D }  b_j^{t_B} \, \,  a_{i}^{s-t_B} \right)^{k},$$
as $D^{k}$ comprises all possible combinations of length $k$ of elements in $D$, and hence
$$\alpha^{s}_k =  \left( \max \left\{\sum_{(i,j) \in D} a_i^{t_A} \, \,  b_{j}^{s-t_A}, \sum_{(i,j) \in D }  b_j^{t_B} \, \,  a_{i}^{s-t_B}  \right\}\right)^{k}.$$ \\

\noindent Thus, by this and equation (\ref{DADB}), $\lim_{k \to \infty} (\alpha_k^{s})^{1/k}=1$ when $s= \max(D_A, D_B) $. Then by equations (\ref{desig}), (\ref{eqq}) and (\ref{boundP}), and since $P_{\underline{t}}(s)$ is strictly decreasing on $[0,\infty)$ (see \cite[Lemma 2.2]{Boxlike}), $P_{\underline{t}}(\tilde{s})=1$ for some $\tilde{s}\leq \max(D_A, D_B)$, and the result follows from Theorem \ref{main}. 

\end{proof}

\subsection{Lower bound}
Let $F$ be a fixed Bara\'{n}ski carpet. We can assume without loss of generality that 
\begin{equation}\label{max}
\dim_B F=\max (D_{A}, D_{B})=D_A.\\
\end{equation}

We will follow a similar argument to that in Section \ref{sectionLower}. We start by approximating each system in our parametric family of carpets by a sequence $\mathcal{I}_k$ of (possibly overlapping) Bedford-McMullen-type systems of parameters $(m_k, n_k)$ with uniform fibres, this time using a different vector of weights. In order to perform the right further approximations, we need to show that $\max (D_{A}, D_{B})=D_A$ implies $m_k\geq n_k$. For that purpose and ispired by \cite[Section 6]{Baranski},  we will use $\delta$-covers of $D^{\mathbb{N}}$, induce Bernoulli measures on them, and show that the measures will be mostly concentrated on the level sets for which $A_{\mathbold{\lambda_{M_\delta}}}\geq B_{\mathbold{\lambda_{M_\delta}}}$. Then the result will follow in Lemma \ref{ngeqm}.\\

Again, to each $\mathcal{I}_k$ we will associate a number $s_k$ and prove in Lemma \ref{sktoDB} that these are increasingly good approximations of $\dim_B(F)$. Then the key step towards the proof of Theorem \ref{THMhaus} is Lemma \ref{lemmaB}. It provides us, for any translation vector outside the $E$ defined in (\ref{setE}), with subsystems satisfying the OSC defined as approximations of iterations of the systems $\mathcal{I}_k$ . These subsystems will have  ``enough maps'' as to prove in Proposition \ref{lowerB} that the $s_k$ were also lower bounds of $\dim_B(F_{\underline{t}})$ for all $\underline{t} \in A\setminus E$.\\

\noindent Let $\textbf{p}\in\mathbb{P}^{|D|}$ with coordinates 
\begin{equation*}
p_{i j}=a_{i}^{t_{A}}b_{j}^{D_{A}-t_{A}}.
\end{equation*}\\
\noindent For $k\in\mathbb{N}$, set $\theta(k) = \sum_{(i,j)\in D} \lceil k p_{i j} \rceil$, and define
\begin{equation*}
 \Gamma_k= \Bigg\{
 \begin{tabular}{c}
 $\mathbold{\lambda_{k}}=(\lambda_{1},\lambda_{2},\ldots ,\lambda_{\theta(k)})\in D^{\theta(k)} : \text{ for all } (i,j) \in D,$ \\
 $ |\lbrace l \in \lbrace 1, \ldots,\theta(k) \rbrace : \lambda_{l}=(i,j) \rbrace|= \lceil k p_{i j}\rceil  $
 \end{tabular}
 \Bigg \}.
\end{equation*}
For each $\underline{t}\in A$ the set $\Gamma_k$ defines an IFS 
\begin{equation}\label{dbcarpet2}
\mathcal{I}_k:=\big \{ S_{\underline{t},\mathbold{\lambda_{k}}}\big \}_{\mathbold{\lambda_{k}} \in \Gamma_k}
\end{equation}
with uniform fibres. Let us denote the attractor associated to $\mathcal{I}_k$ by  $\Lambda_k$. By construction, $\Lambda_k \subset F_{\underline{t}}$, and the linear part of each map on $\mathcal{I}_k$ is given by
\begin{equation}\label{mn}
 \text{diag}\left( \prod_{(i,j)\in D} a_{i}^{\lceil k p_{i j} \rceil},\prod_{(i,j)\in D} b_{j}^{ \lceil k p_{i j} \rceil} \right)=:\text{diag}(m_k^{-1},n_k^{-1}).
\end{equation}

Since $m_k$ and $n_k$ are not necessarily integers, we have that $\mathcal{I}_k$ generates a Bedford-McMullen-type carpet. We shall see now, using ideas from \cite[Section 6]{Baranski}, that the assumption $D_A \geq D_B$ made in (\ref{max}) implies $m_k \leq n_k.$ \\

Observe that after some iteration, the $k$-level sets of our construction will be rectangles of different sizes. In order to get some control on the length of their shorter edge, we define a cover of $D^{\mathbb{N}}$ by cylinders of different levels $l$ such that $T_{\mathbold{\lambda_{l}}} \leq \delta$ for $\mathbold{\lambda}\in C_{\mathbold{\lambda_{l}}}$ and any fixed $\delta>0$. \\

\begin{defn} For a fixed $\delta>0$ and  each $\mathbold{\lambda}  \in D^{\mathbb{N}}$ define 
\[ M_{\delta}= M_{\delta}(\mathbold{\lambda})=\min \lbrace l: T_{\mathbold{\lambda_{l}}} \leq \delta \rbrace, \qquad \qquad  \mathcal{V}_\delta =\Big\{ \mathcal{C}_{\mathbold{\lambda_{M_\delta}}} : \mathbold{\lambda}\in D^{\mathbb{N} }\Big\} \]

\[\mathcal{V}_{\delta}^{(A)}=\big \{ \mathcal{C}_{\mathbold{\lambda_{M_\delta}} }\in \mathcal{V}_\delta : 2A_{\mathbold{\lambda_{M_\delta}}}\geq  B_{\mathbold{\lambda_{M_\delta}}} \big \} \qquad \mathcal{V}_{\delta}^{(B)}=\mathcal{V}_{\delta} \setminus \mathcal{V}_{\delta}^{(A)}. \]
\end{defn}

\begin{rem} \label{remcover}
$\mathcal{V}_\delta$ is a cover of $ D^{\mathbb{N}}$ consisting of pairwise disjoint sets. Being a cover follows from its definition, since $\mathcal{V}_\delta$ contains the cylinders associated to all $\mathbold{\lambda}\in D^{\mathbb{N}}$. To see that the sets are disjoint suppose $\mathbold{\gamma}\in  \mathcal{C}_{\mathbold{\lambda_{M_\delta}} } \cap \mathcal{C}_{\mathbold{\lambda'_{M'_\delta}} }$ and assume $M_\delta \leq M'_\delta$. Then it must occur $\mathbold{\gamma_{M_\delta}}=\mathbold{\lambda_{M_\delta}}=\mathbold{\lambda'_{M_\delta}}$, but by definition of $M'_\delta$ we have $M_\delta=M'_\delta$. \\
\end{rem}

\noindent By definition of $D_A$ we have
$$\sum_{\mathbold{\lambda_1}=(i_1,j_1)}^{(i_d,j_d)} A_{\mathbold{\lambda_1}}^{t_{A}}B_{\mathbold{\lambda_1}}^{D_{A}-t_{A}}=1,$$which implies that for all $k\geq 1$ and all $\mathbold{\lambda_{k}} \in D^k$
\begin{equation} \label{rel1}
\sum_{\lambda_{k+1}=(i_1,j_1)}^{(i_d,j_d)} A_{\mathbold{\lambda_{k+1}}}^{t_{A}}B_{\mathbold{\lambda_{k+1}}}^{D_{A}-t_{A}}= A_{\mathbold{\lambda_{k}}}^{t_{A}}B_{\mathbold{\lambda_{k}}}^{D_{A}-t_{A}},
\end{equation} 
where $\lambda_{k+1}$ is the last coordinate of the vector $\mathbold{\lambda_{k+1}}=(\lambda_{1},\ldots,\lambda_{k+1})$.  Observe that if $M_\delta(\tilde{\mathbold{\lambda}})=\max_{\mathbold{\lambda} \in D^\mathbb{N}}\lbrace M_\delta(\mathbold{\lambda})\rbrace$, the cylinders $[\tilde{\mathbold{\lambda}}_{M_\delta -1}, \lambda']$ must also belong to $\mathcal{V}_\delta$ for all $\lambda'\neq \lambda_{M_\delta}$, and then we can iteratively apply the previous relation (\ref{rel1}) to get 
\begin{equation*} \label{vdelta}
\sum_{\mathcal{C}_{\mathbold{\lambda_{M_\delta}} }\in \mathcal{V}_\delta} A_{\mathbold{\lambda_{M_\delta}}}^{t_{A}}B_{\mathbold{\lambda_{M_\delta}}}^{D_{A}-t_{A}}=1.
\end{equation*}
Analogously, 
\begin{equation*}
\sum_{\mathcal{C}_{\mathbold{\lambda_{M_\delta}}} \in \mathcal{V}_\delta} B_{\mathbold{\lambda_{M_\delta}}}^{t_{B}}A_{\mathbold{\lambda_{M_\delta}}}^{D_{B}-t_{B}}=1.
\end{equation*}

\noindent We can rewrite these equations in terms of the partition $\lbrace \mathcal{V}_\delta^{(A)}, \mathcal{V}_\delta^{(B)} \rbrace$  of   $ \mathcal{V}_\delta$:
\begin{equation}  \label{vdelta2}
  \begin{aligned}
\sum_{\mathcal{C}_{\mathbold{\lambda_{M_\delta}}}\in \mathcal{V}_\delta^{(A)}} A_{\mathbold{\lambda_{M_\delta}}}^{t_{A}}B_{\mathbold{\lambda_{M_\delta}}}^{D_{A}-t_{A}} + \sum_{\mathcal{C}_{\mathbold{\lambda_{M_\delta}}}\in \mathcal{V}_\delta^{(B)}} A_{\mathbold{\lambda_{M_\delta}}}^{t_{A}}B_{\mathbold{\lambda_{M_\delta}}}^{D_{A}-t_{A}}=1, \\[15pt]
\sum_{\mathcal{C}_{\mathbold{\lambda_{M_\delta}}} \in \mathcal{V}_\delta^{(A)}} B_{\mathbold{\lambda_{M_\delta}}}^{t_{B}}A_{\mathbold{\lambda_{M_\delta}}}^{D_{B}-t_{B}}+ \sum_{\mathcal{C}_{\mathbold{\lambda_{M_\delta}}} \in \mathcal{V}_\delta^{(B)}} B_{\mathbold{\lambda_{M_\delta}}}^{t_{B}}A_{\mathbold{\lambda_{M_\delta}}}^{D_{B}-t_{B}}=1.
  \end{aligned}
\end{equation} 

We will assume from now on that $t_A+t_B >D_A \geq D_B$, and will deal with the easier case of $t_A+t_B=D_A$ at the end of the proof of Proposition \ref{lowerB}. By equation (\ref{desig}) and the definition of $M_\delta$, for every $\mathcal{C}_{\mathbold{\lambda_{M_\delta}}} \in \mathcal{V}_\delta^{(B)}$ we get
\begin{equation*} \label{eqcomparison}
\frac{A_{\mathbold{\lambda_{M_\delta}}}^{t_{A}}B_{\mathbold{\lambda_{M_\delta}}}^{D_{A}-t_{A}}}{B_{\mathbold{\lambda_{M_\delta}}}^{t_{B}}A_{\mathbold{\lambda_{M_\delta}}}^{D_{B}-t_{B}}}=\frac{A_{\mathbold{\lambda_{M_\delta}}}^{t_{A} +t_{B}-D_B}}{B_{\mathbold{\lambda_{M_\delta}}}^{t_{B} +t_{A}-D_A}}=\left(\frac{A_{\mathbold{\lambda_{M_\delta}}}}{B_{\mathbold{\lambda_{M_\delta}}}}\right)^{t_{A} +t_{B}-D_A} \!\!\!\! \! A_{\mathbold{\lambda_{M_\delta}}}^{D_A-D_B}< \left(\frac{1}{2}\right)^{t_{A} +t_{B}-D_A}<1.
\end{equation*}

\noindent Thus, by this and equations (\ref{vdelta2}) we have
\begin{equation*}
\sum_{\mathcal{C}_{\mathbold{\lambda_{M_\delta}}} \in \mathcal{V}_\delta^{(B)}} A_{\mathbold{\lambda_{M_\delta}}}^{t_{A}}B_{\mathbold{\lambda_{M_\delta}}}^{D_{A}-t_{A}} \leq \left(\frac{1}{2}\right)^{t_{A} +t_{B}-D_A} \!\!\!\! \!\!\!\! \sum_{\mathcal{C}_{\mathbold{\lambda_{M_\delta}}} \in \mathcal{V}_\delta^{(B)}}B_{\mathbold{\lambda_{M_\delta}}}^{t_{B}}A_{\mathbold{\lambda_{M_\delta}}}^{D_{B}-t_{B}} \leq \left(\frac{1}{2}\right)^{t_{A} +t_{B}-D_A},
\end{equation*}
that again by (\ref{vdelta2}) imply 
\begin{equation}  \label{vdelta3}
\sum_{\mathcal{C}_{\mathbold{\lambda_{M_\delta}}}\in \mathcal{V}_\delta^{(A)}} A_{\mathbold{\lambda_{M_\delta}}}^{t_{A}}B_{\mathbold{\lambda_{M_\delta}}}^{D_{A}-t_{A}} \geq 1- \left(\frac{1}{2}\right)^{t_{A} +t_{B}-D_A}.
\end{equation} 

\noindent If we now define in $ D^{\mathbb{N}}$ the Bernouilli measure $\mu$  given by the vector of weights  
\begin{equation*}
p_{i j}=a_{i}^{t_{A}}b_{j}^{D_{A}-t_{A}},
\end{equation*}
we have by Remark \ref{remcover} and equation (\ref{vdelta3}) 
\begin{equation}\label{measureVd}
\mu (\mathcal{V}_\delta^{(A)})= \sum_{\mathcal{C}_{\mathbold{\lambda_{M_\delta}} }\in \mathcal{V}_\delta^{(A)}}  \mu ( \mathcal{C}_{\mathbold{\lambda_{M_\delta}} })   =  \sum_{\mathcal{C}_{\mathbold{\lambda_{M_\delta}} }\in \mathcal{V}_\delta^{(A)}}    A_{\mathbold{\lambda_{M_\delta}}}^{t_{A}}B_{\mathbold{\lambda_{M_\delta}}}^{D_{A}-t_{A}} \geq 1-\left(\frac{1}{2}\right)^{t_{A} +t_{B}-D_A}.\\
\end{equation}
\bigskip

\begin{lemma} \label{ngeqm} If $t_A+t_B > D_A \geq D_B$ then  
$m_k \leq n_k$ for $k$ large enough.
\end{lemma}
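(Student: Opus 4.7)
The plan is to translate $m_k\leq n_k$ into the comparison $\mathbb{E}_\mu[\log a_i]\geq \mathbb{E}_\mu[\log b_j]$ under the Bernoulli measure $\mu$ on $D^{\mathbb{N}}$ with weights $p_{ij}=a_i^{t_A}b_j^{D_A-t_A}$ introduced for (\ref{dbcarpet2}), and to extract this comparison from the uniform lower bound on $\mu(\mathcal{V}_\delta^{(A)})$ already established in (\ref{measureVd}) by combining it with the Strong Law of Large Numbers.

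First I would rewrite (\ref{mn}). Using $\lceil k p_{ij}\rceil = k p_{ij}+O(1)$ and $\sum_{(i,j)\in D} p_{ij}=1$, one has $\tfrac{1}{k}\log m_k = -\mathbb{E}_\mu[\log a_i]+O(1/k)$ and $\tfrac{1}{k}\log n_k = -\mathbb{E}_\mu[\log b_j]+O(1/k)$. Therefore, as soon as the strict inequality $\mathbb{E}_\mu[\log a_i]>\mathbb{E}_\mu[\log b_j]$ holds, the leading term dominates the $O(1)$ corrections and $m_k\leq n_k$ for all sufficiently large $k$. The core of the proof is thus to establish this strict inequality between the expectations.

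To do so I would argue by contradiction, assuming $\mathbb{E}_\mu[\log b_j]>\mathbb{E}_\mu[\log a_i]$. Applying the Strong Law of Large Numbers to the iid sequences $(\log a_{i_l})_{l\geq 1}$ and $(\log b_{j_l})_{l\geq 1}$ under $\mu$ gives, for $\mu$-almost every $\mathbold{\lambda}\in D^{\mathbb{N}}$, that $M^{-1}\log(B_{\mathbold{\lambda_M}}/A_{\mathbold{\lambda_M}})$ converges to the strictly positive value $\mathbb{E}_\mu[\log b_j]-\mathbb{E}_\mu[\log a_i]$. Since $T_{\mathbold{\lambda_M}}\to 0$ exponentially fast $\mu$-a.s., one also has $M_\delta(\mathbold{\lambda})\to\infty$ as $\delta\to 0$; combining the two yields $B_{\mathbold{\lambda_{M_\delta}}}/A_{\mathbold{\lambda_{M_\delta}}}\to +\infty$ for $\mu$-a.e.\ $\mathbold{\lambda}$, so the cylinder $\mathcal{C}_{\mathbold{\lambda_{M_\delta}}}$ eventually fails the condition $2A\geq B$ and hence leaves $\mathcal{V}_\delta^{(A)}$. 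Dominated convergence then forces $\mu(\mathcal{V}_\delta^{(A)})\to 0$, which contradicts (\ref{measureVd}): the lower bound $1-(1/2)^{t_A+t_B-D_A}$ is strictly positive and uniform in $\delta$ thanks to the hypothesis $t_A+t_B>D_A$.

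The main obstacle I anticipate is the borderline case $\mathbb{E}_\mu[\log a_i]=\mathbb{E}_\mu[\log b_j]$, in which $\log(B_{\mathbold{\lambda_M}}/A_{\mathbold{\lambda_M}})$ is a centred random walk and the above argument no longer forces $\mu(\mathcal{V}_\delta^{(A)})\to 0$. I would handle it by bringing in the as-yet-unused hypothesis $D_A\geq D_B$ and by sharpening the measure estimate: replacing the threshold $2$ in the definition of $\mathcal{V}_\delta^{(A)}$ by an arbitrary $c>1$ yields the analogous bound $1-c^{-(t_A+t_B-D_A)}$, which tends to $1$ as $c\to\infty$, and together with the recurrence properties of the centred random walk this should preclude equality. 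In practice, equality forces a tight balance between the rows and columns that makes the finite-$k$ discrepancies $\log m_k-\log n_k$ themselves bounded and of the correct sign, which is still enough to conclude $m_k\leq n_k$ for $k$ large enough.
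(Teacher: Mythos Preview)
Your argument follows essentially the same route as the paper's: both reduce $m_k\leq n_k$ to the comparison of the expectations $\mathbb{E}_\mu[\log a_i]$ and $\mathbb{E}_\mu[\log b_j]$, assume the reverse strict inequality, use almost-sure convergence of the ergodic averages (you cite the SLLN, the paper cites Birkhoff) to force $\mu$-a.e.\ cylinder eventually into $\mathcal{V}_\delta^{(B)}$, and then pass to a measure statement (you via dominated convergence, the paper via Egorov) to contradict (\ref{measureVd}). The only substantive difference is that you explicitly flag the borderline case $\mathbb{E}_\mu[\log a_i]=\mathbb{E}_\mu[\log b_j]$, which the paper's proof does not address; your sketch for that case is not fully worked out (the ``recurrence'' and ``correct sign'' remarks are vague), but it goes beyond what the paper provides.
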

\begin{proof}
The definition of $m_k$ and $n_k$ in (\ref{mn}) imply
\begin{equation*}
\frac{\log m_k}{k}= - \! \! \!\sum_{(i,j)\in D}  \! \! \!p_{ij} \log a_{i}  + o(1) \qquad \frac{\log n_k}{k}= -\! \! \!\sum_{(i,j)\in D} \! \! \! p_{ij} \log b_j  + o(1).\\
\end{equation*} 

\noindent Let us consider the functions $f, g: D ^{\mathbb{N}} \rightarrow \mathbb{R} $ given by $f(\mathbold{\lambda})= -\log a_{\lambda_{0}}$, $g(\mathbold{\lambda})= -\log b_{\lambda_{0}}$, where $ (a_{\lambda_{0}},b_{\lambda_{0}})=(a_i,b_j)$ for $(i,j)=\lambda_0$. Note that both functions belong to $L^{1}({D}^{\mathbb{N}},\mathcal{B}, \mu)$, and since the shift map $\sigma $ in ${D}^{\mathbb{N}}$ is ergodic with respect to Bernoulli measures, we can  apply Birkhoff’s Ergodic Theorem to get 
\begin{equation} \label{birk1}
\begin{split}
- \lim_{n \rightarrow \infty}\dfrac{\log A_{\mathbold{\lambda_{n-1}}}}{n}&=-\lim_{n \rightarrow \infty}\dfrac{ \sum_{j=0}^{n-1} \log a_{\lambda_{j}}}{n }=\lim_{n\rightarrow \infty } \dfrac{1}{n} \sum_{j=0}^{n-1}f(\sigma^{j}(\mathbold{\lambda_{n-1}})) 
=\int_{D^{\mathbb{N}}} -\log a_{\lambda_{0}} d\mu(\mathbold{\lambda})\\
&=-\sum_{(i,j)\in D} \! p_{ij} \log a_{i} = \frac{\log m_k}{k}+ o(1)
\end{split}
\end{equation}
for $\mu$-almost all $\mathbold{\lambda} \in D^{\mathbb{N}}$.
And similarly 
\begin{equation}\label{birk2}
- \lim_{n \rightarrow \infty}\dfrac{\log B_{\mathbold{\lambda_{n-1}}}}{n}=\frac{\log n_k}{k}+ o(1)
\end{equation}
for $\mu$-almost all $\mathbold{\lambda} \in D^{\mathbb{N}}$.\\

Let $X\subset D^{\mathbb{N}}$ be the set of full measure for which equations (\ref{birk1}) and (\ref{birk2}) hold. For the seek of contradiction let us assume that $m_k>n_k$, i.e 
\begin{equation}\label{limitsB}
  \lim_{n \rightarrow \infty}\dfrac{\log (2A_{\mathbold{\lambda_{n-1}}})}{n}=\lim_{n \rightarrow \infty}\dfrac{\log A_{\mathbold{\lambda_{n-1}}}}{n} < \lim_{n \rightarrow \infty}\dfrac{\log B_{\mathbold{\lambda_{n-1}}}}{n} 
\end{equation}
for all $\mathbold{\lambda} \in X$ and $k$ large enough. Observe that by definition, $M_\delta(\mathbold{\lambda})$ converges to infinity when $\delta$ tends to $0$. Let $\delta_k=1/k$. Then, by equation (\ref{limitsB}), for each $\mathbold{\lambda}\in X$ there exists $\delta_{k}(\mathbold{\lambda})$ such that   $\mathcal{C}_{\mathbold{\lambda_{M_{\delta}}}} \in \mathcal{V}_{\delta}^{(B)}$ for all $\delta \leq \delta_{k}(\mathbold{\lambda})$. This means that the characteristic functions 
\[
 \chi_{\mathcal{V}_{\delta_k}^{(B)}}(\mathbold{\lambda_{M_{\delta_k}}}) =
  \begin{cases} 
      \hfill 1    \hfill & \text{ if $\mathcal{C}_{\mathbold{\lambda_{M_{\delta_k}}}} \in \mathcal{V}_{\delta_k}^{(B)}$} \\
      \hfill 0    \hfill & \text{ otherwise} \\
  \end{cases}
\]
converge pointwise to the constant function $1$ in $X$. Thus, by Egorov's Theorem, for all $\epsilon>0$ there is a measurable set $\tilde{X} \subset X$ with
$\mu(\tilde{X}) > 1- \epsilon$ and such that $\chi_{\mathcal{V}_{\delta_k}^{(B)}}$ converges uniformly on $\tilde{X}$, which contradicts (\ref{measureVd}). \\
\end{proof}

\noindent Let us define 
\begin{equation*}
s_{k}:=t_A + \frac{\log \vert \Gamma
_{k} \vert -t_A\log m_k }{ \log n_k}.
\end{equation*} 
\vspace{1pt}
\begin{lemma}\label{sktoDB}
For every $\underline{t} \in A$ there exists a sequence of Bedford-McMullen-type systems $\mathcal{I}_{k}$ with uniform fibers and attractors $\Lambda_k \subset F_{\underline{t}}$ for which
$$s_k \longrightarrow D_A$$
as $k$ tends to infinity.
\end{lemma}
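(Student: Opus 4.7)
The plan is to mimic the computation done in Lemma \ref{sktogp}, exploiting the fact that the target dimension here is $D_A$ rather than $\max_{\mathbf{q}} g(\mathbf{q})$, and that the probability vector $\mathbf{p}$ has the explicit factorised form $p_{ij}=a_i^{t_A}b_j^{D_A-t_A}$, which simplifies the calculation dramatically.

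First, I would apply Stirling's formula (\ref{stirh}) to the multinomial expression for $|\Gamma_k|$ given in (\ref{cardinals}). Since $\theta(k) = k + o(k)$ and $\lceil k p_{ij}\rceil = k p_{ij}+o(k)$, the same computation as in (\ref{lim11}) yields
\begin{equation*}
\lim_{k\to\infty}\frac{\log|\Gamma_k|}{k} = -\sum_{(i,j)\in D} p_{ij}\log p_{ij}.
\end{equation*}
Directly from the definition of $m_k, n_k$ in (\ref{mn}), one also has
\begin{equation*}
\lim_{k\to\infty}\frac{\log m_k}{k} = -\sum_{(i,j)\in D} p_{ij}\log a_i, \qquad \lim_{k\to\infty}\frac{\log n_k}{k} = -\sum_{(i,j)\in D} p_{ij}\log b_j,
\end{equation*}
both of which are strictly positive.

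Next, I would use the explicit formula $p_{ij}=a_i^{t_A}b_j^{D_A-t_A}$ to get $\log p_{ij}= t_A\log a_i + (D_A-t_A)\log b_j$, and therefore
\begin{equation*}
-\sum_{(i,j)\in D} p_{ij}\log p_{ij} + t_A\sum_{(i,j)\in D} p_{ij}\log a_i = -(D_A-t_A)\sum_{(i,j)\in D} p_{ij}\log b_j.
\end{equation*}
Dividing both sides of this identity by $-\sum p_{ij}\log b_j = \lim_k (\log n_k)/k$ and combining with the limit of $(\log|\Gamma_k| - t_A\log m_k)/k$, I obtain
\begin{equation*}
\lim_{k\to\infty} s_k = t_A + \lim_{k\to\infty}\frac{\log|\Gamma_k|-t_A\log m_k}{\log n_k} = t_A + (D_A-t_A) = D_A,
\end{equation*}
as required. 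Finally, the inclusion $\Lambda_k\subset F_{\underline{t}}$ and the fact that $\mathcal{I}_k$ has uniform fibres are immediate from the construction of $\Gamma_k$ in (\ref{dbcarpet2}) (the uniformity being shown exactly as in the discussion following (\ref{BMcarpet})), so no further work is needed on those points.

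There is no real obstacle here: the statement is essentially a Stirling-formula calculation, and the specific choice of weights $p_{ij}=a_i^{t_A}b_j^{D_A-t_A}$ makes the algebra collapse cleanly, unlike in Lemma \ref{sktogp} where the general vector $\mathbf{p}$ forces one to carry both terms of $g(\mathbf{p})$ through the computation.
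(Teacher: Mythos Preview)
Your proposal is correct and follows essentially the same approach as the paper's own proof: both apply Stirling's formula to obtain the limit of $\log|\Gamma_k|/k$, read off the limits of $\log m_k/k$ and $\log n_k/k$ from the definition, and then exploit the factorised form $p_{ij}=a_i^{t_A}b_j^{D_A-t_A}$ (equivalently $\log p_{ij}=t_A\log a_i+(D_A-t_A)\log b_j$) to collapse the expression for $\lim_k s_k$ to $t_A+(D_A-t_A)=D_A$. The only cosmetic difference is that the paper writes the key identity as $p_{ij}/a_i^{t_A}=b_j^{D_A-t_A}$ before taking logarithms inside the quotient, whereas you take logs first; the algebra is identical.
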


\begin{proof}
Recall that by Stirling's formula we got equation (\ref{lim11}):
\begin{equation*}
\lim_{k\rightarrow \infty} \dfrac{\log |\Gamma_k |}{k}=- \! \! \!\sum_{(i,j)\in D} p_{i j} \log p_{i j}, 
\end{equation*}
and by the definition of $m_k$ and $n_k$,  
\begin{equation*}
\log m_k= - k \! \! \!\sum_{(i,j) \in D}  \! \! \!p_{ij}\log a_{i}  + o(k) \qquad \log n_k= -k \! \! \! \sum_{(i,j) \in D}  \! \! \!p_{ij}\log  b_j  + o(k)
\end{equation*}
Our choice of the vector $\textbf{p}$ implies $p_{ij}/a_i^{t_A}=b_j^{D_A-t_A}$ for all $(i,j) \in D$. Thus, 
\begin{equation*}
\lim_{k\rightarrow \infty}s_k = t_A+ \dfrac{\sum_{(i,j) \in D}  p_{ij}\log \left( \frac{p_{ij}}{a_{i}^{t_A}}\right) }{\sum_{(i,j) \in D} p_{ij}\log b_{j}}=t_A+  \frac{(D_A-t_A) \sum_{(i,j) \in D}  p_{ij}\log b_{j}}{\sum_{(i,j) \in D} p_{ij}\log b_{j}}=D_A
\end{equation*}
\end{proof}

\begin{rem} We emphasize that we are not claiming $s_k$ to be the box-counting dimension of the approximating carpets $\Lambda_k$. In fact, when the system is free of overlapping rows, this would already conflict with Fraser-Shmerkin's Theorem \ref{FSthm}.\\

Moreover, due to a possible drop on projected dimensions, the box dimension of the approximation systems might be smaller than the target one. Therefore, unlike in Section \ref{sectionHaus}, we won't be able to approximate by a system without rows overlapping and conclude by applying Fraser-Shmerkin's Theorem. Instead, we will recreate the argument on \cite[Section 6]{FS} to get subsystems satisfying the OSC, and then, instead of computing the dimensions of their associated attractors, we will consider in equation (\ref{lastsets}) images of our original attractor $F_{\underline{t}}$ under these maps, guaranteeing then the maximal projection dimension.\\
\end{rem}

Let $E$ be the set defined in (\ref{setE}). Then for any translation vector outside this set we can approximate the corresponding system by a subsystem satisfying the OSC and with ``enough maps''. The idea of the proof is the following: we apply Lemma \ref{G21} to the systems $\mathcal{I}_{k}$ in order to get new systems $\mathcal{L}_{\ell}$ which can only possibly have columns overlapping.  Then we approximate to a system $\mathcal{I}_{u}$ with uniform fibres which will be projected onto the horizontal axis in order to apply Hochman's results and Lemma \ref{vitali}. We will look at the new 1-dimensional system satisfying the SSC and consider the maps of the iteration of $\mathcal{I}_{u}$ whose projection belongs to it. Such system will satisfy the OSC and we will get a lower bound for its number of maps:

\begin{lemma} \label{lemmaB} Let $\underline{t} \in A \setminus E$ and $\mathcal{I}_{k}$ be the Bedford-McMullen-type system with uniform fibres defined in equation (\ref{dbcarpet2}). Then given $\epsilon > 0$ there exists $q_{0} \in \mathbb{N}$  such that for all $q \geq q_{0}$ we can define a new system $\mathcal{Q}_{q}=\lbrace S_{\underline{t},\lambda} \rbrace_{\lambda \in U_{q}}$ with $U_{q}\subseteq D^{\theta(k)q}$ that satisfies the OSC and so that
\begin{equation*}
\vert U_{q} \vert \geq 9^{-1}(m_{k} n_k)^{-q \epsilon} \vert \Gamma_{k}\vert^{q}e^{-\epsilon q \log n_k}.
\end{equation*}
\end{lemma}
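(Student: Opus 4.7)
The plan follows the four-step strategy announced in the remark preceding the statement: first remove overlapping rows using Lemma \ref{G21}, then restore uniform fibres through a multinomial construction, then apply Hochman's Theorem \ref{H1} and Lemma \ref{vitali} to the horizontal projection to separate the columns, and finally reassemble a two-dimensional sub-IFS satisfying the OSC. Throughout I fix auxiliary positive parameters $\epsilon_1,\epsilon_2,\epsilon_3$, eventually chosen so that $\epsilon_1+\epsilon_2+2\epsilon_3\leq\epsilon$, and I work within iterates of $\mathcal{I}_k$.

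Step 1 applies Lemma \ref{G21} to $\mathcal{I}_k$ with parameter $\epsilon_1$ to obtain, for each large $\ell$, a subsystem $\mathcal{L}_\ell=\{S_{\underline{t},\mathbold{\lambda}}\}_{\mathbold{\lambda}\in G_{k,\ell}}$ with vertical OSC and $|G_{k,\ell}|\geq 3^{-1}(1/n_k)^{\ell\epsilon_1}|\Gamma_k|^\ell$. Since $\mathcal{L}_\ell$ need not have uniform vertical fibres, Step 2 restores them by the same Stirling/multinomial trick used in the definition (\ref{dbcarpet2}) of $\mathcal{I}_k$: for large $u'$, I keep only those words in $G_{k,\ell}^{u'}$ in which each generator of $\mathcal{L}_\ell$ appears the same number of times, yielding a uniform-fibre sub-IFS $\mathcal{I}_u$ which, via Stirling's formula (\ref{stirh}), satisfies
\[
|\mathcal{I}_u|\geq e^{-\epsilon_3 u'\log n_k}|G_{k,\ell}|^{u'}.
\]

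Step 3 projects $\mathcal{I}_u$ onto the $X$-axis, producing a one-dimensional IFS of similarities with common contraction ratio $m_k^{-\ell u'}$. Because $\underline{t}\notin E$, the IFS $\{\overline{S}_{\underline{t},i}\}_{i\in\overline{D}_X}$ has no SECC; by Remark \ref{tilde} the same holds for $\overline{\mathcal{I}_u^X}$, so Theorem \ref{H1} gives the exact value of the Hausdorff dimension of its attractor, and Lemma \ref{vitali} then furnishes, for sufficiently large $s$, a subset $B_s\subseteq(\overline{\mathcal{I}_u^X})^s$ satisfying the SSC with $|B_s|\geq 3^{-1}m_k^{-\ell u's\epsilon_2}|\overline{\mathcal{I}_u^X}|^s$. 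Step 4 sets $q=\ell u' s$ and defines $U_q\subseteq D^{\theta(k)q}$ to consist of all $\mathbold{\mu}\in\mathcal{I}_u^s$ whose horizontal projection lies in $B_s$. The resulting $\mathcal{Q}_q=\{S_{\underline{t},\mathbold{\mu}}\}_{\mathbold{\mu}\in U_q}$ satisfies OSC because maps with distinct projections are horizontally disjoint by the SSC of $B_s$, while maps sharing a projection inherit the vertical row-separation from $\mathcal{L}_\ell$. The uniform fibres of $\mathcal{I}_u$ give $(|\mathcal{I}_u|/|\overline{\mathcal{I}_u^X}|)^s$ pieces above each element of $B_s$, so combining the three previous estimates,
\[
|U_q|\geq 3^{-1-u's}\,m_k^{-q\epsilon_2}\,n_k^{-q\epsilon_1}\,e^{-\epsilon_3 q\log n_k/\ell}\,|\Gamma_k|^q.
\]
Choosing $\ell$ large enough that $3^{-1/\ell}\geq n_k^{-\epsilon/6}$ absorbs the exponential factor $3^{-u's}=3^{-q/\ell}$ into $e^{-\epsilon q\log n_k/6}$, and for $\epsilon_1,\epsilon_2,\epsilon_3$ small enough this produces the claimed bound with the prefactor $9^{-1}$ (a combination of the two $3^{-1}$ losses from Lemmas \ref{G21} and \ref{vitali}).

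The main obstacle is the uniform-fibre restoration in Step 2: without a Stirling estimate this step could cost an exponential factor in $u'$, swamping the gain from Lemma \ref{vitali}; the subexponential loss afforded by (\ref{stirh}) is precisely what is then absorbed by the $e^{-\epsilon q\log n_k}$ flexibility in the target bound. A secondary conceptual point is that OSC in Step 4 must hold simultaneously in both directions: horizontal separation is provided by the SSC of $B_s$, vertical separation by Lemma \ref{G21}, and the two are made compatible exactly because the uniform fibres of $\mathcal{I}_u$ guarantee that every vertical strip above an element of $B_s$ is filled identically by the row-separated structure descending from $\mathcal{L}_\ell$.
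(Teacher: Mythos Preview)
Your proposal is correct and follows essentially the same four-step route as the paper: Lemma \ref{G21} to separate rows, a multinomial/Stirling pass to restore uniform fibres, Theorem \ref{H1} plus Lemma \ref{vitali} on the $X$-projection to separate columns, and then reassembly into a two-dimensional OSC system with the counting estimate obtained by multiplying the three bounds. Your treatment is in places more explicit than the paper's (you spell out the OSC verification and the absorption of the $3^{-u's}$ factor by taking $\ell$ large), but the argument is the same.
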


\begin{proof}
Firstly, since the choice of the probability vector that defines $\Gamma_k$ in equation (\ref{BMcarpet}) doesn't play any role in the proof of Lemma \ref{G21}, we start by applying such lemma to our system $\mathcal{I}_{k}$ in order to get, for any $\ell \geq \ell_{0} \in \mathbb{N}$ a new system $\mathcal{L}_{\ell}=\lbrace S_{j} \rbrace_{j \in G_{k,\ell}}$ such that $\mathcal{L}_{\ell}^{Y}$ satisfies the OSC. With regard to the new number of maps we have
\begin{equation}\label{ecl2}
\vert G_{k, \ell} \vert \geq 
3^{-1}(1/n_k)^{ \ell \epsilon} \vert \Gamma_{k}\vert^{\ell} .
\end{equation} 
 
\noindent Now we approximate the system $\mathcal{L}_{\ell}$ by a system with uniform fibers: for $u\in\mathbb{N}$, set $\theta(u) = \sum_{\lambda \in G_{k, \ell}} \lceil pu \rceil=N \lceil u/N \rceil$, where $N=\vert G_{k, \ell} \vert$ and $p=1/N$. Note that 
\begin{equation} \label{k-n}
u - N \leq  \theta(u) \leq u  \qquad \text{  for all } u \in \mathbb{N},
\end{equation}
since $\theta(u)=N(\frac{u}{N}-\lbrace \frac{u}{N}\rbrace) \geq N(\frac{u}{N}-1)=u-N$, and the second inequality is trivial. Let us consider the set $ G_{k, \ell}^{\theta(u)}$ and define
\begin{equation*}
 H_{u}= \Bigg\{
 \begin{tabular}{c}
 $\mathbold{\lambda_{u}}=(\lambda_{1},\lambda_{2},\ldots ,\lambda_{\theta(u)})\in G_{k, \ell}^{\theta(u)} : \text{ for all } \lambda \in G_{k, \ell},$ \\
 $ |\lbrace n \in \lbrace 1, \ldots,\theta(u) \rbrace : \lambda_{n}=\lambda \rbrace|=  \lceil pu \rceil  $
 \end{tabular}
 \Bigg \}.
\end{equation*} 

\noindent By equation (\ref{k-n}) we can assume that $\theta (u)=u$. Note that reasoning as when obtaining equation (\ref{cardinals}), the cardinal of the set $H_{u}$ is given by
\begin{equation*} \label{Hu}
|H_{u}|=\dfrac{(N \lfloor u/ N \rfloor)!}{(\lfloor u/N \rfloor!)^{N}}.  
\end{equation*}
As in Lemma \ref{sktogp}, we can estimate its size when $u$ tends to infinity using Stirling's formula, that provided the equality (\ref{lim11}). The substitution $p_{ij}=1/N$ in such formula yields to 
\begin{equation*}\label{lemahu2}
\dfrac{\log|H_{u}|}{u}\rightarrow \log \vert G_{k, \ell} \vert. 
\end{equation*}

\noindent Thus, given $\epsilon >0$, there exists $u_0$ such that for all $u \geq u_0$
\begin{equation*}
\vert H_{u} \vert \geq \vert G_{k, \ell} \vert^{u}e^{-\epsilon u}. 
\end{equation*}\\ 
Let $\mathcal{H}_u:=\big \{ S_{\underline{t},\mathbold{ \lambda_{u}}}\big \}_{\mathbold{\lambda_{u}} \in H_u}$
be the IFS generated by $H_u$, and let $\mathcal{H}^{X}_u$ be its projected system into the horizontal axis, with attractor $\Psi_u^{X}$. Since  $\underline{t} \notin E$, by Theorem \ref{H1} we have that 
\begin{equation*} 
\dim_{B}(\Psi_u^{X})=\frac{\log \vert \overline{H}^{X}_u\vert}{u\ell\log m_k}=:\overline{s}_u^{X},
\end{equation*}
that satisfies $0\leq \overline{s}_u^{X}\leq 1$. 
Using Lemma \ref{vitali}, we can approximate the one-dimensional overlapping self-similar system $\mathcal{I}^{X}_{u}$ by a subsystem satisfying the SSC by assigning to the parameters of the mentioned lemma the values $\alpha= \overline{s}^{X}_{u}$, $a=m_k^{-u\ell }$. In particular there exists $v_{0} \in \mathbb{N}$ so that for $v \geq v_{0} $ we may find  
\[ \overline{U}^{X}_{v} \subset \overline{H}^{X}_u\]
such that the system $ \lbrace \overline{S}_{\underline{t},i}\rbrace_{i \in \overline{U}^{X}_{v}} $  satisfies the SSC, and
\begin{equation*}
\vert \overline{U}^{X}_{v} \vert \geq 3^{-\overline{s}_u^{X}}(1/m_k)^{- vu\ell(\overline{s}_u^{X}-\epsilon)}=3^{-\overline{s}^{X}_{u}}(1/m_k)^{vu\ell \epsilon} \vert \overline{H}^{X}_{u}\vert^{v}
\end{equation*}
since by equation (\ref{alphak}) we have $\vert \overline{H}^{X}_{u}\vert = m_k^{u \ell \overline{s}_u^{X}}$.\\

\noindent We fix any such $v \geq v_0$ and define the set  
\[U_{v}:=\lbrace ((i_{1},j_{1}),\ldots ,(i_{\theta(k)\ell uv},j_{\theta(k)\ell uv}) \in H_u^{v}:(j_{1},\ldots ,j_{\theta(k)\ell uv}))  \in \overline{U}^{X}_v \rbrace.\]
Let $\mathcal{U}_v=\lbrace S_{j} \rbrace_{j \in U_{v}}$, and observe that the system $\mathcal{H}_u$ having uniform fibers means that
\begin{equation} \label{ecl1}
\vert U_{v} \vert =\vert \overline{U}^{X}_{v}\vert\left(\frac{\vert H_{u} \vert }{\vert \overline{H}^{X}_{u} \vert }\right)^{v} \geq 3^{-\overline{s}^{X}_{u}}(1/m_{k})^{vu\ell \epsilon} \vert H_{u} \vert^{v}.
\end{equation} 
Thus, by equations (\ref{ecl2}), (\ref{ecl1}) and the fact that $\overline{s}^{X}_{u} \leq 1$ we get 
\begin{equation*} \label{G222}
\vert U_{v} \vert \geq 3^{-\overline{s}^{X}_{u}}(1/m_{k})^{vu\ell \epsilon}\left( \vert G_{k, \ell} \vert^{u}e^{-\epsilon u}\right)^{v} \geq 9^{-1}(m_{k} n_k)^{-vu \ell \epsilon} \vert \Gamma_{k}\vert^{vu\ell}e^{-\epsilon vu\ell \log n_k}.
\end{equation*}
The definitions $q_0:=v_0 u_0 \ell_0$ and $q:=v u \ell$ lead to the estimate of the statement.
\end{proof}
\vspace{8pt} 

\noindent We now prove the main result of this subsection, following a similar argument to that in \cite[Proof of Theorem 2.2]{FS}.

\begin{prop}\label{lowerB}
Let $\underline{t} \in A \setminus E$. Then
 \[ \dim_{B}(F_{\underline{t}})\geq \max (D_{A}, D_{B}),\]
where $D_{A}$, $D_{B}$ are the unique real numbers given by equation (\ref{DADB}).
\end{prop}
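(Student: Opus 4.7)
The plan is to leverage Lemma \ref{lemmaB}, which supplies a subsystem $\mathcal{Q}_q = \{S_{\underline{t},\lambda}\}_{\lambda \in U_q}$ satisfying the OSC with an explicit lower bound on $|U_q|$, and to count boxes covering $F_{\underline{t}}$ at the scale $n_k^{-q}$ (the smaller side length of the rectangles $\Delta_{\underline{t},\lambda}$). Throughout I will work under the assumption $t_A + t_B > D_A$, so that Lemma \ref{ngeqm} guarantees $m_k \leq n_k$ for all sufficiently large $k$; the borderline case $t_A + t_B = D_A$ forces $D = \overline{D}_X \times \overline{D}_Y$ via equality in (\ref{minus1}), so that $F_{\underline{t}}$ factors as a product of the two projected self-similar attractors, and the product inequality $\underline{\dim}_B(X \times Y) \geq \underline{\dim}_B X + \underline{\dim}_B Y$ together with Theorem \ref{H1} directly yields $\dim_B F_{\underline{t}} \geq t_A + t_B = D_A$.

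Since $\underline{t} \notin E$, Theorem \ref{H1} applied to $\{a_i x + t_i\}_{i \in \overline{D}_X}$ gives $\dim_B \pi_X F_{\underline{t}} = t_A$. For each $\lambda \in U_q$ the map $S_{\underline{t},\lambda}$ has linear part $\mathrm{diag}(m_k^{-q}, n_k^{-q})$, so $\pi_X(S_{\underline{t},\lambda}(F_{\underline{t}}))$ is a translated copy of $m_k^{-q}\,\pi_X F_{\underline{t}}$. Combining the 1-Lipschitz estimate $N_{n_k^{-q}}(S_{\underline{t},\lambda}(F_{\underline{t}})) \geq N_{n_k^{-q}}(\pi_X S_{\underline{t},\lambda}(F_{\underline{t}}))$ with the scaling identity $N_\delta(c \cdot A) = N_{\delta/c}(A)$ and Proposition \ref{propbox}(2) applied to $\pi_X F_{\underline{t}}$, one obtains
\begin{equation*}
N_{n_k^{-q}}(S_{\underline{t},\lambda}(F_{\underline{t}})) \geq C_\epsilon\,(n_k/m_k)^{q(t_A-\epsilon)}.
\end{equation*}

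I will then invoke the OSC of $\mathcal{Q}_q$ to show that every $n_k^{-q}$-grid box meets at most a uniformly bounded number $C_0$ of the pieces $S_{\underline{t},\lambda}(F_{\underline{t}})$, with $C_0$ independent of $q$; this bounded-multiplicity claim is the main obstacle, and it can be handled by a packing argument using an OSC open set of bounded geometry, or by exploiting the SSC satisfied by the horizontal projection of $\mathcal{Q}_q$ (built into the proof of Lemma \ref{lemmaB}) to reduce multiplicity control to the one-dimensional X-axis. Together with the inclusion $F_{\underline{t}} \supseteq \bigcup_{\lambda \in U_q} S_{\underline{t},\lambda}(F_{\underline{t}})$ and the lower bound on $|U_q|$ from Lemma \ref{lemmaB}, this yields
\begin{equation*}
N_{n_k^{-q}}(F_{\underline{t}}) \geq \tfrac{C_\epsilon}{9\,C_0}\, (m_k n_k)^{-q\epsilon}\,|\Gamma_k|^q\, e^{-\epsilon q \log n_k}\,(n_k/m_k)^{q(t_A-\epsilon)}.
\end{equation*}
Taking logarithms, dividing by $q\log n_k$, and letting $q\to\infty$ reduces the right-hand side to $s_k - O(\epsilon)$, since the leading term is $t_A + (\log |\Gamma_k| - t_A\log m_k)/\log n_k = s_k$. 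Letting $\epsilon\to 0$ and then $k\to\infty$, Lemma \ref{sktoDB} gives $\underline{\dim}_B F_{\underline{t}} \geq D_A = \max(D_A,D_B)$, which combined with Corollary \ref{upperB} completes the proof.
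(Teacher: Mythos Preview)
Your proposal is correct and follows essentially the same route as the paper: apply Lemma \ref{lemmaB}, cover at scale $r=n_k^{-q}$, use $\dim_B\pi_X F_{\underline t}=t_A$ together with Proposition \ref{propbox}(2) to bound $N_r(S_{\underline t,\lambda}(F_{\underline t}))$ from below, use bounded multiplicity to pass to $N_r(F_{\underline t})$, and then send $q\to\infty$, $\epsilon\to 0$, $k\to\infty$ via Lemma \ref{sktoDB}; the borderline case $t_A+t_B=D_A$ is handled identically to the paper (full grid, product structure).

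The only point worth sharpening is your bounded-multiplicity step. Abstract OSC for $\mathcal{Q}_q$ is not by itself enough to give a $q$-independent constant $C_0$, and neither is the SSC of the $X$-projection alone (distinct $\lambda\in U_q$ may share the same $X$-projection). The paper resolves this very concretely: by construction (Lemmas \ref{G21} and \ref{lemmaB}, both built on Lemma \ref{vitali}), the rectangles $\{S_{\underline t,\lambda}([0,1]^2)\}_{\lambda\in U_q}$ can intersect only along their boundaries, and each has shortest side equal to $r=n_k^{-q}$ by Lemma \ref{ngeqm}. Hence any closed $r$-square meets at most $9$ of them, so $\sum_{\lambda\in U_q} N_r(S_{\underline t,\lambda}(F_{\underline t}))\le 9\,N_r(F_{\underline t})$ with $C_0=9$. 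Plugging this in gives exactly $N_r(F_{\underline t})\ge (C_\epsilon/81)\,r^{-(s_k-\epsilon)}$, and the rest of your limit argument goes through unchanged.
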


\begin{proof} Assume $t_A+t_B>D_A\geq D_B$, fix any $\epsilon >0$ and let $\mathcal{I}_k$ be the IFS defined in (\ref{dbcarpet2}), to which we apply Lemma \ref{lemmaB} to get the system $\mathcal{Q}_{q}$  satisfying the OSC defined by a set $U_q \subseteq D^{\theta(k)q}$. Lemma \ref{sktoDB} allow us, for a given $\epsilon >0$, to find $k\in \mathbb{N}$ such that $s_{k}\geq D_{A}-\epsilon$. By definition of $s_k$ it holds
\begin{equation}\label{mio}
\begin{aligned}
\left(\vert \Gamma_{k} \vert m_k^{-t_A} n_k^{-(s_{k}-t_A)} \right)=1.
%  \left(\vert \Gamma_{k} \vert  m_k^{-(t_A-\epsilon)}n_k^{(s_k-t_A)} \right)=1.
\end{aligned}
\end{equation}
Let $r = (1/n_k)^{q}$. Observe that 
\begin{equation} \label{mio2}
e^{-\epsilon q \log n_k}=r^{-\epsilon},
\end{equation}
and consider the set
\begin{equation}\label{lastsets}
F_{0} := \bigcup_{\lambda \in U
_q} S_{\underline{t},\lambda}(F_{\underline{t}}) \subset F_{\underline{t}}. 
\end{equation}

We are going to get a lower bound for the dimension of $F_{\underline{t}}$ using the $\rho$-grid definition of $N_{\rho}(\cdot)$. By Proposition \ref{propbox}, there exists a constant $C_{\epsilon} > 0$ depending only on $\epsilon$ such that for all $\rho \in (0, 1]$ we have $N_{\rho}(\pi_{\text{\tiny X}} F_{\underline{t}}) \geq C_{\epsilon} \rho^{-(\overline{s}-\epsilon)},$ where $\overline{s} = \dim_{B}(\pi_{\text{\tiny X}} F_{\underline{t}})$. In our case, as $\underline{t} \in A\setminus E$, by Theorem \ref{H1} $\dim_{B}(\pi_{\text{\tiny X}} F_{\underline{t}})=t_A$. Thus, choosing $\rho=rm_k^{q}$ we get
\begin{equation*} \label{rm}
N_{rm_k^{q}}(\pi_{\text{\tiny X}} F_{\underline{t}}) \geq C_{\epsilon}\left( \frac{(1/m_k)^{q}}{r}\right)^{t_A-\epsilon}.
\end{equation*}\\
Note that each set $S_{\underline{t},\lambda}(F_{\underline{t}})$ in the composition of $F_{0}$ is contained in the rectangle $S_{\underline{t},\lambda}([0,1]^{2})$ which has height $r$ and base length $(1/m_k)^{q}$. Covering a set $S_{\underline{t},\lambda} (F_{\underline{t}})$ by squares of size $r$ is equivalent to covering $\pi_{\text{\tiny X}} (F_{\underline{t}})$ by intervals of length $rm_k^{q}$. It follows that
\begin{equation}\label{casi}
N_{r}(S_{\underline{t},\lambda} (F_{\underline{t}})) = N_{rm_k^{q}}( \pi_{\text{\tiny X}} (F_{\underline{t}})) \geq C_{\epsilon}\left( \dfrac{(1/m_k)^{q}}{r}\right)^{t_A-\epsilon}.
\end{equation}

Let $U$ be any closed square of side-length $r$. Since by Lemma \ref{lemmaB} $\lbrace S_{\underline{t},\lambda}([0,1]^{2})\rbrace_{\lambda \in U_q}$ is a collection of rectangles which can only intersect at the boundaries, each rectangle with shortest side of length $r$ by Lemma \ref{ngeqm}, our square $U$ can intersect no more than $9$ of the sets $\lbrace S_{\lambda}(F_{\underline{t}})\rbrace_{\lambda \in U_q}$. Thus, by (\ref{lastsets}) we have 
\[\sum_{\lambda \in U_q} N_{r}(S_{\underline{t},\lambda} (F_{\underline{t}})) \leq 9 \: N_{r} \bigg( \bigcup_{\lambda \in U_q} S_{\underline{t},\lambda}(F_{\underline{t}}) \bigg) \leq 9\: N_{r}(F_{\underline{t}}). \]

\noindent Equations (\ref{casi}), (\ref{mio2}), Lemma  \ref{lemmaB} and (\ref{mio}) successively imply the chain of inequalities
\begin{equation*}
N_{r}(F_{\underline{t}})  \geq \dfrac{1}{9} \sum_{\lambda \in U_q} N_{r}(S_{\underline{t},\lambda} (F_{\underline{t}})) \geq \dfrac{1}{9} \vert U_q \vert C_{\epsilon} \left( \dfrac{(1/m_k)^{q}}{r}\right)^{(t_A-\epsilon)}\geq  \dfrac{C_{\epsilon}}{81}\vert \Gamma_{k}\vert^{q} m_k^{-q t_A} r^{(\epsilon-t_A)} \geq \dfrac{C_{\epsilon}}{81}r^{-(s_k-\epsilon)}.
\end{equation*}

\noindent This is valid for all $q \geq q_{0}$, and hence
\[\liminf_{q \rightarrow \infty}\frac{\log N_{(1/n_k)^{q}}(F_{\underline{t}})}{-\log (1/n_k)^{q}}\geq s_{k}-\epsilon \geq D_{A} -2\epsilon \] 
 
\noindent by the choice of $ {s}_{k} $.  By equation (\ref{eqBox}) in Proposition \ref{propbox}, letting $r$ tend to zero through the sequence $(1/n_k)^{q}$ as $ q \rightarrow \infty$ is sufficient to give a lower bound on the lower box dimension of $F_{\underline{t}}$. Since $ \epsilon $ can be made arbitrarily small, this yields $\underline{\dim}_{B}F_{\underline{t}} \geq D_{A}$ as required.\\

We are left to deal with the case when $t_A+t_B =D_A$ (and therefore $t_A+t_B =D_A=D_B$). In this case both equations in (\ref{DADB}) become $\sum_{(i,j)\in D} a_{i}^{t_{A}}b_{j}^{t_{B}}=1$. Therefore, by equation (\ref{minus1}), it must occur for all $i \in \overline{D}_{X}$ that $I_i=\lbrace (i,j): j\in  \overline{D}_{Y} \rbrace$; in other words, the contractions that define our fixed Bara\'{n}ski system map the unit square to a ``full grid'' of $\vert \overline{D}_{X}\vert \times \vert\overline{D}_{Y}\vert$ rectangles. In particular, its attractor can be expressed as $F=\pi_X(F)\times \pi_Y(F)$. The same is still true when we allow overlaps induced by any translating parameters $\underline{t} \in A$, and so $F_{\underline{t}}=\pi_X(F_{\underline{t}})\times \pi_Y(F_{\underline{t}})$. Hence, using the same argument as in Lemma \ref{dimE}, and by Proposition \ref{H2}, if $\underline{t} \in A \setminus E$ we have that $$\dim_B(F_{\underline{t}})=\dim_B(\pi_X(F_{\underline{t}})\times \pi_Y(F_{\underline{t}}))\geq t_A+t_B=D_A \geq\max(D_A,D_B).$$ \end{proof}

\subsection{Calculation of the dimension}
\begin{proof}[Proof of Theorem \ref{THMbox}]
It follows directly from Propositions \ref{upperB}, \ref{lowerB} and Lemma \ref{dimE}.
\end{proof}

\begin{proof}[Proof of Corollary \ref{corBox}]
A Bedford-McMullen-type system of parameters $(\tilde{m}, \tilde{n})$ is in particular Bara\'{n}ski system, and thus Theorem \ref{THMbox} applies. Therefore, outside a set of parameters $E$, equations (\ref{DADB}) and (\ref{TB}) hold, that in this case become
\begin{equation*}
\vert D \vert (1/\tilde{m})^{t_{A}}(1/\tilde{n})^{D_{A}-t_{A}}=1, \qquad \qquad \vert D \vert (1/\tilde{n})^{t_{B}}(1/\tilde{m})^{D_{B}-t_{B}}=1, 
\end{equation*}
where $t_{A}$ and $t_{B}$ are given by
\begin{equation*}
\vert \overline{D}_{X}\vert (1/\tilde{m})^{t_{A}}=1, \qquad \qquad \vert \overline{D}_{Y}\vert (1/\tilde{n})^{t_{B}}=1, 
\end{equation*}
and therefore 

\begin{equation}\label{pruebaB}
D_A= \frac{\log \vert \overline{D}_{X}\vert }{\log \tilde{m}}+\dfrac{\log (\vert D\vert/\vert \overline{D}_{X}\vert )}{\log \tilde{n}} \qquad \qquad D_B= \frac{\log \vert \overline{D}_{Y}\vert }{\log \tilde{n}}+\dfrac{\log (\vert D\vert/\vert \overline{D}_{Y}\vert )}{\log \tilde{m}}.
\end{equation}\\
\vspace{-2pt}

Recall that in the definition of Bedford-McMullen-type carpet we have assumed $\tilde{n}>\tilde{m} >1$, and it is always true that $\vert D \vert \leq \vert \overline{D}_{X} \vert \vert\overline{D}_{Y}\vert$. Thus,
\begin{equation*}
\tilde{n}\geq\tilde{m} \Leftrightarrow \frac{\log \left( \frac{\vert \overline{D}_{X} \vert \vert\overline{D}_{Y}\vert}{\vert D \vert} \right) }{\log \tilde{m}} \geq   \frac{\log \left( \frac{\vert \overline{D}_{X} \vert \vert\overline{D}_{Y}\vert}{\vert D \vert} \right) }{\log \tilde{n}} \Leftrightarrow D_A \geq D_B
\end{equation*}
by equation (\ref{pruebaB}). We can assume that there exists at least one column  $i \in \vert \overline{D}_{X} \vert $ with more than one element in $D$ (otherwise there is no possible overlap and $\dim_B F_{\underline{t}}= \vert D \vert / \log \tilde{m}$ for any $\underline{t}$), that is, there exist $i$, $j_{1}$ and $j_{2}$ such that  $\lbrace(i,j_{1}),(i,j_{2})\rbrace \subset D $. Let us consider the hyperplane $$\mathcal{P}=\lbrace \underline{t} \in A : t_{j_{1}}=t_{j_{2}}  \rbrace.$$ 

This merges two rectangles of our original pattern, so that $\vert D \vert$ decreases in at least one without increasing the total number of columns $\vert \overline{D}_{X} \vert$. Therefore, for any $\underline{t} \in \mathcal{P}$, by Proposition \ref{upperB} and equation (\ref{pruebaB})
$$  \overline{\dim}_{B}(F_{\underline{t}})\leq \frac{\log \vert \overline{D}_{X}\vert }{\log \tilde{m}}+\dfrac{\log ((\vert D\vert-1)/\vert \overline{D}_{X}\vert )}{\log \tilde{n}} <D_A.$$
\vspace{1pt}

\noindent Thus, $\mathcal{P} \subseteq E_1$, and since $\dim_{H}\mathcal{P}=\vert\overline{D}_{X}\vert +  \vert \overline{D}_{Y}\vert-1$, we have $\dim_{H} E_1 \geq \vert\overline{D}_{X}\vert + \vert  \overline{D}_{Y}\vert-1$. Note that $E_1\subseteq E$, although the sets are not necessarily equal. But this inclusion implies that $\dim_{H}E_1\leq \dim_{H}E= \vert \overline{D}_{Y}\vert + \vert \overline{D}_{Y}\vert-1$, by Lemma \ref{dimE}$(b)$. Hence, by the sandwich lemma $\dim_{H}E_1=\vert\overline{D}_{X}\vert + \vert \overline{D}_{Y}\vert-1$. The same argument applies to the packing dimension of $E_1$, which concludes the proof.  
\end{proof}

%\setstretch{0.05}
\bibliographystyle{alpha}
\bibliography{biblio}

\begin{thebibliography}{McM84}

\bibitem[Bar07]{Baranski}
K.~Barański.
\newblock Hausdorff dimension of the limit sets of some planar geometric
  constructions.
\newblock {\em Advances in Mathematics}, 210(1):215--245, 2007.

\bibitem[Bed84]{Bedford}
T.~Bedford.
\newblock Crinkly curves, {Markov} partitions and box dimensions in
  self-similar sets.
\newblock {\em Phd dissertation}, 1984.

\bibitem[Fal88]{Falconer}
K.~Falconer.
\newblock The {Hausdorff} dimension of self-affine fractals.
\newblock {\em Mathematical Proceedings of the Cambridge Philosophical
  Society}, 103(2):339--350, 1988.

\bibitem[Fal99]{0951-7715-12-4-308}
K.~Falconer.
\newblock Generalized dimensions of measures on self-affine sets.
\newblock {\em Nonlinearity}, 12(4):877, 1999.

\bibitem[Fal14]{FALbook}
K.~Falconer.
\newblock Fractal geometry.
\newblock {\em John Wiley and Sons, 3rd {Edition}}, 2014.

\bibitem[FH09]{Feng2}
D.J. Feng and H.~Hu.
\newblock Dimension theory of iterated function systems.
\newblock {\em Communications on Pure and Applied Mathematics},
  62(11):1435--1500, 2009.

\bibitem[FJS10]{proy}
A.~Ferguson, T.~Jordan, and P.~Shmerkin.
\newblock The {Hausdorff} dimension of the projections of self-affine carpets.
\newblock {\em Fundamenta Mathematicae}, 209(3):193--213, 2010.

\bibitem[Fra12]{Boxlike}
J.~Fraser.
\newblock On the packing dimension of box-like self-affine sets in the plane.
\newblock {\em Nonlinearity}, 25(7):2075--2092, 2012.

\bibitem[FS15]{FS}
J.~Fraser and P.~Shmerkin.
\newblock On the dimensions of a family of overlapping self-affine carpets.
\newblock {\em Ergodic Theory and Dynamical Systems}, FirstView:1--19, 2015.

\bibitem[FW05]{Feng}
D.~J. Feng and Y.~Wang.
\newblock A class of self-affine sets and self-affine measures.
\newblock {\em Journal Of Fourier Analysis And Applications}, 11:107–124,
  2005.

\bibitem[HL95]{Hueter95}
I.~Hueter and S.~P. Lalley.
\newblock Falconer’s formula for the {Hausdorff} dimension of a self-affine
  set in $\mathbb{R}^{2}$.
\newblock {\em Ergodic Theory and Dynamical Systems}, 15:77--97, 1995.

\bibitem[Hoc14]{Ho}
M.~Hochman.
\newblock On self-similar sets with overlaps and inverse theorems for entropy.
\newblock {\em Annals of Mathematics}, 26(3):773--822, 2014.

\bibitem[Hoc15]{Ho2}
M.~Hochman.
\newblock On self-similar sets with overlaps and inverse theorems for entropy
  in $\mathbb{R}^{d}$.
\newblock {\em Preprint, arXiv:1503.09043v1}, 2015.

\bibitem[How96]{PSP}
J.~D. Howroyd.
\newblock On {Hausdorff} and packing dimension of product spaces.
\newblock {\em Mathematical Proceedings of the Cambridge Philosophical
  Society}, 119:715--727, 1996.

\bibitem[Hut81]{HUCH}
J.~E. Hutchinson.
\newblock Fractals and self-similarity.
\newblock {\em Indiana University Mathematics Journal}, 30:713--747, 1981.

\bibitem[JPS07]{Jordan2007}
T.~Jordan, M.~Pollicott, and K.~Simon.
\newblock Hausdorff {Dimension} for {Randomly} {Perturbed} {Self-Affine}
  {Attractors}.
\newblock {\em Communications in Mathematical Physics}, 270(2):519--544, 2007.

\bibitem[KS09]{ETS:5571624}
A.~Käenmäki and P.~Shmerkin.
\newblock Overlapping self-affine sets of {Kakeya} type.
\newblock {\em Ergodic Theory and Dynamical Systems}, 29:941--965, 2009.

\bibitem[LG92]{Lal}
S.~P. Lalley and D.~Gatzouras.
\newblock Hausdorff and box dimensions of certain self-affine fractals.
\newblock {\em Indiana University Mathematics Journal}, 41(2):533--568, 1992.

\bibitem[Mat99]{Mattila}
P.~Mattila.
\newblock Geometry of sets and {Measures} in {Euclidean} {Spaces}: {Fractals}
  and {Rectifiability}.
\newblock {\em Cambridge University Press}, 44, 1999.

\bibitem[McM84]{McMullen}
C.~McMullen.
\newblock The {Hausdorff} dimension of general {Sierpiński} carpets.
\newblock {\em Nagoya Mathematical Journal}, 96:1--9, 1984.

\bibitem[Mor46]{MORAN}
P.A.P. Moran.
\newblock Additive functions of intervals and {Hausdorff} measure.
\newblock {\em Mathematical Proceedings of the Cambridge Philosophical
  Society}, 42(1):15–23, 1946.

\bibitem[PS00]{PS}
Y.~Peres and B.~Solomyak.
\newblock Problems on {Self}-similar {Sets} and {Self-affine} {Sets}: An
  {Update}.
\newblock {\em Fractal {Geometry} and {Stochastics} II}, 46:95--106, 2000.

\bibitem[PU89]{Przytycki1989}
F.~Przytycki and M.~Urba{\'n}ski.
\newblock On the {Hausdorff} dimension of some fractal sets.
\newblock {\em Studia Mathematica}, 93(2):155--186, 1989.

\bibitem[Shm06]{Shmerkin}
P.~Shmerkin.
\newblock Overlapping self-affine sets.
\newblock {\em Indiana University Mathematics Journal}, 55:1291--1331, 2006.

\bibitem[Sol98]{PSP:37417}
B.~Solomayak.
\newblock Measure and dimension for some fractal families.
\newblock {\em Mathematical Proceedings of the Cambridge Philosophical
  Society}, 124:531--546, 1998.

\end{thebibliography}
\end{document}